\documentclass[11pt]{amsart}
\usepackage{tikz,amsthm,amsmath,amstext,amssymb,amscd,epsfig,euscript, mathrsfs, dsfont,pspicture,multicol,graphpap,graphics,graphicx,times,enumerate,subfig,sidecap,wrapfig,color}
\usepackage{amsmath}
\usepackage{amssymb}
\usepackage{pgf}

\usepackage[colorlinks,citecolor=red]{hyperref}


       %
\newcommand{\R}{{\mathbb R}}       
\newcommand{\Z}{{\mathbb Z}}       
\newcommand{\DD}{{\mathcal D}}

\newcommand{\BB}{{\mathcal B}}
\newcommand{\FF}{{\mathcal F}}
\newcommand{\HH}{{\mathcal H}}

\newcommand{\LL}{{\mathcal L}}

\newcommand{\BZ}{{\mathcal B}}

\newcommand{\RR}{{\mathcal R}}

\newcommand{\EE}{{\mathcal E}}

\newcommand{\TT}{{\mathcal T}}

\newcommand{\hm}{{\omega}}

\newcommand{\diam}{\mathop{\rm diam}}
\newcommand{\dist}{{\rm dist}}

\newcommand{\rf}[1]{{(\ref{#1})}}

\newcommand{\supp}{\operatorname{supp}}

\newcommand{\vphi}{{\varphi}}
\newcommand{\ve}{{\varepsilon}}
\newcommand{\vv}{{\vspace{2mm}}}
\newcommand{\vvv}{\vspace{4mm}}
\newcommand{\wt}[1]{{\widetilde{#1}}}
\newcommand{\wh}[1]{{\widehat{#1}}}

\newcommand{\HD}{{\mathsf{HD}}}
\newcommand{\LD}{{\mathsf{LD}}}
\newcommand{\sss}{{\mathsf{Stop}}}
\newcommand{\ttt}{{\mathsf{Top}}}
\newcommand{\tree}{{\mathsf{Tree}}}
\newcommand{\reg}{{\mathsf{Reg}}}
\newcommand{\nex}{{\mathsf{Next}}}
\newcommand{\ch}{{\mathsf{ch}}}

\def\Xint#1{\mathchoice
{\XXint\displaystyle\textstyle{#1}}%
{\XXint\textstyle\scriptstyle{#1}}%
{\XXint\scriptstyle\scriptscriptstyle{#1}}%
{\XXint\scriptscriptstyle\scriptscriptstyle{#1}}%
\!\int}
\def\XXint#1#2#3{{\setbox0=\hbox{$#1{#2#3}{\int}$ }
\vcenter{\hbox{$#2#3$ }}\kern-.58\wd0}}

\def\avint{\Xint-}

\textwidth14cm
\textheight21.5cm
\evensidemargin1.1cm
\oddsidemargin1.1cm

\addtolength{\headheight}{5.2pt}    

\newtheorem{theorem}{Theorem}[section]
\newtheorem{lemma}[theorem]{Lemma}

\newtheorem{coro}[theorem]{Corollary}

\newtheorem{propo}[theorem]{Proposition}

\newtheorem*{lemma*}{Lemma}
\newtheorem*{theorem*}{Theorem}

\theoremstyle{definition}

\theoremstyle{remark}
\newtheorem{rem}[theorem]{\bf Remark}

\numberwithin{equation}{section}

\newcommand{\RRem}{\begin{rem}}
\newcommand{\erem}{\end{rem}}

\def\d{\partial}

\makeatletter
\def\@tocline#1#2#3#4#5#6#7{\relax
  \ifnum #1>\c@tocdepth 
  \else
    \par \addpenalty\@secpenalty\addvspace{#2}%
    \begingroup \hyphenpenalty\@M
    \@ifempty{#4}{%
      \@tempdima\csname r@tocindent\number#1\endcsname\relax
    }{%
      \@tempdima#4\relax
    }%
    \parindent\z@ \leftskip#3\relax \advance\leftskip\@tempdima\relax
    \rightskip\@pnumwidth plus4em \parfillskip-\@pnumwidth
    #5\leavevmode\hskip-\@tempdima
      \ifcase #1
       \or\or \hskip 1em \or \hskip 2em \else \hskip 3em \fi%
      #6\nobreak\relax
    \dotfill\hbox to\@pnumwidth{\@tocpagenum{#7}}\par
    \nobreak
    \endgroup
  \fi}
\makeatother

\def\Cap{\mathop\mathrm{Cap}}

\def\cM{{\mathcal{M}}}



\begin{document}

\title[Uniform rectifiability and bounded harmonic functions]{Uniform rectifiability from Carleson measure estimates and $\boldsymbol\ve$-approximability of bounded harmonic functions}



\newcommand{\jonas}[1]{\marginpar{\color{magenta} \scriptsize \textbf{Jonas:} #1}}

\author[Garnett]{John Garnett}

\address{John Garnett\\
Department of Mathematics, 520 Portola Plaza \\University of California,
 Los Angeles, Los Angeles, California 90095-1555.
}
\email{jbg@mat.ucla.edu}

\author[Mourgoglou]{Mihalis Mourgoglou}

\address{Mihalis Mourgoglou
\\
Departament de Matem\`atiques\\
 Universitat Aut\`onoma de Barcelona
\\
Edifici C Facultat de Ci\`encies
\\
08193 Bellaterra (Barcelona), Catalonia
}

\curraddr{\sc{BCAM - Basque Center for Applied Mathematics}\\
\sc{Mazarredo, 14 E48009 Bilbao, Basque Country--Spain.}
}
\email{mourgoglou@mat.uab.cat}

\author[Tolsa]{Xavier Tolsa}
\address{Xavier Tolsa
\\
ICREA, Passeig Lluís Companys 23 08010 Barcelona, Catalonia, and\\
Departament de Matem\`atiques and BGSMath
\\
Universitat Aut\`onoma de Barcelona
\\
Edifici C Facultat de Ci\`encies
\\
08193 Bellaterra (Barcelona), Catalonia
}
\email{xtolsa@mat.uab.cat}

\subjclass[2010]{31A15, 28A75, 28A78}
\thanks{J.G. was supported by NSF Grant DMS 1217239. M.M was supported  by the ERC grant 320501 of the European Research Council (FP7/2007-2013) and also by the Basque Government through the BERC 2014-2017 program and by Spanish Ministry of Economy and Competitiveness MINECO: BCAM Severo Ochoa excellence accreditation SEV-2013-0323. X.T. was  supported by the ERC grant 320501 of t7/2007-2013) and also by 2014-SGR-75 (Catalonia), MTM2013-44304-P (Spain), and the Marie Curie ITN MAnET (FP7-607647).
}

\begin{abstract}
Let $\Omega\subset\R^{n+1}$, $n\geq1$, be a corkscrew domain  with Ahlfors-David regular boundary. In this paper we prove that $\partial\Omega$ is
uniformly $n$-rectifiable if every bounded harmonic function on $\Omega$ is 
$\varepsilon$-approximable or if every bounded harmonic function on $\Omega$ 
 satisfies a suitable square-function Carleson measure estimate.  
In particular, this applies to the case when $\Omega=\R^{n+1}\setminus E$ and $E$ is  Ahlfors-David regular.
Our results solve a conjecture posed by Hofmann, Martell, and Mayboroda in a recent work where they 
proved the converse statements.
Here we also obtain two additional criteria for uniform rectifiability. One is given in terms of the so-called ``$S<N$'' estimates, and another in terms of a suitable corona decomposition involving harmonic measure. 
\end{abstract}

\maketitle



\section{Introduction}

In this paper we  characterize the uniform $n$-rectifiability of the boundary of a domain $\Omega \subset \R^{n+1}$, $n\geq1$, in terms of a square function
Carleson measure estimate, or of an approximation property, for the bounded harmonic functions on $\Omega.$
Our results solve an open problem posed by Hofmann, Martell and Mayboroda in \cite{HMM2}.

We introduce some definitions and notations.
A set $E\subset \R^d$ is called $n$-{\textit {rectifiable}} if there are Lipschitz maps
$f_i:\R^n\to\R^d$, $i=1,2,\ldots$, such that 
\begin{equation}\label{eq001}
\HH^n\biggl(E\setminus\bigcup_i f_i(\R^n)\biggr) = 0,
\end{equation}
where $\HH^n$ stands for the $n$-dimensional Hausdorff measure. 
A set $E\subset\R^{d}$ is called $n$-AD-{\textit {regular}} (or just AD-regular or Ahlfors-David regular) if there exists some
constant $c>0$ such that
$$c_0^{-1}r^n\leq \HH^n(B(x,r)\cap E)\leq c_0\,r^n\quad \mbox{ for all $x\in
E$ and $0<r\leq \diam(E)$.}$$
 
The set $E\subset\R^{d}$ is  {\textit {uniformly}}  $n$-{\textit {rectifiable}} if it is 
$n$-AD-regular and
there exist constants $\theta, M >0$ such that for all $x \in E$ and all $0<r\leq \diam(E)$ 
there is a Lipschitz mapping $g$ from the ball $B_n(0,r)$ in $\R^{n}$ to $\R^d$ with $\text{Lip}(g) \leq M$ such that
$$
\HH^n (E\cap B(x,r)\cap g(B_{n}(0,r)))\geq \theta r^{n}.$$

The analogous notions for  measures are the following. 
A Radon measure $\mu$ on $\R^d$ is $n$-{\textit {rectifiable}} if it vanishes outside  an $n$-rectifiable
set $E\subset\R^d$ and if moreover $\mu$ is absolutely continuous with respect to $\HH^n|_E$.
On the other hand, $\mu$ is called $n$-AD-regular if it is of the form $\mu=g\,\HH^n|_E$, where 
$E$ is $n$-AD-regular and $g:E\to (0,+\infty)$ satisfies $g(x)\approx1 $ for all $x\in E$, with the implicit constant independent of $x$.
If, moreover, $E$ is uniformly $n$-{\textit {rectifiable}}, then $\mu$ is called 
{\textit{uniformly}} $n$-{\textit{rectifiable}}.

The notion of uniform rectifiability should be considered a quantitative version of rectifiability. It 
was introduced in their pioneering works \cite{DS1}, \cite{DS2} of David and Semmes, who were seeking 
a good geometric framework under which all singular integrals with odd and sufficiently smooth kernel are bounded in $L^2$. 

An open set $\Omega\subset\R^{n+1}$ is called a {\textit {corkscrew domain}} if for every ball $B(x,r)$ with
 $x\in\partial\Omega$ and $0<r\leq\diam(\Omega)$ there exists another ball $B(x',r')\subset \Omega\cap B(x,r)$
 with radius $r'\approx r$, with the implicit constant independent of $x$ and $r$. 
Let us remark that we do not ask $\Omega$ to be connected. For example, if $E\subset\R^{n+1}$
is a closed $n$-AD-regular set, then it follows easily that $\R^{n+1}\setminus E$ is a corkscrew domain.

Let  $\Omega \subset \R^{n+1}$ be open, and let
$u$ be  a 
bounded harmonic function on $\Omega$.  For $\ve > 0$ we say that
$u$ is 
  $\ve$-\textit{approximable} 
if there is $\varphi \in W^{1,1}_{\rm {loc}}(\Omega)$
and $C>0$ such that 
\begin{equation}
\label{epapp}
\|u - \varphi\|_{L^{\infty}(\Omega)} < \varepsilon
\end{equation}
\and for all $x \in \partial\Omega$ and all $r > 0$
\begin{equation}
\label{epCar}
\frac{1}{r^n} \int_{B(x,r)} |\nabla \varphi(y)| \,dy \leq C,
\end{equation}
where $dy$ denotes the Lebesgue measure in $\R^{n+1}.$
It is clear by a normal family argument that every bounded harmonic function on $\Omega$ is 
$\varepsilon$-approximable for all $\varepsilon >0$ if and only if \eqref{epapp} and
 \eqref{epCar} hold for all harmonic $u$ with 
$||u||_{L^{\infty}(\Omega)} \leq 1$ with constant $C = C_{\varepsilon}$ depending on $\varepsilon$ 
but not on $u$. The notion of $\ve$-approximability was introduced by Varopoulos in \cite{Var} in connection
with 
 corona problems.   See \cite[Chapter VIII]{Garnett} for a proof on the upper half plane and \cite{Dah},
 for the case of Lipschitz domains.  Also see \cite{Garnett}, \cite{HMM2}, \cite{KKiPT}, \cite {KKoPT}, and \cite{Pipher} for further results 
 and  applications, including some to elliptic operators.  

Our main result is the following:
\begin{theorem} \label{teo1}
Let $\Omega\subset\R^{n+1}$, $n\geq1$, be a corkscrew domain with $n$-AD-regular boundary.  Then the following are equivalent:
\begin{itemize}
\item[(a)] $\partial\Omega$ is uniformly $n$-rectifiable.
\vv
\item[(b)] Every bounded harmonic function on $\Omega$ is $\varepsilon$-approximable for all $\varepsilon  > 0.$ 
\vv
\item[(c)] There is $C>0$ such that if $u$ is a bounded harmonic function on $\Omega$ and $B$ is a ball centered at $\partial\Omega$,
\begin{equation}\label{eqhip1}
\int_B |\nabla u(x)|^2\,\dist(x,\partial\Omega)\,dx\leq C\,\|u\|^2_{L^\infty(\Omega)}\,r(B)^n.
\end{equation}
\end{itemize}

\end{theorem}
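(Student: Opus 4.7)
The plan is to cycle through the implications: (a)~$\Rightarrow$~(b) and (a)~$\Rightarrow$~(c) are the content of \cite{HMM2}, so the new work consists in establishing (b)~$\Rightarrow$~(c) and (c)~$\Rightarrow$~(a). The ultimate target of (c)~$\Rightarrow$~(a) will be a standard geometric characterization of uniform rectifiability, e.g.\ the existence of a David--Semmes corona decomposition by Lipschitz graphs, which should follow from a Carleson packing condition on a stopping-time family adapted to harmonic measure.

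For (b)~$\Rightarrow$~(c), fix $u$ bounded harmonic with $\|u\|_{L^\infty(\Omega)}\le1$, a ball $B$ centered at $\partial\Omega$, and a small $\varepsilon$; let $\varphi$ be an $\varepsilon$-approximator. I would run a sawtooth stopping-time argument in a Whitney decomposition of $\Omega$: halt at maximal cubes where either a nontangential average of $|u-\varphi|$ exceeds $C\varepsilon$, or a local Carleson density of $|\nabla\varphi|$ crosses a threshold. The first stopping rule is defeated by $\|u-\varphi\|_{L^\infty}<\varepsilon$, while the second has Carleson packing by \eqref{epCar}. On the complement one performs a Rellich-type integration by parts, rewriting $|\nabla u|^2\,\dist(\cdot,\partial\Omega)$ as a divergence plus terms pairing $\nabla u$ with $\nabla\varphi$; Cauchy--Schwarz absorbs the cross term into the left-hand side and the boundary terms are controlled by $\|u\|_\infty$ and the Carleson bound on $|\nabla\varphi|$. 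Summing over stopped trees gives the desired estimate. The mechanism is in the spirit of Dahlberg's original proof on Lipschitz domains, adapted to the sawtooth geometry of a corkscrew domain.

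For (c)~$\Rightarrow$~(a), the heart of the paper, the strategy is to turn the Carleson estimate on bounded harmonic functions into information about non-flat scales of $\partial\Omega$. I would first extract from (c), by a good-$\lambda$/John--Nirenberg argument combined with Fubini, a local ``$S<N$'' inequality of the form $\|Su\|_{L^2(\sigma\rest Q)}\lesssim\|Nu\|_{L^2(\sigma\rest Q)}$, where $Q$ is a boundary cube, $S$ is the conical square function, and $N$ the nontangential maximal function. Next, build a corona decomposition of $\partial\Omega$ in the David--Christ lattice by a stopping time that halts whenever the harmonic measure $\omega^{X_R}$ from a corkscrew point in the top cube $R$ ceases to resemble a fixed multiple of $\sigma$ on the relevant portion of $\partial\Omega$. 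The Carleson packing of the tops is to be verified by testing (c) on harmonic extensions of boundary data carefully designed to detect these stopping regions, so that $\int|\nabla u|^2\,\dist$ picks up a definite amount of mass for each stopping cube. Finally, a corona decomposition where harmonic measure is quantitatively comparable to surface measure on each tree, together with corkscrew accessibility, should yield uniform rectifiability via existing results linking weak-$A_\infty$ behavior of harmonic measure to chord-arc structure.

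The hard part is the middle step of (c)~$\Rightarrow$~(a): producing bounded harmonic test functions whose weighted gradient-squared mass concentrates quantitatively on the predicted stopping regions. In the upper half-space this is essentially the classical construction of Varopoulos via tent atoms, but on a general corkscrew domain there are no explicit reproducing formulas, so the test functions have to be built indirectly from Green's-function/harmonic-measure comparisons, Caccioppoli estimates at many scales, and the maximum principle. Making the stopping-time machinery compatible with these non-explicit test functions, and quantitatively converting the resulting packing into a geometric corona decomposition, is where the real novelty will lie.
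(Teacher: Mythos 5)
Your sketch gets the opening move right: both the paper and your proposal construct a corona decomposition in the Christ--David lattice whose stopping time is governed by the ratio $\omega^{p_R}(Q)/\mu(Q)$, and both verify the Carleson packing of the tops by feeding the hypothesis (b) or (c) to a family of harmonic test functions localized near the stopping cubes. Your stopping condition (``halt when $\omega^{X_R}$ ceases to resemble a fixed multiple of $\sigma$'') is essentially the paper's high-density/low-density dichotomy, and you correctly identify the construction of test functions with a definite amount of $\int |\nabla u|^2\,\dist(\cdot,\partial\Omega)$ on the stopping regions as the technical heart. Concretely, the paper's Lemma~\ref{lemapprox} takes $u_Q(x)=\int_{E_Q} f_Q\,d\omega^x$ with $f_Q$ an explicit cutoff of a Riesz-type kernel $g_Q$ at scale $\approx\ve\,\ell(Q)$; since $g_Q$ is harmonic and $u_Q$ agrees with it up to a small error on a Whitney ball $V_Q$ (via Harnack plus the fact that $\omega^{p_Q}(\partial\Omega\setminus E_Q)\lesssim\ve^\alpha$), one extracts a directional derivative $\nabla u_Q\cdot e_Q\gtrsim r(V_Q)^{-1}$ on $V_Q$. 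This is the mechanism you gestured at but did not supply. Two smaller organizational differences: the paper does not pass through (b)~$\Rightarrow$~(c); it proves (b)~$\Rightarrow$~(a) and (c)~$\Rightarrow$~(a) by running the same Key Lemma with a short alternative argument (a Poincar\'e inequality on the $\ve$-approximator) in case (b). It also does not derive an ``$S<N$'' estimate along the way; on the contrary, Corollary~\ref{coro1} is deduced from Theorem~\ref{teo1} afterwards, not used in the proof.

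The genuine gap is your closing step: ``a corona decomposition where harmonic measure is quantitatively comparable to surface measure on each tree \ldots\ should yield uniform rectifiability via existing results linking weak-$A_\infty$ behavior of harmonic measure to chord-arc structure.'' This does not work in the generality of Theorem~\ref{teo1}. The results you are invoking (Hofmann--Martell--Uriarte-Tuero type theorems, or the martingale argument of Section~\ref{sec5} of the paper) require the Harnack chain condition; they convert comparability of $\omega^{p_R}$ and $\mu$ on a tree into an $A_\infty$ statement for $\omega^p$ precisely because harmonic measure is doubling and satisfies the change-of-pole/martingale property in uniform domains. For a mere corkscrew domain---already $\Omega=\R^{n+1}\setminus E$ with $E$ AD-regular---none of this is available: harmonic measure need not be doubling, and $\omega^{p}$ and $\mu$ can even be mutually singular on large portions of $\partial\Omega$. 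The paper explicitly flags this (``To implement this argument is a non-trivial task as, for example, both measures may be mutually singular'') and routes around it in Section~\ref{sec6} by a completely different closing argument: from the corona decomposition and the formula $\RR\omega^{p_R}=K(\cdot-p_R)-c_n\nabla G(\cdot,p_R)$ one controls the Riesz transform of $\omega^{p_R}$ on each tree, then transfers this to $\mu$ via the Nazarov--Treil--Volberg suppressed-kernel $Tb$ machinery (Lemmas~\ref{lemsigma1}--\ref{lemsigma4}), obtaining $L^2(\mu)$-boundedness of the Riesz transform, and finally invokes the Nazarov--Tolsa--Volberg theorem \cite{NToV} to conclude uniform rectifiability. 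This Riesz-transform endgame is the bulk of the technical work and is entirely absent from your proposal; without it (or some substitute that does not require doubling of harmonic measure) your argument proves the theorem only for uniform domains, which is the easier case handled separately in Section~\ref{sec5} of the paper.
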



The implications (a) $\Rightarrow$ (b) and (a) $\Rightarrow$ (c) have already been  proved by Hofmann, Martell, and
Mayboroda in \cite{HMM2} for $n \geq 2$, but a careful reading of their proof shows same implications hold for $n =1$ 
with small modifications.  
In the current paper we will only prove that (b) $\Rightarrow$ (a) and that (c) $\Rightarrow$ (a),
but in a  
slightly stronger formulation  because   we only  assume (b) or (c) holds for bounded functions continuous on $\overline \Omega$ and harmonic
on $\Omega.$

As a corollary of the preceding theorem we deduce another characterization of uniform rectifiability in terms
of a square function - nontangential maximal function estimate (of the type ``$S<N$") in the case $n\geq2$. To state this result we need some additional notation. Given $x\in\partial\Omega$, we define the {\textit {cone}}
$$\Gamma(x) = \{y\in\Omega:|x-y| < 2\,\dist(y,\partial\Omega)\}$$
and for a continuous function $u$ in $\Omega$, we define the {\textit {non-tangential maximal function}}
$$N_* u(x) = \sup_{y\in\Gamma(x)}|u(y)|.$$
For $u\in W^{1,2}_{loc}(\Omega)$ we also define the {\textit {square function}}
$$Su(x) = \left(\int_{y\in\Gamma(x)}|\nabla u(y)|^2\,\dist(y,\partial\Omega)^{1-n}\,dy\right)^{1/2}.$$

Then we have:

\begin{coro}\label{coro1}
Let $\Omega\subset\R^{n+1}$, $n\geq2$, be a corkscrew domain with $n$-AD-regular boundary. 
Denote by $\mu$ the surface measure on $\partial\Omega$.
Suppose that for some $p \in [2, \infty)$ there exists some constant $C_p>0$ such that for every function $u \in C_0(\overline \Omega)$ harmonic in $\Omega$,
\begin{equation}\label{eqhld2}
\|Su\|_{L^p(\mu)} 
\leq C_p\,\|N_*u\|_{L^p(\mu)}\quad 
\end{equation}
Then $\partial\Omega$ is uniformly rectifiable.
\end{coro}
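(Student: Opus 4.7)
The plan is to derive condition (c) of Theorem \ref{teo1} from the $S<N$ hypothesis and then invoke the implication (c)$\Rightarrow$(a) of that theorem, which according to the authors holds in the stronger form requiring (c) only for $u$ continuous on $\overline\Omega$ and harmonic in $\Omega$. So fix such a $u$ with $\|u\|_{L^\infty(\Omega)}\le 1$ and a ball $B=B(x_0,r_0)$ centered on $\partial\Omega$; the target is
$$\int_{B\cap\Omega}|\nabla u(y)|^2\,\dist(y,\partial\Omega)\,dy \le C\,r_0^n.$$

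To exploit the global hypothesis I would localize via a Perron--Wiener--Brelot construction: for a large constant $K$ chosen below, pick $\chi\in C_c(\partial\Omega)$ with $\chi\equiv 1$ on $KB\cap\partial\Omega$ and $\supp\chi\subset 2KB$, and let $v$ be the harmonic extension of $\chi u|_{\partial\Omega}$. In a corkscrew AD-regular domain $\partial\Omega$ satisfies the capacity density condition and is therefore Wiener regular, so $v\in C_0(\overline\Omega)$ and $\|v\|_\infty\le 1$ by the maximum principle. Writing $u=v+w$,
$$\int_B|\nabla u|^2\,\dist\,dy \le 2\int_B|\nabla v|^2\,\dist\,dy + 2\int_B|\nabla w|^2\,\dist\,dy,$$
and it suffices to estimate each piece separately.

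For the main piece, Fubini combined with the AD-regular identity $\mu(\{x\in\partial\Omega : y\in\Gamma(x)\})\approx \dist(y,\partial\Omega)^n$ gives, for some absolute $\kappa\ge 1$,
$$\int_B|\nabla v|^2\,\dist\,dy \lesssim \int_{\kappa B\cap\partial\Omega}(Sv)(x)^2\,d\mu(x).$$
Since $v\in C_0(\overline\Omega)$ is harmonic, hypothesis \eqref{eqhld2} applies, and H\"older's inequality on $\kappa B\cap\partial\Omega$ (of $\mu$-measure $\lesssim r_0^n$) yields
$$\int_{\kappa B}(Sv)^2\,d\mu \le \mu(\kappa B)^{1-2/p}\,C_p^2\,\|N_*v\|_{L^p(\mu)}^2.$$
The $L^p$ norm of $N_*v$ is controlled by splitting $\partial\Omega$ into $4KB$ (where $N_*v\le 1$ and $\mu(4KB)\lesssim (Kr_0)^n$) and its complement (where the boundary data of $v$ vanishes, and the non-degeneracy of harmonic measure in corkscrew AD-regular domains forces a polynomial decay $N_*v(x)\lesssim (Kr_0/|x-x_0|)^\alpha$); combining, $\|N_*v\|_{L^p(\mu)}^2\lesssim (Kr_0)^{2n/p}$, so the main piece is $\lesssim K^{2n/p}r_0^n$.

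For the error piece, $w$ is bounded harmonic with boundary data vanishing on $KB\cap\partial\Omega$. Boundary H\"older continuity, again a consequence of the capacity density condition, gives $|w(y)|\lesssim (\dist(y,\partial\Omega)/Kr_0)^\beta$ on the Carleson region over $B$; combined with Caccioppoli's inequality this produces $|\nabla w(y)|^2\dist(y,\partial\Omega)\lesssim \dist(y,\partial\Omega)^{2\beta-1}/(Kr_0)^{2\beta}$, whose integral over $B\cap\Omega$ is $\lesssim K^{-2\beta}r_0^n$. For any fixed $K$, summing the two contributions gives the Carleson measure estimate with a constant $C(K,p)$ independent of $u$ and $B$, which is condition (c).

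The hardest step will be the tail estimate for $\|N_*v\|_{L^p(\mu)}$: when $p\alpha\le n$ the naive integration of the polynomial decay of $N_*v$ against $d\mu$ diverges, and one needs a sharper argument --- a dyadic annular decomposition of $\partial\Omega\setminus 4KB$ with iterative application of the hypothesis on successive scales, or a refined harmonic-measure decay estimate exploiting that $\chi u$ is supported in $2KB$ --- to recover an $L^p$ bound of the right order. The remaining ingredients (Wiener regularity of $\partial\Omega$, non-degeneracy of harmonic measure, and boundary H\"older continuity) are standard in corkscrew AD-regular domains.
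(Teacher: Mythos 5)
Your overall strategy is the same as the paper's: split $u$ near a ball $B$ into the harmonic extension of the truncated boundary data plus a remainder, handle the remainder by boundary H\"older continuity/Caccioppoli, estimate the local piece by Fubini, H\"older and the hypothesis \eqref{eqhld2}, and thereby verify condition (c) of Theorem \ref{teo1} for functions in $C_0(\overline\Omega)$, which suffices by Remark \ref{rem***}. However, the step you yourself flag as unresolved --- the $L^p(\mu)$ bound for $N_*v$ --- is precisely the crux, and as it stands your argument does not close. The decay you invoke, $N_*v(x)\lesssim (Kr_0/|x-x_0|)^{\alpha}$, comes only from boundary H\"older continuity and is genuinely insufficient: the CDC exponent $\alpha$ can be arbitrarily small, so for any fixed $p\in[2,\infty)$ one can have $p\alpha\le n$ and the tail integral $\int_{\partial\Omega\setminus 4KB}(Kr_0/|x-x_0|)^{p\alpha}\,d\mu$ diverges. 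No choice of the auxiliary constant $K$ repairs this, and an ``iterative application of the hypothesis on successive scales'' is not set up anywhere in your sketch.

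The missing idea in the paper is a sharper decay estimate for the local piece: since $u_1$ (your $v$) is bounded, harmonic, vanishes at infinity, and vanishes continuously on $\partial\Omega$ outside a fixed dilate of $B$, a Kelvin transform argument (the auxiliary lemma proving \eqref{eqdjl125}) gives
$$N_*u_1(x)\ \lesssim\ \Bigl(\frac{r(B)}{r(B)+\dist(x,B)}\Bigr)^{n-1+\alpha}\,\|u_1\|_{L^\infty(\Omega)},$$
i.e.\ the decay exponent is $n-1+\alpha$, not $\alpha$: the extra $n-1$ comes from the decay at infinity of a bounded harmonic function with compactly supported boundary data, seen through the inversion $x\mapsto x/|x|^2$ and the H\"older continuity of the transformed function at $0$ (here the AD-regularity of the inverted boundary and the CDC are used). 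Since $(n-1+\alpha)p>n$ whenever $n\ge2$ and $p\ge2$, this yields $\|N_*u_1\|_{L^p(\mu)}^p\lesssim\|u\|_{L^\infty(\Omega)}^p\,\mu(B)$, which is exactly what is needed to conclude $\int_B|\nabla u_1|^2\dist(\cdot,\partial\Omega)\,dx\lesssim\|u\|_{L^\infty(\Omega)}^2\,r(B)^n$. This improved exponent is also the reason the corollary is stated for $n\ge2$: in the plane the same scheme only works under the restriction $p>1/\alpha$ (Corollary \ref{coro2}). So you should replace the naive $\alpha$-decay by this Kelvin-transform estimate; with that ingredient your outline matches the paper's proof. (A minor further point: your remainder estimate mixes a pointwise H\"older bound with Caccioppoli; to make it precise, either run Caccioppoli directly on $u_2$ extended by zero, as the paper does, or sum interior gradient estimates over a Whitney decomposition --- no gain in $K$ is needed, since condition (c) only requires a bound, not smallness.)
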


In Hofmann and Le \cite[eq. (4.12)]{Hofmann-Le} the estimate
\rf{eqhld2} (at least for $n \geq 2$) is asserted for corkscrew domains with uniformly $n$-rectifiable boundaries and attributed to a forthcoming paper by Hofmann, Martell and Mayboroda.  From that paper and Corollary 1.2 it follows 
 that \rf{eqhld2} also characterizes uniform
rectifiability for corkscrew domain with $n$-AD-regular boundary for $n \geq 2.$ 

\vv

We will prove Theorem \ref{teo1} by using the connection between harmonic measure  the Riesz transforms and then applying the result from \cite{NToV} that the $L^2(\mu)$ boundedness of the vector or Riesz transforms implies the
uniform $n$-rectfiability of $\mu$. The connection between harmonic measure and uniform
$n$-rectifiability has been a subject of intensive research in the last years. See for example, 
\cite{DJ}, \cite{HM1}, \cite{HMU}, \cite{HMM1}, \cite{AHMNT}, \cite{BH}, and \cite{HM2}. Among these we would like to highlight \cite{HM1} and \cite{HMU}, from which it follows that for a bounded uniform domain $\Omega\subset\R^{n+1}$ (so that $\partial\Omega$ is $n$-AD-regular), the harmonic measure $\omega^p$ in $\Omega$ is an
$A_\infty$ weight with respect to the surface measure if and only if $\partial\Omega$ is uniformly $n$-rectifiable. On the other hand, the connection between harmonic measure and the Riesz transforms,
in combination with the rectifiability criteria from \cite{NToV} and \cite{NToV-pubmat},
has been successfully exploited
in other recent works such as \cite{AHM3TV}, \cite{MTo}, and \cite{AMT}.

To show that the Riesz transform vector is bounded in $L^2(\mu)$ we will use a corona type decomposition. Unlike the usual corona decompositions of David and Semmes in \cite{DS2}, which are of geometric nature, 
the one we will need is based on the comparison between surface measure and harmonic measure. We will derive the usual packing condition for such decomposition from the assumptions (b) or (c) in Theorem \ref{teo1} with a suitable test function $u$ whose construction involves the harmonic measure.
Then, by a comparison argument between  surface measure and harmonic measure we will prove the
boundedness of the Riesz transform ``at the scale of each tree'' of the corona decomposition. To implement this argument
is a non-trivial task as, for example, both measures may be mutually singular. 
To overcome these technical difficulties we  will use
the suppressed Riesz kernels introduced by Nazarov, Treil and Volberg in \cite{NTV} and the sophisticated $Tb$ theorems they proved for such operators.

\vv
Finally, we remark that the corona decomposition we use in the proof of Theorem \ref{teo1} provides a new
characterization, in terms of harmonic measure,  of  uniform rectifiability for boundaries of corkscrew domains.  This complements, in some sense, another  characterization in terms of big pieces of NTA domains,
recently obtained by Bortz and Hofmann  \cite{BH} and Martell and
 Hofmann \cite{HM2}.
This corona decomposition characterization  is described in  Propositions \ref{propo11} and \ref{propo12}, but an equivalent and somewhat less technical form of it is provided by the following theorem.

\begin{theorem}\label{teo2}
Let $\Omega\subset\R^{n+1}$ be a corkscrew domain with $n$-AD-regular boundary. Denote by
$\mu$ the surface measure on $\partial\Omega$, and let $\DD_\mu$ be a dyadic lattice of cubes associated to $\mu$ as in Subsection 
\ref{subsec21}.
Then $\partial\Omega$ is uniformly $n$-rectifiable if and only if there exists a family $\FF\subset\DD_\mu$ 
satisfying the following properties:
\begin{itemize}
\item[(a)] Every cube $Q\in\DD_\mu$ is contained in some cube $R\in\FF$.

\item[(b)] 
The family $\FF$ fulfills the packing condition
$$\sum_{R\subset S: R\in\FF}\mu(R)\leq C\,\mu(S)\quad \mbox{for all $S\in\DD_\mu$}.$$

\item[(c)] For each 
$R\in\FF$ there exists a corkscrew point $p_R\in\Omega$ with 
$$c^{-1}\ell(R)\leq\dist(p_R,R)\leq \dist(p_R,\partial\Omega)\leq c\,\ell(R)$$
such that, if $R$ is the smallest cube from $\FF$ containing some cube $Q\in \DD_\mu$, then
$$
\omega^{p_R}(5Q)\approx \frac{\mu(Q)}{\mu(R)},$$
with the implicit constant uniform on $Q$ and $R$.
\end{itemize}
\end{theorem}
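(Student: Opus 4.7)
The plan is to prove the two implications separately, both aligned with the general strategy sketched after Theorem~\ref{teo1}. I regard the forward direction as a consequence of already-known results on UR sets, and devote the main effort to the converse, which is essentially the corona-decomposition reformulation of the step (c)$\Rightarrow$(a) of Theorem~\ref{teo1}.

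For the forward direction (UR $\Rightarrow$ existence of $\FF$), I would build $\FF$ by a stopping-time procedure on $\DD_\mu$. For each candidate cube $R$, choose a corkscrew point $p_R$ with $\dist(p_R,R)\approx\dist(p_R,\partial\Omega)\approx\ell(R)$; then, descending from the parent of $R$, mark $R\in\FF$ as the maximal cube for which the comparability $\omega^{p_R}(5Q)\approx\mu(Q)/\mu(R)$ first fails at some descendant $Q$, and iterate on these $Q$'s. Conditions (a) and (c) then hold by construction. The packing condition (b) is what genuinely uses UR: by the already-established implication (a)$\Rightarrow$(c) of Theorem~\ref{teo1} (from \cite{HMM2}), bounded harmonic functions on $\Omega$ satisfy Carleson square-function estimates, and testing these estimates against carefully chosen bounded harmonic functions (built from solutions with boundary data modeled on characteristic functions of cubes) converts the failure of the comparability into a Carleson-type sum, from which (b) follows.

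For the converse direction (existence of $\FF$ $\Rightarrow$ UR), I aim to deduce the $L^2(\mu)$-boundedness of the Riesz transform vector $\RR$ and then conclude via \cite{NToV}. For each $R\in\FF$, condition (c) says that $\omega^{p_R}$ and $\mu$, both restricted to $R$ and normalized to have total mass of order $1$, are comparable at every scale of the tree rooted at $R$. Combining this with the Green-function representation formula, which relates $\nabla G^{p_R}$ to the Riesz transform of $\omega^{p_R}$ (modulo the Newtonian potential of the pole), one can construct an accretive system of test functions $\{b_R\}_{R\in\FF}$ suitable for the $Tb$ theorem of Nazarov-Treil-Volberg \cite{NTV} applied to the suppressed Riesz operator adapted to the scale of $R$. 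The $Tb$ theorem then yields uniform $L^2$ estimates for the truncated Riesz transforms at the scale of $R$, and the packing condition (b) permits a standard corona-patching argument to upgrade these to the global $L^2(\mu)$-boundedness of $\RR$.

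The main obstacle will be implementing the $Tb$-argument inside each tree. Although (c) gives comparability of $\omega^{p_R}$ and $\mu$ at the level of cubes in $\tree(R)$, the two measures can still be mutually singular, so pointwise comparison of densities is impossible. Handling this is precisely where the suppressed Riesz kernels of \cite{NTV} enter, absorbing the concentrations of $\omega^{p_R}$ where needed, together with careful Green-function decay estimates relying only on the $n$-AD-regularity of $\partial\Omega$ and the corkscrew condition (no NTA-type connectivity is available). A secondary technical point is to produce the $b_R$ with the correct accretivity and $L^\infty$ bounds; for this the natural candidate is a regularization of the density of $\omega^{p_R}$ with respect to $\mu$ on $R$, and one typically has to perform a further stopping-time argument inside each tree to excise the regions where this density misbehaves, charging the excised cubes to the Carleson packing in~(b).
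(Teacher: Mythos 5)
Your overall road map matches the paper's: the forward direction reduces, via Hofmann--Martell--Mayboroda, to the square-function Carleson estimate (condition~(c) of Theorem~\ref{teo1}), which is then tested against harmonic functions adapted to the stopping cubes to extract the packing condition; the converse runs through suppressed Riesz kernels, a $Tb$ theorem of Nazarov--Treil--Volberg, and corona patching, culminating in \cite{NToV}. However, two implementation choices in your sketch diverge from the paper in ways that matter.

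On the converse, you propose taking as $Tb$ test function $b_R$ ``a regularization of the density of $\omega^{p_R}$ with respect to $\mu$,'' together with an extra stopping time to excise the bad-density set. This is precisely the point you flag as problematic, and the paper avoids it entirely. Instead of building an accretive $b$ comparable to $d\omega^{p_R}/d\mu$ (which need not exist in any useful sense, since the two measures can be mutually singular within a tree), the paper applies the suppressed $Tb$ theorem with the \emph{constant} test function $b\equiv 1$ to the measure $\sigma=\mu(R)\,\omega^{p_R}|_{3R}$ (Lemma~\ref{lemsigma1}, invoking Theorem~\ref{lem:12.1-Vol}). The key input is the pointwise bound $\RR_{\Phi,*}\sigma\lesssim 1$, obtained from the Green-function identity \rf{eqclau1} and the tree-comparability $\omega^{p_R}(3Q)\approx\mu(Q)/\mu(R)$ (Lemma~\ref{lemriesz1}); no density is ever formed. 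The $L^2$ boundedness is then transferred from $\sigma$ to $\mu|_{3R}$ at the level of operators, through a Whitney-type regularization $\reg(R)$ of the stopping region and a Calder\'on--Zygmund decomposition of $f\wt\mu$ against $\sigma$ (Lemmas~\ref{lemregul}, \ref{lem:reg-growth}--\ref{lemsigma4}). This sidesteps exactly the mutual-singularity issue you anticipate; your density-regularization route would have to reproduce the content of these lemmas in a less natural form, and might simply fail where $\omega^{p_R}\perp\mu$.

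On the forward direction, your stopping-time description is close in spirit but omits the key difficulty the paper confronts explicitly: a single low-density stopping alone does \emph{not} give $\mu(B_L(R))\ll\mu(R)$, so one cannot read off the packing condition directly. The paper iterates the low-density stopping $m$ times (Lemma~\ref{lemclau}, with the families $\LD^k(R)$) and sums the Carleson estimate over the iterated generations to get $\mu(B_L^m(R))\lesssim m^{-1}\mu(R)$; the packing of the top cubes $\ttt(R_0)$ (Lemma~\ref{lemcarleson}) then combines this with a Vitali bound on high-density cubes. Also, the construction of the test functions is delicate: they are not just ``solutions with boundary data modeled on characteristic functions of cubes'' but the specific $u_Q=\int_{E_Q} g_Q\,d\omega^x$ of Lemma~\ref{lemapprox}, where $g_Q$ is a Newtonian (or, for $n=1$, a difference of logarithmic) potential chosen so that $\nabla u_Q\cdot e_Q\gtrsim r(V_Q)^{-1}$ on a Whitney ball $V_Q$, giving the lower bound against which \rf{eqhip1} or the $\ve$-approximability is tested. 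These details are the substance of the forward direction; the sketch as written has the right plan but would need all of this filled in before it constitutes a proof.
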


\vvv

\section{Preliminaries}

As usual in harmonic analysis, we denote by $C$ or $c$ constants which usually only depend on the dimension $n$ and
other fixed parameters (such as the constants involved in the AD-regularity of $\partial\Omega$ or the corkscrew condition of $\Omega$), and which may change their values at different occurrences. On the contrary, constants with subscripts such as $c_0$ or $C_0$, do not change their values.
For $a,b\geq 0$, we will write $a\lesssim b$ if there is $C>0$ so that $a\leq Cb$ and $a\lesssim_{t} b$ if the constant $C$ depends on the parameter $t$. We write $a\approx b$ to mean $a\lesssim b\lesssim a$ and define $a\approx_{t}b$ similarly.

\vv
\subsection{Dyadic lattices}\label{subsec21}
Given an $n$-AD-regular measure $\mu$ in $\R^{n+1}$ we consider 
the dyadic lattice of ``cubes'' built by David and Semmes in \cite[Chapter 3 of Part I]{DS2}. These dyadic cubes are not true cubes, but they play the role of cubes with respect to a given $n$-AD-regular measure $\mu$. 
The properties satisfied by $\DD_\mu$ are the following. 
Assume first, for simplicity, that $\diam(\supp\mu)=\infty$. Then for each $j\in\Z$ there exists a family $\DD_{\mu,j}$ of Borel subsets of $\supp\mu$ (the dyadic cubes of the $j$-th generation) such that:
\begin{itemize}
\item[$(a)$] each $\DD_{\mu,j}$ is a partition of $\supp\mu$, i.e.\ $\supp\mu=\bigcup_{Q\in \DD_{\mu,j}} Q$ and $Q\cap Q'=\varnothing$ whenever $Q,Q'\in\DD_{\mu,j}$ and
$Q\neq Q'$;
\vv
\item[$(b)$] if $Q\in\DD_{\mu,j}$ and $Q'\in\DD_{\mu,k}$ with $k\leq j$, then either $Q\subset Q'$ or $Q\cap Q'=\varnothing$;
\vv
\item[$(c)$] for all $j\in\Z$ and $Q\in\DD_{\mu,j}$, we have $2^{-j}\lesssim\diam(Q)\leq2^{-j}$ and $\mu(Q)\approx 2^{-jn}$;
\vv
\item[$(d)$] there exists $C>0$ such that, for all $j\in\Z$, $Q\in\DD_{\mu,j}$, and $0<\tau<1$,
\begin{equation}\label{small boundary condition}
\begin{split}
\mu\big(\{x\in Q:\, &\dist(x,\supp\mu\setminus Q)\leq\tau2^{-j}\}\big)\\&+\mu\big(\{x\in \supp\mu\setminus Q:\, \dist(x,Q)\leq\tau2^{-j}\}\big)\leq C\tau^{1/C}2^{-jn}.
\end{split}
\end{equation}
\end{itemize}
Property (d) is often called the {\em small boundaries condition}.
From (\ref{small boundary condition}), it follows that there is a point $z_Q\in Q$ (the center of $Q$) such that $\dist(z_Q,\supp\mu\setminus Q)\gtrsim 2^{-j}$ (see \cite[Lemma 3.5 of Part I]{DS2}).
We set $\DD_\mu:=\bigcup_{j\in\Z}\DD_{\mu,j}$. 

In the case $\diam(\supp\mu)<\infty$, the families $\DD_{\mu,j}$ are only defined for $j\geq j_0$, with
$2^{-j_0}\approx \diam(\supp\mu)$, and the same properties above hold for $\DD_\mu:=\bigcup_{j\geq j_0}\DD_{\mu,j}$.

Given a cube $Q\in\DD_{\mu,j}$, we say that its side length is $2^{-j}$, and we denote it by $\ell(Q)$. Notice that $\diam(Q)\leq\ell(Q)$. 
We also denote 
\begin{equation}\label{defbq}
B_Q:=B(z_Q,c_1\ell(Q)),
\end{equation}
where $c_1>0$ is some fix constant so that $B_Q\cap\supp\mu\subset Q$, for all $Q\in\DD_\mu$.

For $\lambda>1$, we write
$$\lambda Q = \bigl\{x\in \supp\mu:\, \dist(x,Q)\leq (\lambda-1)\,\ell(Q)\bigr\}.$$

\vv
\subsection{The Riesz transform and harmonic measure}

Given a Radon measure $\mu$ in $\R^{n+1}$, its $n$-{\textit {dimensional Riesz transform}} is defined by
$$\RR\mu(x) = \int \frac{x-y}{|x-y|^{n+1}}\,d\mu(y),$$
whenever the integral makes sense.  For $\ve>0$, we also denote
$$\RR_\ve\mu(x) = \int_{|x-y|>\ve} \frac{x-y}{|x-y|^{n+1}}\,d\mu(y),\qquad
\RR_*\mu(x)= \sup_{\ve>0}\bigl|\RR_\ve \mu(x)\bigr|.$$
For $f\in L^1_{loc}(\mu)$, we write  $\RR_\mu f\equiv \RR(f\mu)$, 
$\RR_{\mu,\ve} f\equiv \RR_\ve(f\mu)$, and $\RR_{\mu,*} f\equiv \RR_*(f\mu)$. 
We say that $\RR_\mu$ is bounded in $L^2(\mu)$ if the
operators $\RR_{\mu,\ve}$ are bounded in $L^2(\mu)$ uniformly on $\ve>0$.
We will also use the {\textit {centered maximal Hardy-Littlewood operator}}
$$M_\mu f(x) =\sup_{r>0} \frac1{\mu(B(x,r))}\int |f|\,d\mu.$$

Let $\EE$ denote the fundamental solution for the Laplace equation in $\R^{n+1}$, so that $\mathcal{E}(x)=c_n\,|x|^{1-n}$ for $n\geq 2$, $c_n>0$.The {\it Green function} $G:\Omega\times \Omega\rightarrow[0,\infty]$ for an open set $\Omega\subset \R^{n+1}$ is a function with the following properties: for each $x\in \Omega$, 
$$G(x,y)=\EE(x-y)+h_{x}(y),$$ 
where $h_{x}$ is harmonic on $\Omega$, and whenever $v_{x}$ is a nonnegative superharmonic function that is the sum of $\EE(x-\cdot)$ and another superharmonic function, then  $v_{x}\geq G(x,\cdot)$, from which it follows that $G(x,y)$ is
unique (\cite[Definition 4.2.3]{H}). 

An open subset of $\R^{n+1}$ having  Green function is called a {\it Greenian} set. By \cite[Theorem 4.2.10]{H}, all open subsets of $\R^{n+1}$ are Greenian for $n\geq 2$. In the case $n=1$, if $\HH^1(\partial\Omega)>0$ for example, 
which is implied by the assumptions of Theorem \ref{teo1}, then $\Omega\subset\R^2$ is Greenian.

We denote by $\omega^p$ the harmonic measure in $\Omega$ with pole at $p\in\Omega$.
The Green function can be then written as follows (see \cite[Lemma 6.8.1]{AG}): for $x,y\in\Omega$, $x\neq y$
\begin{equation}\label{green}
G(x,y) = \mathcal{E}(x-y) - \int_{\partial\Omega} \mathcal{E}(x-z)\,d\omega^y(z).
\end{equation}
Notice that the Riesz transform has kernel 
\begin{equation}\label{eqker}
K(x) = c_n\,\nabla \EE(x),
\end{equation}
for a suitable absolute constant $c_n$, so that for $x, p \in\Omega$, by  \rf{green}  and \rf{eqker} we get
\begin{align}\label{eqclau1}
\RR\omega^{p}(x) = 
c_n\nabla_x\int \EE(x-y)\,d\omega^{p}(y) & = c_n\,\nabla_x\bigl(\EE(x-p) - G(x,{p})\bigr) \\
&= K(x-{p}) - c_n\,\nabla_x G(x,{p}).\nonumber
\end{align}
\vv

The following is a very standard result, usually known as Bourgain's estimate. See for example \cite{AHM3TV} for more details. 

\begin{lemma}
\label{lembourgain}
There is $\delta_{0}>0$ depending only on $n\geq 1$ so that the following holds for $\delta\in (0,\delta_{0}]$. If $\Omega\subsetneq \R^{n+1}$ is a domain, $\xi \in \partial \Omega$, $r>0$, and $B=B(\xi,r)$,
then for all $s>n-1$ and all $x \in \delta B$,  
\begin{equation}\label{Bour}
 \omega^{x}(B)\gtrsim_{n} \frac{\mathcal H_\infty^{s}(\partial\Omega\cap \delta B)}{(\delta r)^{s}}.
 \end{equation}
\end{lemma}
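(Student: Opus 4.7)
The plan is to adapt the classical Bourgain argument using a Frostman measure and its Riesz potential. Assume first $n\ge 2$ and let $F:=\partial\Omega\cap\delta B$; we may assume $\HH^s_\infty(F)>0$. Frostman's lemma produces a positive Borel measure $\nu$ supported on $F$ with $\nu(F)\gtrsim \HH^s_\infty(F)$ and the growth bound $\nu(B(z,t))\le t^s$ for all $z\in\R^{n+1}$ and $t>0$. I would then work with the Newtonian potential
$$
u(y)=\int |y-z|^{1-n}\,d\nu(z),
$$
which, thanks to $s>n-1$, is continuous on $\R^{n+1}$, harmonic off $F\subset\partial\Omega$ (so harmonic in $\Omega$), non-negative, and decays like $|y|^{1-n}\nu(F)$ at infinity.

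The three quantitative estimates to establish, all via the layer-cake identity $\int|y-z|^{1-n}\,d\nu = (n-1)\int_0^\infty t^{-n}\nu(B(y,t))\,dt$ combined with $\nu(B(y,t))\le \min(t^s,\nu(F))$, are:
\begin{itemize}
\item[(i)] $u(y)\lesssim_{n,s}(\delta r)^{s-n+1}$ for every $y\in\partial\Omega$;
\item[(ii)] $u(y)\lesssim_{n} r^{1-n}\nu(F)$ for $y\in\partial\Omega\setminus B$, since there $|y-z|\ge(1-\delta)r\ge r/2$;
\item[(iii)] $u(x)\ge(2\delta r)^{1-n}\nu(F)$ for any $x\in\delta B$, since $F\subset B(x,2\delta r)$.
\end{itemize}

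By Perron--Wiener--Brelot theory (using continuity of $u$, its harmonicity in $\Omega$, and decay at infinity, which is automatic for $n\ge 2$), one has $u(x)=\int_{\partial\Omega}u\,d\omega^x$. Splitting this integral over $\partial\Omega\cap B$ and its complement and inserting (i)--(iii) gives
$$
(2\delta r)^{1-n}\nu(F)\le C_1(\delta r)^{s-n+1}\,\omega^x(B)+C_2\,r^{1-n}\nu(F).
$$
Choosing $\delta_0$ so small that the second term on the right is at most half of the left-hand side (possible because $(2\delta)^{1-n}\to\infty$ as $\delta\to 0$ for $n\ge 2$, while $C_2$ is fixed), a routine rearrangement gives
$$
\omega^x(B)\gtrsim \frac{\nu(F)}{(\delta r)^s}\gtrsim \frac{\HH^s_\infty(F)}{(\delta r)^s},
$$
which is \eqref{Bour}. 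The case $n=1$ follows the same scheme with the logarithmic potential $u(y)=\int\log(R/|y-z|)\,d\nu(z)$ for a suitably chosen normalization $R$, and the analogous layer-cake estimates.

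The main technical point I anticipate is the rigorous justification of the identity $u(x)=\int u\,d\omega^x$ when $\Omega$ may be unbounded and not connected: for $n\ge 2$ it follows from the decay of $u$ at infinity together with standard Perron--Wiener--Brelot theory, whereas for $n=1$ one likely has to exhaust $\Omega$ by bounded regular subdomains and pass to the limit. Everything else reduces to the Frostman condition and the explicit distance bounds outlined above.
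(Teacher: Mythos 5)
The paper offers no proof of this lemma, only a citation to \cite{AHM3TV}; your argument is precisely the standard Bourgain estimate that the reference establishes, and it is correct. One small refinement: rather than invoking the equality $u(x)=\int_{\partial\Omega}u\,d\omega^x$ (which for a general, possibly unbounded domain requires a discussion of the Perron--Wiener--Brelot solution, resolutivity, irregular boundary points, etc.), it is cleaner and sufficient to prove the one-sided maximum-principle bound $u(x)\leq \bigl(\sup_{\partial\Omega\cap B}u\bigr)\,\omega^x(B)+\sup_{\partial\Omega\setminus B}u$ by comparing the bounded harmonic function $u$ directly with $M\,\omega^x(B)+m$ and using that $u$ is continuous, bounded, and tends to $0$ at infinity for $n\geq 2$ (with the usual caveat that the comparison may fail on a polar exceptional set of boundary points, which does not affect the conclusion); this is exactly the form needed for your final inequality, and it sidesteps the technicalities you correctly flagged.
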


\begin{rem}
If $\mu$ is some measure supported on $\d \Omega$ such that $\mu(B(y,r))\leq C\,r^n$ for all $y$, $r>0$, then from the preceding lemma 
it follows that 
\begin{equation}\label{eq:BourFro}
 \omega^{x}(B)\gtrsim  \frac{\mu(\delta_0 B)}{(\delta_0 r)^{n}}\quad\mbox{  for all }x\in \delta B.
\end{equation}
\end{rem}

\vv

The next lemma is also standard. See for example \cite{AHM3TV} or \cite{AMT-porous} for the detailed proof.

\begin{lemma}\label{l:w>G}
Let $\Omega\subsetneq \R^{n+1}$, $\xi \in \partial\Omega$, $r>0$ and $B:=B(\xi,r)$.  Suppose that there exists a point $x_B \in \Omega$ so that the ball $B_0:=B(x_{B},r/C)$ satisfies $4B_0\subset \Omega\cap B$ for some $C>1$. Then in the case $n\geq 2$
the harmonic measure and Green function of $\Omega$ satisfy
\begin{equation}\label{eq:Green-lowerbound}
 \omega^{x}(B)\gtrsim \omega^{x_{B}}(B)\, r^{n-1}\, G(x,x_{B})\,\, \,\,\,\,\,\,\text{for all}\,\, x\in \Omega\backslash  B_0.
 \end{equation}
In the case $n=1$, if  $\Omega$ is Greenian then
\begin{equation}\label{eq:Green-lowerbound2}
 \omega^{x}(B)\gtrsim \omega^{x_{B}}(B)\, \bigl|G(x,x_{B})-
 G(x,z)\bigr|\quad \mbox{for all $x\in\Omega\setminus B_0$ and $z\in \frac12B_0$.}
 \end{equation}
 The implicit constants in \rf{eq:Green-lowerbound} and \rf{eq:Green-lowerbound2} depend only on $C$ and $n$.
\end{lemma}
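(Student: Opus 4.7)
The plan is to argue by the maximum principle on $\Omega\setminus\overline{B_0}$, whose boundary is $\partial\Omega\cup\partial B_0$ (plus the point at infinity when $\Omega$ is unbounded). Since $\omega^{\cdot}(B)$ is positive and harmonic on $\Omega$ and $4B_0\subset\Omega$, Harnack's inequality on $B(x_B,4r/C)$ yields $\omega^{x}(B)\geq c_1\,\omega^{x_B}(B)$ on $\partial B_0$ with $c_1=c_1(n,C)>0$. I would then exhibit, in each case, a function $\tilde G$ that (i) is harmonic on $\Omega\setminus\overline{B_0}$, (ii) vanishes on $\partial\Omega$ (and at infinity when $\Omega$ is unbounded), and (iii) is bounded on $\partial B_0$ by a constant $M=M(n,C)$. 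The harmonic barriers $W_{\pm}(x):=\omega^{x}(B)\mp(c_1/M)\,\omega^{x_B}(B)\,\tilde G(x)$ are non-negative on $\partial\Omega\cup\partial B_0$, so the minimum principle delivers $W_{\pm}\geq 0$ throughout $\Omega\setminus\overline{B_0}$, which is exactly $\omega^{x}(B)\gtrsim\omega^{x_B}(B)\,|\tilde G(x)|$.

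For $n\geq 2$, take $\tilde G(x):=r^{n-1}G(x,x_B)$. Harmonicity and vanishing on $\partial\Omega$ are immediate from \eqref{green}, and the boundedness on $\partial B_0$ uses that $\mathcal{E}\geq 0$ when $n\geq 2$: by \eqref{green}, $G(x,x_B)\leq\mathcal{E}(x-x_B)=c_n|x-x_B|^{1-n}=c_n(C/r)^{n-1}$ for $x\in\partial B_0$, giving $\tilde G(x)\leq c_n C^{n-1}$.

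For $n=1$ the fundamental solution is not signed, and one uses the dipole-type quantity $\tilde G(x):=G(x,x_B)-G(x,z)$, harmonic in $\Omega\setminus\{x_B,z\}$ and vanishing on $\partial\Omega$; its decay at infinity (for unbounded Greenian planar $\Omega$) follows from the cancellation of leading logarithms thanks to $\omega^{x_B}(\partial\Omega)=\omega^{z}(\partial\Omega)=1$. The crux is the uniform bound on $|\tilde G|$ on $\partial B_0$. Writing $G(x,y)=-\frac{1}{2\pi}\log|x-y|+h_y(x)$ with regular part $h_y(x):=\frac{1}{2\pi}\int\log|x-\eta|\,d\omega^{y}(\eta)$, one decomposes
\[
\tilde G(x)=-\frac{1}{2\pi}\log\frac{|x-x_B|}{|x-z|}+\bigl[h_{x_B}(x)-h_{z}(x)\bigr].
\]
On $\partial B_0$ the first summand is bounded by $\frac{\log 2}{2\pi}$ because $|x-x_B|=r/C$ and $|x-z|\in[r/(2C),3r/(2C)]$. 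The second summand is harmonic on all of $\Omega$ with boundary values $\frac{1}{2\pi}\log(|\eta-x_B|/|\eta-z|)$ on $\partial\Omega$; the hypotheses $|\eta-x_B|\geq 4r/C$ and $|x_B-z|\leq r/(2C)$ confine this ratio to $[8/9,8/7]$, so $|h_{x_B}-h_z|\leq\frac{\log(8/7)}{2\pi}$ on $\partial\Omega$, and the maximum principle on $\Omega$ (using vanishing at infinity) propagates this estimate to all of $\Omega$, in particular to $\partial B_0$.

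The main obstacle is this $n=1$ bound: the planar fundamental solution is not signed, so the one-line estimate $G\leq\mathcal{E}$ that handles $n\geq 2$ fails, and one must instead exploit the cancellation in $G(x,x_B)-G(x,z)$ through the regular parts $h_y$. A minor additional point is to justify the maximum principle on the possibly unbounded region $\Omega\setminus\overline{B_0}$, which is routine once the boundedness and decay at infinity of all harmonic functions in play have been verified.
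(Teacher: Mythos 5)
Your proof is correct and is the standard Harnack-plus-maximum-principle barrier argument. The paper does not reproduce a proof of this lemma --- it simply cites \cite{AHM3TV} and \cite{AMT-porous} --- but the argument in those references is essentially the one you describe: Harnack on $4B_0$, the bound $G(\cdot,x_B)\le\mathcal{E}(\cdot-x_B)$ on $\partial B_0$ for $n\ge 2$, and the dipole $G(\cdot,x_B)-G(\cdot,z)$ for $n=1$. One small simplification in the $n=1$ case: using the symmetry $G(x,y)=G(y,x)$ in \eqref{green}, one can write $h_{x_B}(x)-h_z(x)=\frac{1}{2\pi}\int_{\partial\Omega}\log\frac{|x_B-\eta|}{|z-\eta|}\,d\omega^x(\eta)$, which is bounded by $\frac{1}{2\pi}\log\frac{8}{7}$ on all of $\Omega$ at once since $\|\omega^x\|\le 1$ and the integrand lies in $[\log\frac{8}{9},\log\frac{8}{7}]$; this sidesteps the maximum principle and the decay-at-infinity discussion for $h_{x_B}-h_z$ entirely, and together with the elementary estimate $\bigl|\log\frac{|x-x_B|}{|x-z|}\bigr|\le\log 2$ valid for all $x\notin B_0$ (not just $x\in\partial B_0$) it yields the global bound on $\tilde G$ over $\Omega\setminus B_0$ that you need in order to justify the minimum principle on the possibly unbounded region $\Omega\setminus\overline{B_0}$.
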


\vv
If $\partial\Omega$ is $n$-AD-regular and $0<r(B)<\dfrac{\delta_0\diam(\Omega)}2$, then by Lemma \ref{l:w>G} and \eqref{eq:BourFro}, in the case $n\ge2$ we have
for all $x\in \Omega\backslash  2B$ and all $y \in B\cap\Omega$,
 \begin{equation}
 \label{e:w>g}
 \omega^{x}(2\delta_0^{-1}B)\gtrsim  r^{n-1}\,G(x,y)
 \end{equation} 
 Analogously, in the case $n =1$ we have
 for all $x\in \Omega\backslash  2B$ and $y,z\in B\cap\Omega$,
  \begin{equation}
 \label{e:w>gpla}
 \omega^{x}(2\delta_0^{-1}B)\gtrsim  r^{n-1}\, | G(x,y)-G(x,z)|
\end{equation}


\vv
\subsection{Uniform and NTA domains}

Following  \cite{JK}, we say that $\Omega$ satisfies the {\textit {Harnack chain condition}} if there is a constant $c$ such that
for every $\rho> 0$, $\Lambda\geq 1$, and every pair of points $x_1,x_2\in \Omega$ with $\dist(x_i,\partial\Omega)\geq\rho$
 for $i=1,2$ and $|x_1-x_2|<\Lambda\rho$, there is a chain of open balls $B_1,\ldots,B_N\subset\Omega$, with $N\leq C(\Lambda)$, with
$x_1\in B_1$, $x_2\in B_N$, $B_k\cap B_{k+1}\neq\varnothing$ and $\dist(B_k,\partial\Omega)\approx_c \diam(B_k)$
 for all $k$.
The preceding chain of balls is called a ``Harnack chain''.
A connected domain $\Omega\subset\R^{n+1}$ is called a {\textit {uniform domain}}
if it is a corkscrew domain and satisfies the Harnack chain condition.
Finally, $\Omega$ is called an {\textit {NTA domain}},  (for ``non-tangentially 
accessible") if
 if $\Omega$ is a uniform domain and the exterior $\R^{n+1} \setminus \overline \Omega$ is a non-empty corkscrew domain.

 Let $\Cap$ denote  logarithmic capacity if $n=1$ and  Newtonian capacity if $n\geq2$. A domain $\Omega\subset \R^{n+1}$ satisfies the {\it capacity density condition} (or CDC) if there are $R_{\Omega}>0$ and $c_\Omega>0$ such that  for any ball $B$ centered on $\d\Omega$ of radius $r(B)\in (0,R_{\Omega})$,
$$\Cap(B\backslash \Omega)\geq \left\{\begin{array}{ll}
c_\Omega \,r(B) &\quad \mbox{if $n =1$,}\\
\\
c_\Omega \,r(B)^{n-1} &\quad \mbox{if $n\geq 2$.}
\end{array}
\right.
$$
If $\partial \Omega$ is AD regular, then $\Omega$ satisfies the CDC. 


\vvv
\section{The corona decompositon for harmonic measure}\label{sec:corona}

In this section we will show that if either of the assumptions (b) or (c) in Theorem \ref{teo1} holds, then there exists a family $\FF$
having the properties described in Theorem \ref{teo2}. Later, in Section \ref{sec6}, we will show that the existence of such a family $\FF$
implies the uniform rectifiability of $\mu$. The proof of these facts will yield both Theorem \ref{teo1} and \ref{teo2}, because  Hofmann, Martell and Mayboroda have already shown in \cite{HMM2} that both (b) and (c) in Theorem \ref{teo1} hold if
$\mu$ is uniformly rectifiable.

We assume throughout this  section that $\Omega\subset\R^{n+1}$ is a corkscrew domain with $n$-AD-regular boundary, and that either  assumption (b) or (c) of Theorem \ref{teo1} holds.
We denote $\mu=\HH^n|_{\partial\Omega}$, and we consider the associated David-Semmes lattice $\DD_\mu$. 

\subsection{The corona decomposition}\label{subsecorona}

It will be convenient  to rephrase the properties of the required family $\FF$ in terms of a corona type decomposition.
A {\textit {corona decomposition}} of $\mu$ is a partition of $\DD_\mu$ into trees.
A family $\TT\subset\DD_\mu$ is a tree if it
verifies the following properties:
\begin{enumerate}
\item $\TT$ has a maximal element (with respect to inclusion) $Q(\TT)$ which contains all the other
elements of $\TT$ as subsets of $\R^{n+1}$. The cube $Q(\TT)$ is the ``root'' of $\TT$.
\vv
\item If $Q,Q'$ belong to $\TT$ and $Q\subset Q'$, then any $\mu$-cube $P\in\DD^\mu$ such that $Q\subset P\subset
Q'$ also belongs to $\TT$.
\vv
\item If $Q\in\TT$, then either all or none of the children of $Q$ belong to $\TT$.
\end{enumerate}
If $R=Q(\TT)$, we also write $\TT=\tree(R)$.

\vv
The precise result that we intend to prove in this section is the following.

\begin{propo}\label{propo11}
Let $\Omega\subset\R^{n+1}$ be a corkscrew domain with $n$-AD-regular boundary. Denote by
$\mu$ the surface measure on $\partial\Omega$.
Suppose that either the assumption (b) or (c) from Theorem \ref{teo1} holds.
Then $\mu$ admits a corona decomposition
$\DD_\mu=\bigcup_{R\in \ttt} \tree(R)$
so that the family $\ttt$ is a Carleson family, that is,
\begin{equation}\label{eqpack57}
\sum_{R\subset S: R\in\ttt}\mu(R)\leq C\,\mu(S)\quad \mbox{for all $S\in\DD_\mu$},
\end{equation}
and for each $R\in\ttt$ there exists a corkscrew point $p_R\in\Omega$ with 
$$c^{-1}\ell(R)\leq\dist(p_R,R)\leq \dist(p_R,\partial\Omega)\leq c\,\ell(R)$$
so that
\begin{equation}\label{eqpa87}
\omega^{p_R}(3Q)\approx \frac{\mu(Q)}{\mu(R)}\quad\mbox{ for all $Q\in\tree(R)$,}
\end{equation}
with the implicit constant uniform on $Q$ and $R$.
\end{propo}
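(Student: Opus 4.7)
The plan is to construct $\ttt$ by a top-down stopping-time argument driven by the comparison between surface measure $\mu$ and harmonic measure from a fixed distant pole. Iterating from a cube $R_0\in\DD_\mu$, I would pick a corkscrew point $p_{R_0}\in\Omega$ with $\dist(p_{R_0},R_0)\approx\dist(p_{R_0},\partial\Omega)\approx\ell(R_0)$ and let $\tree(R_0)$ consist of those $Q\subset R_0$ such that
$$c_1\,\frac{\mu(P)}{\mu(R_0)}\;\leq\; \omega^{p_{R_0}}(3P)\;\leq\; C_1\,\frac{\mu(P)}{\mu(R_0)}$$
holds at every ancestor $P\in\DD_\mu$ with $Q\subset P\subset R_0$. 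The maximal $Q\subsetneq R_0$ violating the comparison form the stopping family $\sss(R_0)=\sss_\HD(R_0)\cup\sss_\LD(R_0)$, split according to which bound fails; each such $Q$ becomes the root of a new tree with its own pole, and the construction iterates. Bourgain's estimate \eqref{eq:BourFro} and AD-regularity give $\omega^{p_{R_0}}(3R_0)\approx 1$, so $R_0\in\tree(R_0)$, and \eqref{eqpa87} holds for the resulting family by construction. A geometric-series argument reduces the packing \eqref{eqpack57} to the single-scale bound $\sum_{Q\in\sss(R_0)}\mu(Q)\leq \eta\,\mu(R_0)$ for some universal $\eta<1$.

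The high-density packing is essentially free: cubes in $\sss_\HD(R_0)$ are pairwise disjoint and the dilates $3Q$ have bounded overlap, so
$$C_1\sum_{Q\in\sss_\HD(R_0)}\frac{\mu(Q)}{\mu(R_0)}\;\leq\;\sum_{Q\in\sss_\HD(R_0)}\omega^{p_{R_0}}(3Q)\;\lesssim\;\omega^{p_{R_0}}(\partial\Omega)\leq 1,$$
which yields $\sum_{Q\in\sss_\HD(R_0)}\mu(Q)\lesssim C_1^{-1}\mu(R_0)$ and is absorbed by taking $C_1$ large. The main obstacle is the low-density packing, and here hypothesis (b) or (c) must be invoked. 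My approach is to build a single bounded harmonic test function $u$ on $\Omega$ adapted to $\sss_\LD(R_0)$ --- for instance $u(x)=\omega^x(F)$ with $F=\bigcup_{Q\in\sss_\LD(R_0)}(3Q\cap\partial\Omega)$, so that $\|u\|_\infty\leq 1$ and $u(p_{R_0})=\omega^{p_{R_0}}(F)\leq c_1\sum\mu(Q)/\mu(R_0)\leq c_1$ by the LD bound, whereas Bourgain's estimate forces $u(x)\gtrsim 1$ at every corkscrew point of each $Q\in\sss_\LD(R_0)$. Thus $u$ oscillates by a definite amount between a Whitney region $W_Q$ near $Q$ and the pole $p_{R_0}$.

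Converting this oscillation into the local lower bound
$$\int_{W_Q}|\nabla u(x)|^2\,\dist(x,\partial\Omega)\,dx\;\gtrsim\; \mu(Q)$$
is the crux of the argument. Using the Green-function lower bounds in \Lemma{w>G} together with Caccioppoli and Poincar\'e estimates inside scale-$\ell(Q)$ Whitney regions, one shows that $u$ must drop from $\gtrsim 1$ close to $Q$ to $\leq c_1$ across a suitable shell, forcing the stated $L^2$ gradient estimate; taking the $W_Q$ disjoint allows summation over $\sss_\LD(R_0)$. Under hypothesis (c) the sum is controlled by \eqref{eqhip1} applied in a ball of radius $\approx\ell(R_0)$ centered near $R_0$, giving $\sum\mu(Q)\lesssim \mu(R_0)$, which, once $c_1$ is small enough, is at most $\eta\mu(R_0)/2$. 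Under hypothesis (b) the same $u$ is treated via its $\ve$-approximant $\varphi$: the closeness $\|u-\varphi\|_\infty<\ve$ combined with the oscillation of $u$ on $W_Q$ forces $\int_{W_Q}|\nabla\varphi|\gtrsim \ell(Q)^n\approx\mu(Q)$, and the Carleson bound \eqref{epCar} on $|\nabla\varphi|$ delivers the packing. The chief subtlety is that $\Omega$ is not assumed to satisfy any Harnack chain condition, so the various corkscrew points $p_{R_0}$ and $p_Q$ need not be joinable by a nice curve inside $\Omega$ and the oscillation lower bound must be produced locally inside each $W_Q$ using only \Lemma{w>G}, Bourgain's estimate and AD-regularity, possibly after replacing $F$ by a more refined boundary set (or a pair $F_1,F_2$) to preserve additivity over $Q$. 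Once the local bound is in place, combining the LD and HD packings gives $\sum_{Q\in\sss(R_0)}\mu(Q)\leq\eta\mu(R_0)$ with $\eta<1$, which proves \eqref{eqpack57} and completes Proposition \ref{propo11}.
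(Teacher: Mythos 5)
Your proposal follows the paper's general skeleton (stopping time comparing $\omega^{p_R}$ with $\mu$, HD cubes packed by disjointness, LD cubes handled via a bounded harmonic test function and hypothesis (b) or (c)), but the step you yourself call ``the chief subtlety'' is precisely where the argument fails, and it is the main difficulty of the proposition, not a technicality. Your LD packing rests on the local lower bound $\int_{W_Q}|\nabla u|^2\,\dist(x,\partial\Omega)\,dx\gtrsim\mu(Q)$ for $u(x)=\omega^x(F)$ with $F=\bigcup_{Q\in\sss_{\LD}(R_0)}(3Q\cap\partial\Omega)$. But near an LD stopping cube $Q$ one has $u\geq\omega^{\cdot}(3Q\cap\partial\Omega)$, which is close to $1$ on the part of any Whitney region $W_Q$ lying near $\partial\Omega$ (this is essentially Lemma \ref{lempq0}); the only smallness you know is $u(p_{R_0})\leq c_1$ at a pole at distance $\approx\ell(R_0)$ from $Q$. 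Since no Harnack chain condition is assumed, this smallness cannot be transported into a prescribed region near $Q$: $u$ may be $\approx1$ throughout $W_Q$, its oscillation occurring somewhere in $\Omega$ that you cannot localize, hence cannot be summed disjointly over $\sss_{\LD}(R_0)$. The paper produces the local oscillation by a genuinely different mechanism: the pole $p_Q$ is placed at distance $\approx\ve\ell(Q)$ from $\partial\Omega$ as in \rf{eq1}, the test function is $u_Q=\int_{E_Q}g_Q\,d\omega^x$ with $E_Q=Q\setminus B_L(Q)$ and $g_Q$ an explicitly constructed kernel average whose gradient has a fixed direction and size $\approx1/(\ve\ell(Q))$ on the tiny ball $V_Q\ni p_Q$ (Lemma \ref{lemapprox}); because $\omega^{p_Q}(E_Q)\geq1-C\ve^{\alpha}$ (which uses the low-density stopping at the \emph{next} generation, Lemma \ref{lemld}), $u_Q$ is $C^1$-close to $g_Q$ on $V_Q$, and the gradient lower bound is obtained locally with no connectivity hypothesis.

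There is a second, quantitative gap. Even granting your local bound, hypothesis (b) or (c) yields only $\sum_{Q\in\sss_{\LD}(R_0)}\mu(Q)\leq C\,\mu(R_0)$ with a fixed constant (the constant in \rf{eqhip1} or \rf{epCar} times the implicit constant of the oscillation estimate); nothing makes it smaller than $1$, and shrinking $c_1$ does not help, since the oscillation of a function bounded by $1$ cannot exceed $1$, so the lower bound does not improve as $c_1\to0$. Your reduction of \rf{eqpack57} to $\sum_{Q\in\sss(R_0)}\mu(Q)\leq\eta\,\mu(R_0)$ with $\eta<1$ therefore collapses. The paper states explicitly that it cannot prove $\mu(B_L(R))\ll\mu(R)$; instead it iterates the low-density stopping time through the families $\LD^k(R)$, $k\leq m$, with a new pole at each generation, exploits the disjointness of the sets $E_Q$ to keep $u=\sum u_Q$ uniformly bounded so that a single application of (b) or (c) controls all generations at once (Lemma \ref{lemclau}), and then organizes the corona so that smallness is required only for the high-density part, where it comes from choosing the parameter $A$ large (Lemma \ref{lemcarleson}). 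Your HD estimate and the overall stopping-time framework are fine, but without these two ingredients the proposal does not close.
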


It is easy to check that the existence of a corona decomposition such as the one in the proposition implies the existence of a family $\FF\subset\DD_\mu$ like the one described in Theorem \ref{teo2}.
Indeed, if the above corona decomposition exists we just take $\FF=\ttt$, and we can check that this satisfies the properties stated 
in Theorem \ref{teo2}, since \rf{eqpa87} also holds with $5Q$ replaced by $3Q$ (with a different implicit constant).
So Proposition \ref{propo11} proves one of the implications in Theorem \ref{teo2}.

\vv


\vv
\subsection{The approximation lemma}\label{sec:aproximation}

For any $Q\in\DD_\mu$, we consider a corkscrew point $p_Q\in B_Q\cap\Omega$. 
Recall that
$\omega^{p_Q}(Q)\gtrsim1$,
assuming that $p_Q$ has been chosen close enough to the center of $Q$, for example.
A more quantitative result is the following:

\begin{lemma}\label{lempq0}
There are constants $0<\alpha<1$ and $c_2>0$, depending only on $n$ and the AD-regularity constant of $\mu$ such that the
following holds. For any $0<\ve<1/2$ and any $Q\in\DD_\mu$, we have
$$\omega^x(Q)\geq \omega^x(\tfrac34B_Q)\geq1-c_2\ve^\alpha\quad\mbox{ if $x\in \frac12B_Q$\; and\; $\dist(x,\partial\Omega)\leq \ve\,\ell(Q)$.}$$
\end{lemma}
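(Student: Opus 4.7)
The first inequality $\omega^x(Q)\geq \omega^x(\tfrac34 B_Q)$ is immediate: by the choice of $c_1$, $\tfrac34 B_Q\cap\partial\Omega\subset B_Q\cap\partial\Omega\subset Q$, and $\omega^x$ is supported on $\partial\Omega$. So it suffices to bound $u(x)\leq c_2\ve^\alpha$, where
$$u(z):=\omega^z(\partial\Omega\setminus\tfrac34 B_Q)$$
is harmonic in $\Omega$, satisfies $0\leq u\leq 1$, and vanishes on $\partial\Omega\cap \tfrac34 B_Q$. This is the standard H\"older continuity of harmonic measure at a boundary point, available here because AD-regularity of $\partial\Omega$ implies the capacity density condition.

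The plan is to prove it by an iterated oscillation reduction. The base step asserts the existence of $\theta\in(0,1)$ depending only on $n$ and the AD-regularity constants such that, whenever $y\in\partial\Omega$ and $\rho>0$ satisfy $B(y,\rho)\subset\tfrac34 B_Q$, one has $u(z)\leq\theta\cdot\sup_{\Omega\cap B(y,\rho)}u$ for every $z\in\Omega\cap B(y,\delta_0\rho)$, with $\delta_0$ as in Lemma~\ref{lembourgain}. To see this, apply the maximum principle in the auxiliary domain $\tilde\Omega:=\Omega\cap B(y,\rho)$: since $u$ vanishes on $\partial\Omega\cap B(y,\rho)$ and is bounded by $M:=\sup_{\Omega\cap B(y,\rho)}u$ on the remainder of $\partial\tilde\Omega$, one obtains
$$u(z)\leq M\bigl(1-\omega^z_{\tilde\Omega}(\partial\Omega\cap B(y,\rho))\bigr).$$
Bourgain's estimate applied in $\tilde\Omega$ to the ball $B(y,\rho)$, combined with $\HH^n_\infty(\partial\Omega\cap B(y,\delta_0\rho))\gtrsim(\delta_0\rho)^n$ from AD-regularity, produces a uniform lower bound $\omega^z_{\tilde\Omega}(\partial\Omega\cap B(y,\rho))\geq\eta>0$ for $z\in\Omega\cap B(y,\delta_0\rho)$, and so $\theta:=1-\eta$ works.

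To finish, let $y^*\in\partial\Omega$ be the closest boundary point to $x$, so $|x-y^*|=\dist(x,\partial\Omega)\leq\ve\ell(Q)$. Since $x\in\tfrac12 B_Q$, a direct computation gives $B(y^*,r_0\ell(Q))\subset\tfrac34 B_Q$ for some $r_0\approx c_1$, provided $\ve$ is smaller than a fixed dimensional constant; in the complementary range of $\ve$ the bound $1-c_2\ve^\alpha\leq 0$ is trivially satisfied for $c_2$ large enough. Iterating the base step on the nested balls $B(y^*,\delta_0^k r_0\ell(Q))$, $k=0,1,2,\dots$, yields $\sup_{\Omega\cap B(y^*,\delta_0^k r_0\ell(Q))} u\leq\theta^k$, and choosing $k$ maximal with $\delta_0^k r_0\geq\ve$ gives $u(x)\leq C\ve^\alpha$ with $\alpha=\log(1/\theta)/\log(1/\delta_0)$. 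The main technical point is precisely the oscillation step: extracting a \emph{scale-invariant} constant $\theta<1$ from Bourgain's estimate, which relies crucially on AD-regularity guaranteeing full $n$-dimensional content of $\partial\Omega$ at every scale.
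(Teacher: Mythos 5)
Your argument is correct, but it follows a more self-contained route than the paper: the paper's proof is essentially a two-line citation of the boundary H\"older continuity of bounded harmonic functions vanishing on a surface ball (Lemma 4.5 and Corollary 4.6 of \cite{AMT}, available because AD-regularity implies the CDC), whereas you reprove exactly that H\"older estimate from scratch, via the standard oscillation iteration whose base step combines the maximum principle in the sawed-off open set $\Omega\cap B(y,\rho)$ with Bourgain's estimate (Lemma \ref{lembourgain}) and the lower bound $\HH^n_\infty(\partial\Omega\cap B(y,\delta_0\rho))\gtrsim(\delta_0\rho)^n$ coming from AD-regularity; note that $\partial\Omega\cap B(y,\delta_0\rho)\subset\partial\bigl(\Omega\cap B(y,\rho)\bigr)$, so Bourgain's lemma applied in the truncated set does give the uniform $\eta>0$ you need, and your bookkeeping (the containment $B(y^*,r_0\ell(Q))\subset\tfrac34B_Q$ for small $\ve$, absorption of the complementary range of $\ve$ into $c_2$, and $\alpha=\log(1/\theta)/\log(1/\delta_0)$) is fine. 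What your route buys is independence from the external reference, at the cost of length; the constants have the claimed dependence. One small point, which the paper's own wording shares: when $\Omega$ is unbounded, $\omega^x$ need not be a probability measure (mass can escape to infinity, e.g.\ for the unbounded component of the complement of a compact AD-regular set with $n\geq2$), so bounding $u(x)=\omega^x(\partial\Omega\setminus\tfrac34B_Q)$ does not by itself yield $\omega^x(\tfrac34B_Q)\geq1-c_2\ve^\alpha$. The fix is immediate inside your scheme: run the identical iteration for $v(z)=1-\omega^z(\tfrac34B_Q)$, which is harmonic, satisfies $0\leq v\leq1$, and still vanishes continuously on $\partial\Omega\cap\tfrac34B_Q$ at the (Wiener-regular, by the CDC) boundary points; the oscillation step uses nothing else, and the conclusion $v(x)\lesssim\ve^\alpha$ is then literally the statement of Lemma \ref{lempq0}.
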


\begin{proof}
Since $\omega^x((\tfrac34B_Q)^c)$ is harmonic on $\Omega$, bounded by $1$, and vanishes on  $\partial\Omega\cap\frac34B_Q$, and since $\partial\Omega$  is $n$-AD-regular (and thus $\Omega$ satisfies the CDC),  there exists some $\alpha>0$ such that
$$\omega^x((\tfrac34B_Q)^c)\leq C\left(\frac{\dist(x,\partial\Omega)}{\ell(Q)}\right)^\alpha$$
if $x\in\frac12 B_Q$ (see for example Lemma 4.5 and Corollary 4.6 from \cite{AMT}). 
Therefore 
$\omega^x((\tfrac34B_Q)^c)\lesssim \ve^\alpha$ if $\dist(x,\partial\Omega)\leq \ve\,\ell(Q)$.
\end{proof}
\vv

From now on, we will assume that $p_Q\in \frac12B_Q\cap\Omega$, with 
\begin{equation}\label{eq1}
\dist(p_Q,\partial\Omega)\approx\ve\,\ell(Q),\quad \;\ve\ll1,
\end{equation}
so that
$\omega^{p_Q}(Q)\geq \omega^x(\tfrac34B_Q)\geq 1-C\,\ve^\alpha$. The corkscrew condition for $\Omega$ ensures the existence of such point $p_Q$.
We denote by $y_Q$ a point in $\partial\Omega$ such that
\begin{equation}\label{eq-1}
\dist(p_Q,\partial\Omega) = |y_Q-p_Q|,
\end{equation}
and we assume that $p_Q$ has been chosen so that 
\begin{equation}\label{eq0}
B(y_Q,|y_Q-p_Q|)\subset \tfrac34B_Q.
\end{equation}
We also denote 
$V_Q= B(p_Q,\frac1{10}\dist(p_Q,\partial\Omega))$, so that $V_Q\subset\Omega$. Notice that
$$r(V_Q)\approx \ve \,\ell(Q).$$
\vv

The next lemma is the main technical result of this subsection.

\begin{lemma}\label{lemapprox}
Suppose that the constant $\ve$ in \rf{eq1} is small enough.
Let $Q\in\DD_\mu$ and let $E_Q\subset Q$ be such that
$$\omega^{p_Q}(E_Q) \geq (1-\ve)\,\omega^{p_Q}(Q).$$
Then there exists a non-negative harmonic function $u_Q$ on $\Omega$ and a Borel function $f_Q$ with 
$$u_Q(x) = \int_{E_Q} f_Q\,d\omega^x,$$
$$f_Q\leq c\,\chi_{E_Q}\quad\!\! \mbox{if $n\geq2$, \quad and }\quad
f_Q\leq c\,|\log\ve|\,\chi_{E_Q}\quad\!\! \mbox{if $n=1$,}$$
and a unit vector $e_Q\in\R^{n+1}$ such that 
\begin{equation}\label{eqkey29}
\nabla u_Q(x)\cdot e_Q\geq c\,\frac1{r(V_Q)}\quad\mbox{ for all $x\in V_Q$.}
\end{equation}
In particular,
\begin{equation}\label{eqkey30}
\int_{V_Q} |\nabla u_Q(x)|^2\,\dist(x,\partial\Omega)\,dx\gtrsim r(V_Q)^n\approx_\ve \ell(Q)^n.
\end{equation}
\end{lemma}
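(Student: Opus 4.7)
My plan is to construct $u_Q$ as a harmonic-measure functional $u_Q(x)=\omega^x(F)$ for a carefully chosen Borel subset $F\subset E_Q$, so that $f_Q=\chi_F\le\chi_{E_Q}$ (a logarithmic prefactor will appear in the $n=1$ case). Then $u_Q$ is automatically nonnegative, harmonic on $\Omega$, and bounded by $1$. Since $\dist(\cdot,\partial\Omega)\approx r(V_Q)$ on $V_Q$, interior gradient estimates for bounded harmonic functions already give $|\nabla u_Q|\lesssim 1/r(V_Q)$ on $V_Q$, so the only nontrivial task is the matching directional \emph{lower} bound \eqref{eqkey29}; the integral estimate \eqref{eqkey30} then follows at once from $|V_Q|\approx r(V_Q)^{n+1}$ and $\dist(\cdot,\partial\Omega)\approx r(V_Q)$ throughout $V_Q$.

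The choice of $F$ is modeled on the flat half-space. In $\Omega=\R^{n+1}_+$ with $y_Q=0$ and $p_Q=(0,\ldots,0,r(V_Q))$, a direct Poisson-kernel computation shows that for any unit vector $e$ orthogonal to $p_Q-y_Q$, bisecting the boundary by $\{z\cdot e=0\}$ and setting $F_e=E_Q\cap\{z\cdot e>0\}$ yields $\partial_e\omega^x(F_e)\big|_{x=p_Q}\asymp 1/r(V_Q)$. To transplant this to the AD-regular corkscrew setting I would pick any unit $e\perp (p_Q-y_Q)$, set $F_e$ via the hyperplane through $y_Q$ perpendicular to $e$, and choose $e_Q=\pm e$ so that the derivative has the desired sign; the mass-concentration hypothesis $\omega^{p_Q}(E_Q)\ge 1-\ve$ and Lemma \ref{lempq0} guarantee that both halves of $E_Q$ carry $\omega^{p_Q}$-mass bounded below by an absolute constant, which is what the flat-model computation requires.

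For the pointwise lower bound $\nabla u_Q(p_Q)\cdot e_Q\gtrsim 1/r(V_Q)$ in the general setting, I would compare $u_Q$ with the corresponding flat-model harmonic function via the Green-function representation \eqref{green}, writing the gradient as a difference of $\nabla\EE$ terms, and use Bourgain's estimate (Lemma \ref{lembourgain}) together with its AD-regular consequence \eqref{eq:BourFro} and the Green-function lower bound \eqref{e:w>g} to control the error quantitatively at scale $r(V_Q)$. Interior regularity for bounded harmonic functions on the larger ball $B(p_Q,\dist(p_Q,\partial\Omega))\supset V_Q$ then propagates the pointwise estimate at $p_Q$ to the uniform bound \eqref{eqkey29} throughout $V_Q$, since the Hölder modulus of $\nabla u_Q$ across $V_Q$ is a small fraction of $1/r(V_Q)$.

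The main obstacle is the quantitative lower bound for the tangential derivative at $p_Q$ without rectifiability of $\partial\Omega$: one must control the contribution of far-away parts of the boundary using only AD-regularity, the corkscrew condition, and the concentration hypothesis on $E_Q$. The logarithmic loss when $n=1$ arises from the different behavior of the fundamental solution: in $\R^2$ one has $\EE(x)=c\log|x|^{-1}$, so truncating the far-away range of integration in the Green-function comparison for the tangential derivative costs a factor $\log(\ell(Q)/r(V_Q))\approx|\log\ve|$, which must be absorbed into $f_Q$.
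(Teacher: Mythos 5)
There is a genuine gap at the heart of your plan: the quantitative lower bound $\partial_{e}\,\omega^x(F_e)\big|_{x=p_Q}\gtrsim 1/r(V_Q)$ for $F_e=E_Q\cap\{z\cdot e>0\}$ is exactly the step that cannot be obtained in this setting. The half-space Poisson-kernel computation uses the precise structure of harmonic measure in $\R^{n+1}_+$, and there is no hypothesis that $\partial\Omega$ is close to a hyperplane at scale $r(V_Q)$ (indeed the whole point of the lemma is that $\partial\Omega$ is only AD-regular and possibly purely unrectifiable, so no flatness is available at any scale). The tools you invoke to ``transplant'' the flat computation --- Bourgain's estimate \eqref{Bour}, \eqref{eq:BourFro}, and the Green-function bounds \eqref{e:w>g} --- only give upper and lower bounds for harmonic measure of balls; they cannot control the sign or size of a single directional derivative of $x\mapsto\omega^x(F_e)$ at the one point $p_Q$, and the error in any comparison with the flat model is of the same order as the main term, so no lower bound survives. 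Even your preliminary claim that both halves of $E_Q$ across a hyperplane through $y_Q$ carry $\omega^{p_Q}$-mass bounded below is unjustified: $\omega^{p_Q}(E_Q)\geq 1-\ve$ says nothing about how that mass splits relative to an arbitrarily chosen hyperplane, and in any case comparable masses on the two sides would still not force $|\nabla \omega^x(F_e)(p_Q)|\gtrsim 1/r(V_Q)$ --- a bounded harmonic function can have nearly balanced boundary data and yet vanishing gradient at a given interior point.

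The paper circumvents precisely this obstruction by \emph{not} taking $f_Q$ to be an indicator. It first builds an explicit auxiliary potential $g_Q$, namely (for $n\geq2$) the integral of $r(V_Q)^{-1}|x-y|^{1-n}$ against $\mu|_{B(y_Q,r(V_Q))}$, for which the directional derivative $-\partial_1 g_Q\approx 1/r(V_Q)$ on $V_Q$ follows from a direct kernel computation plus AD-regularity ($\mu(B(y_Q,r(V_Q)))\approx r(V_Q)^n$), with no geometric information about $\partial\Omega$ beyond that. Since $g_Q$ is bounded, continuous on $\overline\Omega$ and harmonic in $\Omega$, one has $g_Q(x)=\int g_Q\,d\omega^x$, and the hypothesis $\omega^{p_Q}(E_Q)\geq(1-\ve)\,\omega^{p_Q}(Q)$ together with Lemma \ref{lempq0} and Harnack gives $\omega^x(\partial\Omega\setminus E_Q)\lesssim\ve^\alpha$ on $2V_Q$; hence $u_Q(x)=\int_{E_Q}g_Q\,d\omega^x$ differs from $g_Q$ by $O(\ve^\alpha)$ there, and interior estimates transfer the gradient lower bound from $g_Q$ to $u_Q$. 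This is also where the correct source of the $|\log\ve|$ factor for $n=1$ lies: it is $\|g_Q\|_\infty\lesssim|\log\ve|$ for the (suitably renormalized, two-ball) logarithmic potential, not a truncation loss in a Green-function comparison. If you want to rescue your scheme, the fix is essentially to replace $\chi_F$ by such an explicit density $g_Q$ whose gradient you can compute by hand; as written, the key derivative estimate in your second and third paragraphs has no proof.
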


\begin{proof}[Proof in the case $n\geq2$]
Let $y_Q\in\partial\Omega$ be the point defined in \rf{eq-1}. By rotating the domain if necessary we may assume that $p_Q-y_Q$ is parallel to the $x$ axis and that $p_{Q,1}>y_{Q,1}$.
Then, for all $x\in V_Q$ and all $y\in B(y_Q,r(V_Q))$,
$$0<\frac{x_1-y_1}{|x-y|^{n+1}} \approx \frac1{r(V_Q)^n}.$$
Therefore, if we take
\begin{equation}\label{deggg*}
g_Q(x) := \int_{B(y_Q,r(V_Q))} \frac1{r(V_Q)\,|x-y|^{n-1}}\,d\mu(y) \quad \mbox{ if $n\geq2$,}
\end{equation}
then we have
\begin{align*}
|\nabla g_Q(x)|& \geq -\partial_1 g_Q(x) = c\int_{B(y_Q,r(V_Q))} \frac{x_1-y_1}{r(V_Q)\,|x-y|^{n+1}}\,d\mu(y)\\
&\approx \frac{\mu(B(y_Q,r(V_Q))}{r(V_Q)^{n+1}} \approx \frac1{r(V_Q)}
\quad\mbox{ for all $x\in V_Q$.}
\end{align*}

By the AD-regularity of $\mu$, it is also immediate that
$\|g_Q\|_\infty\lesssim 1$. 
Then we define $f_Q:=\chi_{E_Q}\,g_Q$ and
$$u_Q(x) := \int f_Q\, d\omega^x = \int_{E_Q} g_Q\,d\omega^x.$$
To prove the estimate \rf{eqkey29} with $e_Q=-e_1$, first note that $g_Q$ is harmonic in $\Omega$ and continuous in $\R^{n+1}$, because of the local $\mu$ uniform integrability of $1/|x-y|^{n-1}$.
Thus, for all $x\in\Omega$,
$$g_Q(x) = \int g_Q\,d\omega^x,$$
and then,
\begin{equation}\label{eqggg*0}
\bigl|g_Q(x) - u_Q(x)\bigr| = \left|\int_{\partial\Omega\setminus E_Q} g_Q\,d\omega^x\right|
\leq \|g_Q\|_\infty\,\omega^x(\partial\Omega\setminus E_Q)\lesssim 
\,\omega^x(\partial\Omega\setminus E_Q).
\end{equation}
By \rf{eq1} and  the assumption in the lemma,
$$
\omega^{p_Q}(\partial\Omega\setminus E_Q) = 
\omega^{p_Q}(\partial\Omega\setminus Q) + \omega^{p_Q}(Q\setminus E_Q)
\leq C\ve^\alpha + \ve\lesssim \ve^\alpha,
$$
and then 
 by Harnack's inequality 
it follows that 
\begin{equation}\label{eqggg*1}
\omega^{x}(\partial\Omega\setminus E_Q)\lesssim\ve^\alpha\quad\mbox{ for all $x\in 2V_Q$.}
\end{equation}
Therefore, 
$$
\bigl|g_Q(x) - u_Q(x)\bigr|\lesssim \ve^\alpha\quad\mbox{ for all $x\in 2V_Q$.}
$$
Since $g_Q-u_Q$ is harmonic, we have
\begin{equation}\label{eqggg*2}
\bigl|\nabla(g_Q - u_Q)(x)\bigr|\lesssim \frac1{r(V_Q)}\;\avint_{2V_Q}|g_Q-u_Q|\,dy\lesssim
\ve^\alpha\,\frac{1}{r(V_Q)}\quad\mbox{ for all $x\in V_Q$,}
\end{equation}
and so, assuming $\ve$ small enough, 
$$-\partial_1u_Q(x) \geq -\partial_1g_Q(x) - \bigl|\nabla(g_Q - u_Q)(x)\bigr| \gtrsim \frac{1}{r(V_Q)} \quad\mbox{ for all $x\in V_Q$,}$$
which concludes the proof of \rf{eqkey29}.

\vv
The final estimate \rf{eqkey30} is an immediate consequence of \rf{eqkey29}.
\end{proof}

\vv

\begin{proof}[Proof of Lemma \ref{lemapprox} in the case $n=1$]
As above, we assume that $p_Q-y_Q$ is parallel to the $x$ axis and that $p_{Q,1}>y_{Q,1}$,
so that
\begin{equation}\label{eqfh5890}
0<\frac{x_1-y_1}{|x-y|^2} \approx \frac1{r(V_Q)}\quad\mbox{for all $x\in V_Q$ and all $y\in B(y_Q,r(V_Q))$.}
\end{equation}

We now define a function $g_Q$ which will play the role of the analogous one in \rf{deggg*}. To this end, note that because of the AD-regularity of $\mu$ there exists some point 
$$y_2\in\supp\mu \cap A\bigl(y_Q,3\ell(Q), 4c_0^2\ell(Q)\bigr),$$
where $A(x,r_1,r_2)$ stands for the open annulus centered at $x$ with inner radius $r_1$ and
outer radius $r_2$, and 
$c_0$ is the AD-regularity constant of $\mu$.
Consider a ball $B_2$ centered at $y_2$ with radius $r(V_Q)$. To shorten notation we write
$B_1=B(y_Q,r(V(Q))$, $r=r(V_Q)$, and $y_1=y_Q$. Then we define
\begin{equation}\label{deggg***}
g_Q(x) =  \int_{B_1} \frac1r\,\log\frac1{|x-y|}\,d\mu(y) - \frac{\mu(B_1)}{\mu(B_2)}
\int_{B_2} \frac1r\,\log\frac1{|x-y|}\,d\mu(y).
\end{equation}
We claim that $g_Q$ satisfies the following properties:
\vv
\begin{itemize}
\item[(a)] $g_Q\in C_0(\overline \Omega)$,
\vv
\item[(b)] $g_Q\geq0$ on $Q$ and $\|g_Q\|_\infty\lesssim |\log\ve|$,
\vv
\item[(c)] $ 0\leq -\partial_1 g_Q(x)  \approx \dfrac1{r(V_Q)}$
 for all $x\in V_Q$.
\end{itemize}

Using the properties above and arguing as in the case $n\geq2$, we can complete the proof of the lemma
in this case. Indeed, from \rf{eqggg*0}, taking account that  
$
\omega^{p_Q}(\partial\Omega\setminus E_Q)  \lesssim \ve^\alpha$ (because \rf{eqggg*1} is still valid)
and that $\|g_Q\|_\infty\lesssim |\log\ve|$, we deduce that
$$\bigl|g_Q(x) - u_Q(x)\bigr|\leq \ve^\alpha\,|\log\ve|\quad \mbox{for all $x\in V_Q$.}$$
Then, as in \rf{eqggg*2} we derive that
$$\bigl|\nabla(g_Q - u_Q)(x)\bigr|\lesssim 
\ve^\alpha\,|\log\ve|\,\frac{1}{r(V_Q)}\quad\mbox{ for all $x\in V_Q$,}$$
which together with the property (c) above yields \rf{eqkey29}, for $\ve$ small enough. 

Again, the final estimate \rf{eqkey30} is an immediate consequence of \rf{eqkey29}.

\vv
We now verify the claims (a), (b) and (c). The continuity of $g_Q$ follows easily from the local
$\mu$ uniform integrability of the kernel $\log\frac1{|x-y|}$. To see that it vanishes at infinity, note that
$g_Q$ can be written as follows:
\begin{equation}\label{eqgp94}
g_Q(x) =  \int_{B_1} \frac1r\,\log\frac{|x-y_1|}{|x-y|}\,d\mu(y) - \frac{\mu(B_1)}{\mu(B_2)}
\int_{B_2} \frac1r\,\log\frac{{|x-y_1|}}{|x-y|}\,d\mu(y),
\end{equation}
and then 
$$\log\frac{|x-y_1|}{|x-y|}\to 0 \quad \mbox{as $x\to\infty$.}$$

To show that $g_Q\geq0$ on $Q$, write
$$g_Q(x) =  \int_{B_1} \frac1r\,\log\frac{\ell(Q)}{|x-y|}\,d\mu(y) - \frac{\mu(B_1)}{\mu(B_2)}
\int_{B_2} \frac1r\,\log\frac{{\ell(Q)}}{|x-y|}\,d\mu(y) =: g_1(x)- g_2(x).
$$
Observe that
$$\frac{\ell(Q)}{|x-y|}> 1\quad \mbox{ for all $y\in B_1$,}$$
while
$$\frac{\ell(Q)}{|x-y|}< 1\quad \mbox{ for all $y\in B_2$.}$$
So $g_1(x)>0$ and $g_2(x)<0$ for $x\in Q$, and thus $g_Q(x)>0$ on $Q$.

To estimate $\|g_Q\|_\infty$, suppose first that $x\not\in B(y_1,10c_0^2\ell(Q))$. For these 
points $x$ we have
$$|x-y|\approx |x-y_1|\approx |x-y_2|\quad \mbox{ for all $y\in B_1\cup B_2$.}$$
So 
$$-C\leq \log\frac{|x-y_1|}{|x-y|}\leq C \quad \mbox{for $x\not\in B(y_1,10c_0^2\ell(Q))$ and $y\in B_1\cup B_2$.}.$$
Then, from the identity \rf{eqgp94}, taking into account that $\mu(B_1)\approx\mu(B_2)$ we deduce
$$|g_Q(x)|\lesssim \frac{\mu(B_1)}r + \frac{\mu(B_2)}r \lesssim 1 \quad
\mbox{for $x\not\in B(y_1,10c_0^2\ell(Q))$}.$$
In the case $x\in B(y_1,10c_0^2\ell(Q))$ we write
$$g_Q(x) =  \int_{B_1} \frac1r\,\log\frac{r}{|x-y|}\,d\mu(y) - \frac{\mu(B_1)}{\mu(B_2)}
\int_{B_2} \frac1r\,\log\frac{r}{|x-y|}\,d\mu(y) =:\wt g_1(x)- \wt g_2(x).
$$
Let us estimate $\wt g_1(x)$. To this end, note first that if $x\in B(y_1,10c_0^2\ell(Q))\setminus 2B_1$, then
$$1\leq\frac{|x-y|}r \lesssim\frac{\ell(Q)}r \approx \frac1\ve.$$
Hence $|\log\frac{r}{|x-y|}|\lesssim |\log\ve|$ and thus $|\wt g_1(x)|\lesssim |\log\ve|$. On the other hand,
if $x\in 2B_1$, then
$$|\wt g_1(x)|\leq \int_{B(x,4r)} \frac1r\,\left|\log\frac{r}{|x-y|}\right|\,d\mu(y).$$
By the linear growth of $\mu$ it easy to check that the last integral is bounded above by
some constant depending only on the growth constant for $\mu$. So in any case we have $|\wt g_1(x)|\lesssim |\log\ve|$ for $x\in B(y_1,10c_0^2\ell(Q))$. The same estimate holds for $\wt g_2(x)$, and then it follows 
that 
$$|g_Q(x)|\lesssim |\log\ve|\quad \mbox{ for all $x\in B(y_1,10c_0^2\ell(Q))$,}$$
which concludes the proof of $\|g_Q\|_\infty\lesssim |\log\ve|$.

Finally we verify (c).
We have
$$-\partial_1 g_Q(x) =  \int_{B_1} \frac1{2r}\,\frac{x_1-y_1}{|x-y|^2}\,d\mu(y) - \frac{\mu(B_1)}{\mu(B_2)}
\int_{B_2} \frac1{2r}\,\frac{x_1-y_1}{|x-y|^2}\,d\mu(y) =: h_1(x) - h_2(x).$$ 
From \rf{eqfh5890} we get
$$0<h_1(x) \approx \frac1{r(V_Q)} \quad\mbox{ for all $x\in V_Q$,}$$
while taking into account that $\dist(V_Q,B_2)\approx\ell(Q)$, we have
$$|h_2(x)| \lesssim \frac1{\ell(Q)} \quad\mbox{ for all $x\in V_Q$,}$$
Thus
$$-\partial_1 g_Q(x) \gtrsim \frac1{r(V_Q)} - \frac c{\ell(Q)} \approx \frac1{r(V_Q)},$$
and so the proof of the claim is concluded.
\end{proof}

\vv


\subsection{The stopping cubes}

Before defining the family $\ttt$, we need to define,
for any given $R\in\DD_\mu$, two associated families
$\HD(R)$ and $\LD(R)$ of high density and low density cubes, respectively.

Let $0<\delta\ll1$ and $A\gg1$ be some fixed constants. For a fixed a cube $R\in\DD_\mu$,
let $Q\in\DD_\mu$, $Q\subset R$.
We say that $Q\in\HD(R)$ (high density) if $Q$ is a maximal cube satisfying
$$\frac{\omega^{p_R}(2Q)}{\mu(2Q)}\geq A \,\frac{\omega^{p_R}(2R)}{\mu(2R)}.$$
We say that $Q\in\LD(R)$ (low density) if $Q$ is a maximal cube satisfying
$$\frac{\omega^{p_R}(Q)}{\mu(Q)}\leq \delta \,\frac{\omega^{p_R}(R)}{\mu(R)}$$
(notice that $\omega^{p_R}(R)\approx \omega^{p_R}(2R)\approx 1$ by \rf{Bour}). Observe that the definition of the family $\HD(R)$ involves the density of $2Q$, while the one of $\LD(R)$ involves the density of $Q$.

We denote
$$B_H(R)=\bigcup_{Q\in \HD(R)} Q \quad\mbox{ and }\quad B_L(R)=\bigcup_{Q\in \LD(R)} Q.$$
\vv

\begin{lemma}\label{lemhd}
We have
$$\mu(B_H(R)) \lesssim \frac1A\,\mu(R).$$
\end{lemma}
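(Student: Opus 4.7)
The plan is to combine the maximality of the stopping cubes with a standard Vitali-type selection. First I would use that, by maximality, the cubes in $\HD(R)$ are pairwise disjoint as subsets of $\supp\mu$. Next, from the choice of $p_R$ in \eqref{eq1} and Lemma \ref{lempq0}, $\omega^{p_R}(R)\geq 1 - C\ve^\alpha$ is close to $1$, hence $\omega^{p_R}(2R)\approx 1$ as well. The defining inequality for membership in $\HD(R)$ can therefore be rewritten as
$$\mu(2Q)\leq \frac{1}{A}\cdot\frac{\mu(2R)}{\omega^{p_R}(2R)}\cdot\omega^{p_R}(2Q)\lesssim \frac{\mu(R)}{A}\,\omega^{p_R}(2Q),$$
where in the last step I also used the AD-regularity of $\mu$ to see $\mu(2R)\approx \mu(R)$.

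The remaining task is to sum $\omega^{p_R}(2Q)$ over $Q\in\HD(R)$ without losing a factor depending on $A$. For this I would pass from the possibly overlapping dilates $\{2Q\}_{Q\in\HD(R)}$ to a disjoint subfamily. Since $\diam(Q)\leq \ell(Q)$, one has $2Q\subset B(z_Q,2\ell(Q))\cap\supp\mu$, so Vitali's covering theorem yields a subfamily $\HD'(R)\subset\HD(R)$ for which the balls $\{B(z_Q,2\ell(Q))\}_{Q\in\HD'(R)}$ are pairwise disjoint and such that their $5$-fold dilates cover $\bigcup_{Q\in\HD(R)}2Q$. By AD-regularity, $\mu(B(z_Q,10\ell(Q)))\approx \ell(Q)^n\approx \mu(2Q)$, so
$$\mu(B_H(R))\leq \mu\!\left(\bigcup_{Q\in\HD(R)}2Q\right)\lesssim \sum_{Q\in\HD'(R)}\mu(2Q).$$

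Combining the two estimates, and using that the $2Q$ with $Q\in\HD'(R)$ are pairwise disjoint (being contained in disjoint balls), I would conclude
$$\mu(B_H(R))\lesssim \frac{\mu(R)}{A}\sum_{Q\in\HD'(R)}\omega^{p_R}(2Q)\leq \frac{\mu(R)}{A}\,\omega^{p_R}\!\left(\bigcup_{Q\in\HD'(R)}2Q\right)\leq \frac{\mu(R)}{A}.$$

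There is no serious obstacle in this argument; the only point requiring a small amount of care is the Vitali selection needed to transfer from overlapping $2Q$'s to a subfamily on which the harmonic measures add without multiplicity, and the verification that $\omega^{p_R}(2R)\approx 1$, which is already implicit in the setup of \S\ref{sec:aproximation}.
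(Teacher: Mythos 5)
Your argument is correct and follows the same route as the paper's proof: rewrite the high-density condition to bound $\mu(2Q)$ by $\tfrac{1}{A}\mu(R)\,\omega^{p_R}(2Q)$ (using $\omega^{p_R}(2R)\approx 1$ and AD-regularity), then apply a Vitali-type selection to pass to a disjoint subfamily of dilates and sum the harmonic measures. The only cosmetic difference is that the paper runs Vitali directly on the sets $2Q$ (obtaining a $6Q$-cover), whereas you run the $5r$-covering lemma on enclosing balls $B(z_Q,2\ell(Q))$; the two are interchangeable here.
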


\begin{proof}
By Vitali's covering theorem, there exists a subfamily $I\subset\HD(R)$ so that the cubes $2Q$, $Q\in I$, are pairwise disjoint and 
$$\bigcup_{Q\in\HD(R)} 2Q\subset \bigcup_{Q\in I} 6Q.$$
Then, using that $\mu$ is doubling,
\begin{align*}
\mu(B_H(R)) \lesssim \sum_{Q\in I}\mu(2Q) \leq \frac 1A
\sum_{Q\in I}\frac{\omega^{p_R}(2Q)}{\omega^{p_R}(2R)}\,\mu(2R) \lesssim
\frac1A\,\mu(R).
\end{align*}
\end{proof}

\vv
Concerning the low density cubes, we have:

\begin{lemma}\label{lemld}
We have
$$\omega^{p_R}(B_L(R)) \leq \delta\,\omega^{p_R}(R).$$
\end{lemma}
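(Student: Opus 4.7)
The proof is essentially a one-line computation exploiting the maximality that defines $\LD(R)$ together with the dyadic structure. First I would observe that since the cubes in $\LD(R)$ are chosen to be maximal dyadic subcubes of $R$ satisfying the density inequality, any two of them are either equal or disjoint; in particular, $\{Q : Q \in \LD(R)\}$ is a pairwise disjoint subfamily of $\DD_\mu$ contained in $R$. This allows us to sum $\mu(Q)$ (resp.\ $\omega^{p_R}(Q)$) over $\LD(R)$ without overcounting and bound the total by $\mu(R)$ (resp.\ $\omega^{p_R}(R)$).

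Next I would simply apply the defining inequality of $\LD(R)$ cube by cube and sum:
\begin{align*}
\omega^{p_R}(B_L(R))
&= \sum_{Q \in \LD(R)} \omega^{p_R}(Q)
\leq \sum_{Q \in \LD(R)} \delta\,\frac{\omega^{p_R}(R)}{\mu(R)}\,\mu(Q) \\
&\leq \delta\,\frac{\omega^{p_R}(R)}{\mu(R)}\,\mu(R)
= \delta\,\omega^{p_R}(R).
\end{align*}
The first equality uses pairwise disjointness of the $Q \in \LD(R)$, the first inequality is the low-density stopping condition, and the second inequality uses $\sum_{Q\in\LD(R)}\mu(Q)\leq\mu(R)$.

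There is no real obstacle here: this lemma is the dual of Lemma \ref{lemhd} but is even easier, because the low-density condition is phrased directly in terms of $Q$ (not $2Q$), so no Vitali covering argument is needed to handle overlaps. The only care required is the verification that maximal dyadic cubes in $\DD_\mu$ satisfying a given condition are automatically disjoint, which is immediate from property (b) in Subsection \ref{subsec21}.
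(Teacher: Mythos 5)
Your argument is correct and coincides with the paper's own proof: both use the pairwise disjointness of the maximal cubes in $\LD(R)$, apply the low-density stopping inequality to each cube, and sum using $\sum_{Q\in\LD(R)}\mu(Q)\leq\mu(R)$. Your remark that no Vitali covering is needed (unlike Lemma \ref{lemhd}) is also consistent with how the paper treats the two cases.
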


\begin{proof}
Since the cubes from $\LD(R)$ are pairwise disjoint, we have
\begin{align*}
\omega^{p_R}(B_L(R)) = \sum_{Q\in\LD(R)}\omega^{p_R}(Q) \leq \delta
\sum_{Q\in\LD(R)}\frac{\mu(Q)}{\mu(R)}\,\omega^{p_R}(R) \leq \delta\,\omega^{p_R}(R). 
\end{align*}
\end{proof}
\vv


\subsection{The family $\ttt(R_0)$ and the trees of the corona decomposition}\label{sectop}

In this subsection, we define, for each $R_0\in\DD_\mu$,  a localized version of the family $\ttt$, which
we will denote by $\ttt(R_0)$.
To this end, given a cube $R\in\DD_\mu$ we let 
$$\sss(R):=\{ S \in \HD(R)\cup\LD(R):  \nexists \,\,\wt S \in \HD(R)\cup\LD(R) \,\, \textup{such that}\,\,  S \subsetneq \wt S\}.$$
Notice that by maximality with respect to the inclusion in $\HD(R)\cup\LD(R)$, $\sss(R)$ is a family of pairwise disjoint cubes. We define 
$$\tree(R):=\{ Q \in \DD_\mu (R): \nexists \,\, S \in \sss(R) \,\, \textup{such that}\,\, Q\subsetneq S\}.$$ 
 In particular, note that $\sss(R)\subset\tree(R)$. We also define
 $$\wt\sss(R):= \{Q \in \DD_\mu (R): \exists \,\, S \in \sss(R) \,\, \textup{such that}\,\, Q\in \ch(S)\},$$ 
 where $\ch(S)$ stands for the children of $S$. Notice that this family is also pairwise disjoint.

We fix a cube $R_0\in\DD_\mu$ and we define the family of the top cubes with respect to $R_0$ as follows:
first we define the families $\ttt_k(R_0)$ for $k\geq0$ inductively. We set
$$\ttt_0(R_0)=\{R_0\}.$$
Assuming that $\ttt_k(R_0)$ has been defined, we set
$$\ttt_{k+1}(R_0) = \bigcup_{R\in\ttt_k(R_0)}\wt\sss(R),$$
and then we define
$$\ttt(R_0)=\bigcup_{k\geq0}\ttt_k(R_0).$$
Notice that
$$\DD_\mu(R_0)= \bigcup_{R\in\ttt(R_0)}\tree(R),$$
and this union is disjoint.

We denote by $\ttt_H(R_0)$ the subfamily of the cubes from $\ttt(R_0)$ whose parents belong to $\HD(R)$ for some $R\in\ttt(R_0)$, and by $\ttt_L(R_0)$ the subfamily of the cubes from $\ttt(R_0)$ whose parents belong to $\LD(R)$ for some $R\in\ttt(R_0)$. So we have
$$\ttt(R_0)=\{R_0\}\cup\ttt_H(R_0)\cup\ttt_L(R_0).$$
Observe also that if $Q\in\ttt_H(R_0)$ (resp.\ $\ttt_L(R_0)$), then any sibling of $R$ also belongs to
$\ttt_H(R_0)$ (resp.\ $\ttt_L(R_0)$).
\vv

\begin{lemma}\label{lemfac84}
For any $R\in\ttt(R_0)$, the following hods:
$$
\omega^{p_R}(3Q)\approx_{\delta,A} \frac{\mu(Q)}{\mu(R)}\quad\mbox{ for all $Q\in\tree(R)$.}$$
\end{lemma}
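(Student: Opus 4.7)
The plan is to route everything through the $\DD_\mu$-parent $\hat Q$ of $Q$. The point is that when $Q\subsetneq R$ lies in $\tree(R)$, the parent $\hat Q$ sits in $\tree(R)\setminus\sss(R)$ and hence (as one shows) fails both the HD and the LD density conditions of Subsection~3.3. Combining these two failures with the geometric sandwich
$$
\hat Q \subset 3Q \subset 2\hat Q,
$$
which follows from $\diam(\hat Q)\leq \ell(\hat Q)=2\ell(Q)$ and the definition $\lambda Q=\{x\in\supp\mu:\dist(x,Q)\leq(\lambda-1)\ell(Q)\}$, will yield the upper and lower bounds on $\omega^{p_R}(3Q)$ respectively.

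In more detail, the case $Q=R$ is immediate since $\mu(Q)/\mu(R)=1$ and Lemma~\ref{lempq0} (applied to the corkscrew point $p_R$ with $\ve$ small enough) gives $\omega^{p_R}(R)\gtrsim 1$. For $Q\subsetneq R$, first note that $\ell(\hat Q)\leq\ell(R)$ forces $\hat Q\subset R$, and the membership $\hat Q\in\tree(R)\setminus\sss(R)$ follows by contradiction: any $S\in\sss(R)$ with $\hat Q\subseteq S$ would give $Q\subsetneq S$, violating $Q\in\tree(R)$. The failure of HD/LD density at $\hat Q$ then comes from another contradiction: if $\hat Q$ satisfied the HD density condition, then a maximal cube $\hat Q'\supseteq\hat Q$ satisfying it belongs to $\HD(R)$; taking the maximal element $S^*\in\HD(R)\cup\LD(R)$ containing $\hat Q'$ (possibly $S^*=\hat Q'$, possibly a strictly larger LD cube) produces $S^*\in\sss(R)$ with $\hat Q\subseteq S^*$, hence $Q\subsetneq S^*$, contradiction. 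The same chase rules out the LD density.

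Once $\hat Q$ is known to be neither HD nor LD, and using $\omega^{p_R}(R)\approx 1$ from Lemma~\ref{lempq0} together with the AD-regular doubling of $\mu$, the two bounds drop out:
$$
\omega^{p_R}(3Q) \leq \omega^{p_R}(2\hat Q) < A\,\frac{\mu(2\hat Q)}{\mu(2R)}\,\omega^{p_R}(2R) \lesssim A\,\frac{\mu(Q)}{\mu(R)},
$$
$$
\omega^{p_R}(3Q) \geq \omega^{p_R}(\hat Q) > \delta\,\frac{\mu(\hat Q)}{\mu(R)}\,\omega^{p_R}(R) \gtrsim \delta\,\frac{\mu(Q)}{\mu(R)}.
$$

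The main obstacle is the combinatorial step verifying that $\hat Q$ fails both density conditions; the subtlety is that maximality is built into the definitions of $\HD(R)$ and $\LD(R)$, so one must chase the density property all the way to the relevant maximal element of $\HD(R)\cup\LD(R)$ in order to convert the hypothesis $Q\in\tree(R)$ into a contradiction. Everything else reduces to the sandwich $\hat Q\subset 3Q\subset 2\hat Q$ combined with AD-regular doubling, so no further analytic input beyond Lemma~\ref{lempq0} is needed.
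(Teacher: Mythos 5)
Your proof is correct and follows exactly the paper's route: both reduce to the sandwich $\hat Q\subset 3Q\subset 2\hat Q$, both split into the cases $\hat Q\subset R$ (equivalently $Q\subsetneq R$) and $Q=R$, and both extract the two bounds from the failure of the HD and LD density conditions at the parent $\hat Q$. The only difference is that you spell out the maximality chase behind the paper's terse ``by construction,'' which is a reasonable amount of detail to supply.
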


\begin{proof}
Let $\wh Q\subset \DD_\mu$ be the parent  of $Q$.
It is immediate to check that
$\wh Q\subset 3Q\subset 2 \wh Q$. If $\wh Q\subset R$, by construction
$$\omega^{p_R}(3Q)\leq \omega^{p_R}(2\wh Q) \leq A\,\omega^{p_R}(2R)\,\frac{\mu(2Q)}{\mu(2R)} \approx A\,\frac{\mu(Q)}{\mu(R)},
$$
and also
$$\omega^{p_R}(3Q)\geq \omega^{p_R}(\wh Q) \geq \delta\,\omega^{p_R}(R)\,\frac{\mu(Q)}{\mu(R)} \approx \delta\,\frac{\mu(Q)}{\mu(R)}.$$

If $\wh Q\not\subset R$, then $\wh Q$ is the parent of $R$ and thus $3Q\supset R$, which implies that 
$\omega^{p_R}(3Q)\approx 1$ and 
$\mu(Q)\approx\mu(R)$. Hence the estimate in the lemma is trivially true.
\end{proof}

\vv


\subsection{The iterative construction and the key lemma}

Our next goal is to prove  that the family $\ttt(R_0)$ satisfies a packing condition analogous to the one 
stated in \rf{eqpack57} for the family $\ttt$.
The proof would be easy if the inequality $\mu(B_L(R))) \ll \mu(R)$ followed from Lemma 3.5, but   
we are unable to verify that. 
 In this subsection we instead prove a
variant of the above inequality for $B_L^m(R)$ for some $m\geq 1$. The set $B_L^m(R)$ is defined as follows.

For $R\in\DD_\mu$, we denote $\LD^0(R)=\{R\}$, $\LD^1(R)=\LD(R)$, and for $k\geq1$ we consider the families of cubes
$$\LD^{k+1}(R) = \bigcup_{Q\in\LD^k(R)} \LD(Q),$$
and the subset of $R$ given by
$$B_L^k(R)=\bigcup_{Q\in\LD^k(R)} Q.$$
Notice that the stopping conditions in the definition of the family of low density cubes $\LD^k(R)$
involve the harmonic measure $\omega^{p_Q}$ for a suitable $Q\in\LD^{k-1}(R)$, instead of $\omega^{p_R}$.

\vv
The next lemma is one of the key steps for the proof of Theorem \ref{teo1}.

\begin{lemma}[Key Lemma]\label{lemclau}
Suppose that either the assumption (b) or (c) in Theorem \ref{teo1} holds. Suppose also that $\ve$ in \rf{eq1} is chosen small enough
and that $\delta\leq\ve$. Then for any $m\ge1$ we have
\begin{equation}\label{eqg**p45}
\sum_{k=1}^m \,\sum_{Q\in\LD^k(R)}\mu(Q)\lesssim_\ve\mu(R)
\end{equation}
and
\begin{equation}\label{eqg**p46}
\mu(B_L^m(R))\lesssim_\ve\frac1m\,\mu(R).
\end{equation}
\end{lemma}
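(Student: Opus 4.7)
The plan is to build, for each $Q$ in the iterated low-density family, a nonnegative bounded harmonic function $u_Q$ via the Approximation Lemma, to combine them through a random-signs construction into a single bounded harmonic test function $u$, and then to test hypothesis (c) or (b) against $u$. Note first that \eqref{eqg**p46} is an automatic consequence of \eqref{eqg**p45}: by construction every cube in $\LD^m(R)$ has an ancestor in each $\LD^k(R)$ for $1\leq k\leq m$, so $B_L^m(R)\subset B_L^k(R)$ for every such $k$, and hence $m\,\mu(B_L^m(R))\leq\sum_{k=1}^{m}\mu(B_L^k(R))=\sum_{k=1}^{m}\sum_{Q\in\LD^k(R)}\mu(Q)$. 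The real content is therefore \eqref{eqg**p45}.

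\textbf{Test functions and disjoint supports.} Let $\cG:=\bigcup_{k=1}^{m}\LD^k(R)$, and for each $Q\in\cG$ set $E_Q:=Q\setminus B_L(Q)$. Lemma \ref{lemld} applied with $Q$ in place of $R$ gives $\omega^{p_Q}(B_L(Q))\leq\delta\,\omega^{p_Q}(Q)$; since $\delta\leq\ve$, the hypothesis $\omega^{p_Q}(E_Q)\geq(1-\ve)\omega^{p_Q}(Q)$ of Lemma \ref{lemapprox} holds, producing $u_Q(x)=\int_{E_Q}f_Q\,d\omega^x$ with $f_Q$ supported in $E_Q$, $\|f_Q\|_\infty\lesssim 1$ when $n\geq 2$ (and $\lesssim|\log\ve|$ when $n=1$), together with
\[
\int_{V_Q}|\nabla u_Q|^2\,\dist(x,\partial\Omega)\,dx\;\gtrsim_\ve\;\mu(Q).
\]
A crucial structural observation is that the sets $\{E_Q\}_{Q\in\cG}$ are pairwise disjoint: if $Q_1\in\LD^j(R)$ and $Q_2\in\LD^k(R)$ with $j<k$ meet, then tracing the chain of iterated stopping-ancestors of $Q_2$ inside $\bigcup_i \LD^i(R)$ forces $Q_2$ to lie inside some cube of $\LD(Q_1)$, so $Q_2\subset B_L(Q_1)=Q_1\setminus E_{Q_1}$ and thus $E_{Q_1}\cap E_{Q_2}=\varnothing$.

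\textbf{Argument under (c) via random signs.} Attach i.i.d.\ signs $\sigma_Q\in\{-1,+1\}$, $Q\in\cG$, and set $f:=\sum_{Q}\sigma_Q f_Q$, $u(x):=\int f\,d\omega^x=\sum_Q\sigma_Q u_Q(x)$. By the disjointness of the supports of the $f_Q$'s, $|f(y)|\leq\max_Q\|f_Q\|_\infty$ for every $y$, so $u$ is a bounded harmonic function on $\Omega$ with $\|u\|_\infty\lesssim 1$ (resp.\ $\lesssim|\log\ve|$) uniformly over every sign configuration. Fix a ball $B^*$ centered on $\partial\Omega$ of radius $\approx\ell(R)$ containing all $V_Q$ with $Q\subset R$. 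Hypothesis (c) gives $\int_{B^*}|\nabla u|^2\dist\,dx\lesssim_\ve\mu(R)$ deterministically, while averaging over $\sigma$ kills the cross-terms and leaves
\[
\mathbb{E}\int_{B^*}|\nabla u|^2\dist\,dx\;=\;\sum_{Q\in\cG}\int_{B^*}|\nabla u_Q|^2\dist\,dx\;\geq\;\sum_{Q\in\cG}\int_{V_Q}|\nabla u_Q|^2\dist\,dx\;\gtrsim_\ve\;\sum_{Q\in\cG}\mu(Q).
\]
Combining the two yields $\sum_{Q\in\cG}\mu(Q)\lesssim_\ve\mu(R)$, which is \eqref{eqg**p45}.

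\textbf{The main obstacle is (b).} Under $\ve$-approximability one would test the same $u$ with a small approximation parameter $\ve'\ll 1$, producing $\varphi$ with $\|u-\varphi\|_\infty<\ve'$ and $\int_{B(x,r)}|\nabla\varphi|\,dy\lesssim r^n$. The directional bound $\nabla u_Q\cdot e_Q\gtrsim r(V_Q)^{-1}$ forces $\gtrsim 1$ oscillation of $u$ (hence of $\varphi$) along the $e_Q$-axis inside $V_Q$, and Fubini along these lines gives $\int_{V_Q}|\nabla\varphi|\,dy\gtrsim r(V_Q)^n\approx_\ve\mu(Q)$; summing and comparing with the $L^1$ Carleson bound $\int_{B^*}|\nabla\varphi|\,dy\lesssim\ell(R)^n$ yields \eqref{eqg**p45}. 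The delicate point --- and the main obstacle --- is that random signs no longer diagonalize the cross-terms in the $L^1$ setting, so one must verify cube by cube that the harmonic-measure tails of $u_{Q'}$, $Q'\neq Q$, on $V_Q$ do not destroy the oscillation gap; this is handled using Bourgain-type harmonic-measure estimates together with the quantitative harmonic-measure decay between $Q$ and its low-density ancestors forced by the stopping condition.
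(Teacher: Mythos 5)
Your reduction of \rf{eqg**p46} to \rf{eqg**p45}, the disjointness argument for the $E_Q$, and the verification of the hypothesis of Lemma \ref{lemapprox} via Lemma \ref{lemld} all match the paper and are correct.

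Under hypothesis (c), your random-signs argument is a genuinely different and cleaner route than the paper's. The paper takes the deterministic all-positive function $u=\sum_{Q\in\cG}u_Q$ and proves the pointwise lower bound $\nabla u(x)\cdot e_Q\gtrsim 1/r(V_Q)$ on each $V_Q$ (its \rf{eq22}), which requires controlling the cross-contribution $\nabla(u-u_Q)$. You avoid this entirely: randomizing signs with disjoint supports diagonalizes the $L^2$ inner product, so $\mathbb{E}\int_{B^*}|\nabla u|^2\dist\,dx=\sum_Q\int_{B^*}|\nabla u_Q|^2\dist\,dx$, and the deterministic $L^\infty$ bound on $u$ (uniform in the sign choice) gives the Carleson upper bound. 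This buys you a shorter proof for (c) at the cost of not producing the deterministic gradient lower bound \rf{eq22}, which the paper deliberately arranges to have available for case (b).

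Under hypothesis (b), your argument is incomplete. You correctly identify that random signs do not diagonalize the $L^1$ Carleson condition and that one needs the oscillation of the test function (not just of $u_Q$) to survive on $V_Q$. But you then only gesture at ``Bourgain-type harmonic-measure estimates together with the quantitative harmonic-measure decay between $Q$ and its low-density ancestors.'' This is not the mechanism the paper uses, nor is it clear it closes the gap: decomposing $u-u_Q$ into individual $u_{Q'}$ and estimating each tail on $V_Q$ is awkward because there can be many ancestors $Q'$ contributing at every scale. The paper's actual observation is simpler and sharper, and you should supply it. Take all signs $+1$ (there is no reason to randomize in case (b)): then $u-u_Q\geq0$ is harmonic, bounded by $c|\log\ve|$, and vanishes on the boundary set $E_Q$ because the $f_{Q'}$, $Q'\neq Q$, are supported off $E_Q$. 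Since $\omega^{p_Q}(\partial\Omega\setminus E_Q)\lesssim\ve^\alpha$ (by Lemma \ref{lempq0} and Lemma \ref{lemld}, as in \rf{eqhm11}), one gets
\begin{equation*}
(u-u_Q)(p_Q)=\int(u-u_Q)\,d\omega^{p_Q}\leq\|u-u_Q\|_{L^\infty(\partial\Omega)}\,\omega^{p_Q}(\partial\Omega\setminus E_Q)\lesssim|\log\ve|\,\ve^\alpha,
\end{equation*}
and then Harnack on $2V_Q$ plus the interior gradient estimate yield $|\nabla(u-u_Q)|\lesssim|\log\ve|\,\ve^\alpha/r(V_Q)$ on $V_Q$, which for $\ve$ small is beaten by $\nabla u_Q\cdot e_Q\gtrsim 1/r(V_Q)$. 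This is \rf{eq22}, and it is exactly what your Poincar\'e/Fubini argument for $\varphi$ needs. Without this step the claim that ``the directional bound forces $\gtrsim1$ oscillation of $u$ inside $V_Q$'' is unsupported. Note also that once you have \rf{eq22} you may as well use the same deterministic $u$ for (c), so the random-signs device, while elegant, ends up being an optional shortcut for one of the two cases rather than part of a unified argument.
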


\begin{proof}
For $Q\subset\DD_\mu$, $Q\subset R$, we denote 
$$E_Q = Q\setminus B_L(Q).$$
By Lemma \ref{lempq0} and Lemma \ref{lemld} applied to $Q$,
\begin{align}\label{eqhm11}
\omega^{p_Q}(E_Q) & = \omega^{p_Q}(Q) - \omega^{p_Q}(B_L(Q)) \geq\\
\geq (1-\delta)\,\omega^{p_Q}(Q) &\geq (1-\delta)(1-c\,\ve^\alpha)\geq 1-c'\ve^\alpha.\nonumber
\end{align}
Hence, by Lemma \ref{lemapprox}, if $\ve$ is small enough and $\delta\leq\ve$, there exists a function $u_Q$ 
 on $\Omega$ and a non-negative Borel function $f_Q$ with\footnote{In the case $n\geq2$ this can be improved to $f_Q\leq c\,\chi_{E_Q}$, but we will not use this.} $f_Q\leq c\,|\log\ve|\,\chi_{E_Q}$
 such that
$$u_Q(x) = \int_{E_Q} f_Q\,d\omega^x$$
satisfying, for some unit vector $e_Q\in\R^{n+1}$, 
$$\nabla u_Q(x)\cdot e_Q\geq c\,\frac1{r(V_Q)}\quad\mbox{ for all $x\in V_Q$,}$$
and so that
$$\int_{V_Q} |\nabla u_Q(x)|^2\,\dist(x,\partial\Omega)\,dx\geq c\,r(V_Q)^n.$$

Notice that the set $E_Q$ is disjoint form the low density cubes from $\LD(Q)$, so that  by construction
the sets $E_Q$,  $Q\in\LD^k(R)$, $k\geq1$, are pairwise disjoint.
This implies that the function
$$u := \sum_{k=1}^m \,\sum_{Q\in\LD^k(R)} u_Q$$
is uniformly bounded by $c\,|\log\ve|$ on $\Omega$. Indeed, by the definitions of the functions $u$ and $u_Q$,
\begin{align}\label{eqhm12}
u(x) = \int \sum_{k=1}^m \,\sum_{Q\in\LD^k(R)} f_Q\,\chi_{E_Q}\,d\omega^x & \\
\leq c\,|\log\ve|\sum_{k=1}^m \,\sum_{Q\in\LD^k(R)}\hm^{x}(E_Q) &\leq  c\,|\log\ve|.\nonumber
\end{align}
Remark that the latter estimate also holds with $u$ replaced by $u-u_Q$. 

We claim that for all $x\in V_Q$, for $Q\in\LD^k(R)$, $k\geq1$,
\begin{equation}\label{eq22}
|\nabla u(x)|\geq \nabla u(x)\cdot e_Q\gtrsim \frac1{r(V_Q)}.
\end{equation}
To show this, we set
$$\nabla u(x)\cdot e_Q\geq \nabla u_Q(x)\cdot e_Q - |\nabla(u-u_Q)(x)| \geq \frac{c}{r(V_Q)} - |\nabla(u-u_Q)(x)|.$$
Since $u-u_Q$ is harmonic and positive in $3V_Q$, we have
$$|\nabla(u-u_Q)(x)| \lesssim \frac1{r(V_Q)}\|u-u_Q\|_{L^\infty(2V_Q)}\approx
\frac1{r(V_Q)}(u-u_Q)(p_Q).$$
Now, since $u-u_Q$ is harmonic in $\Omega$ and vanishes in $ E_Q$ we obtain
$$(u-u_Q)(p_Q) = \int (u-u_Q)\,d\omega^{p_Q} \leq \|u-u_Q\|_{L^\infty(\partial\Omega)}\,\omega^{p_Q}(\partial\Omega\setminus E_Q) \leq C\,|\log\ve|\,\ve^\alpha,$$
where in the last inequality we used \rf{eqhm11} along with \rf{eqhm12} for $u-u_Q$.
Hence
$$\nabla u(x)\cdot e_Q\geq \frac{c}{r(V_Q)} - \frac{C\,|\log\ve|\,\ve^\alpha}{r(V_Q)}$$
and our claim follows if $\ve$ is taken small enough.
\vv

$\bullet$ Suppose first that the assumption (c) in Theorem \ref{teo1} holds.
From the claim \rf{eq22} and the fact that the sets $V_Q$, $Q\in\LD^k(R)$, $k\geq1$, are pairwise disjoint (or at least, have bounded overlap), we get
\begin{align}\label{eqalof25}
\int_{B(R)} |\nabla u|^2\,\dist(x,\partial\Omega)\,dx & \gtrsim \sum_{k=1}^m \,\sum_{Q\in\LD^k(R)}
\int_{V_Q} |\nabla u(x)|^2\,\dist(x,\partial\Omega)\,dx\\
& \gtrsim \sum_{k=1}^m \,\sum_{Q\in\LD^k(R)}r(V_Q)^n
\approx_\ve \sum_{k=1}^m \,\sum_{Q\in\LD^k(R)}\mu(Q),\nonumber
\end{align}
where $B(R)$ is some big ball concentric with $R$, with radius comparable to $\ell(R)$, which contains
the sets $V_Q$, $Q\in\LD^k(R)$, $k=1,\ldots,m$.
Then, from \rf{eqhip1} we derive
$$\sum_{k=1}^m \,\sum_{Q\in\LD^k(R)}\mu(Q)\lesssim_\ve\mu(R),$$
which yields the first assertion of the lemma in this case.

\vv
$\bullet$ Suppose now that the hypothesis (b) in Theorem \ref{teo1} holds, i.e., that for all $\varepsilon_0  > 0$
every bounded harmonic function on $\Omega$ is $\varepsilon_0$-approximable. 
So, for some $\ve_0>0$ small enough to be chosen below, let 
 $\varphi \in W^{1,1}_{\rm {loc}}(\Omega)$ such that
$\|u - \varphi\|_{L^{\infty}(\Omega)} < \varepsilon_0$ and 
\begin{equation}\label{eqcc23}
\int_{B(R)} |\nabla \varphi(y)|\, dy \leq C\,\mu(R),
\end{equation}
where $B(R)$ is as above.
We claim that
\begin{equation}\label{claimcla}
\int_{V_Q}|\nabla\vphi(y)|\,dy\gtrsim_\ve\ell(Q)^n\quad\mbox{ for all $Q\in\LD^k(R)$, $k=1,\ldots,m$.}
\end{equation}
Indeed,  for each such $Q$ consider two balls $V_Q^1,V_Q^2\subset V_Q$ such that $r(V_Q^1) = r(V_Q^1) =\frac1{100} r(V_Q)$, and so that $V^2_Q=\frac{r(V_Q)}{10}e_Q+V^1_Q$ (i.e., $V_Q^2$ is the translation of $V_Q^1$ by the vector $\frac{r(V_Q)}{10}e_Q$). Then, by a change of variable, the mean value theorem, and \rf{eq22} it follows  that 
\begin{align*}
\avint_{V^2_Q} u(y)\,dy - \;\avint_{V^1_Q} u(y)\,dy &= \;\avint_{V^1_Q} \bigl( u(y+\tfrac {r(V_Q)}{10}e_Q) - u(y)\bigr)
\,dy\\
&\geq c \,r(V_Q)\,\min_{y\in V_Q} \bigl[\nabla u(y)\cdot e_Q\bigr]\gtrsim 1.
\end{align*}
Hence, if $\ve_0$ is small enough, then we also have
$$\avint_{V^2_Q} \vphi(y)\,dy - \;\avint_{V^1_Q} \vphi(y)\,dy \gtrsim 1.$$
Then  \rf{claimcla} is an immediate consequence of the Poincar\'e inequality applied to the ball $V_Q$.

Arguing as in \rf{eqalof25}, from   
\rf{claimcla} and \rf{eqcc23} we deduce
$$
\sum_{k=1}^m \,\sum_{Q\in\LD^k(R)}\mu(Q) \lesssim_\ve
\int_{B(R)} |\nabla \varphi(y)| dy\lesssim \mu(R),$$
which completes the proof of the first assertion of the lemma.
\vvv

The second estimate in the lemma follows from the fact that if $Q\in B_L^m(R)$, then $x$ belongs to $m$ different cubes $Q\in\LD^k(R)$, $k=1,\ldots,m$, so that
$$\sum_{k=1}^m \,\sum_{Q\in\LD^k(R)}\chi_Q(x) = m,$$
and by Chebyshev,
$$\mu(B_L^m(R))\leq \frac1m\,\sum_{k=1}^m \,\sum_{Q\in\LD^k(R)}\mu(Q)\lesssim_\ve \frac1m\,\mu(R).$$
\end{proof}
\vv

\begin{rem}\label{rem***}
The preceding lemma also holds if we assume that either the assumption (b) or (c) in Theorem \ref{teo1} is 
only satisfied  by functions $u \in C(\overline \Omega)$ which are bounded and harmonic in $\Omega$. 
The proof is almost the same. Indeed, consider a finite subfamily $\LL\subset \bigcup_{k=0}^m\LD^k(R)$.
For each $Q\in\LL$, set
$$\wt E_Q = \tfrac34B_Q\setminus \bigcup_{P\in\LL:P\subsetneq Q} P.$$
Arguing as in \rf{eqhm11}, we deduce that
$\omega^{p_Q}(\wt E_Q)  \geq 1-c\,\ve^\alpha$.
Consider a continuous function $\vphi_Q$ such that $0\leq \vphi_Q\leq 1$ with
$$\vphi_Q(x)=\left\{\begin{array}{ll}
1& \quad\mbox{if $x\in \wt E_Q$,}\\
\\
0& \quad\mbox{if $x\in (\frac45 B_Q)^c \cup \bigcup_{P\in\LL:P\subsetneq Q} \frac45B_P$.}
\end{array}\right.
$$
Set
$$\wt u_Q = \int \vphi_Q\,g_Q\,d\omega^x,$$
where $g_Q$ is defined in \rf{deggg*} and \rf{deggg***}. Since both $\vphi_Q$ and $g_Q$ are continuous in $\overline\Omega$, it follows easily that
$\wt u_Q\in C_0(\overline\Omega)$. Consider the function
$\wt u:=\sum_{Q\in\LL} \wt u_Q.$
This satisfies
$$
\wt u(x) = \int \sum_{Q\in\LL} \vphi_Q\,g_Q\,d\omega^x
\leq c\,|\log\ve|\sum_{Q\in\LL} \int  \vphi_Q\,d\omega^x\leq  c\,|\log\ve|,
$$
because $\sum_{Q\in\LL}   \vphi_Q|_{\partial\Omega}\leq 1$ by construction. The same arguments in the proof of Lemma \ref{lemclau}, with $u$ replaced by
$\wt u$, show then that
$$\sum_{Q\in\LL}\mu(Q)\lesssim_\ve\mu(R),$$
with the implicit constant independent of $\#\LL$. So \rf{eqg**p45} holds and, also, \rf{eqg**p46}.
\end{rem}
\vv


\subsection{The packing condition of the family $\ttt(R_0)$}

\begin{lemma}\label{lemcarleson}
There exists a constant $C$ such that
for any $Q_0\in\DD_\mu(R_0)$,
\begin{equation}\label{eqpack00}
\sum_{R\in\ttt(R_0):R\subset Q_0} \mu(R)\leq C\,\mu(Q_0).
\end{equation}
\end{lemma}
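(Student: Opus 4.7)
My plan is to reduce the packing to a uniform estimate on root tops and then bound that via an adapted version of Lemma \ref{lemclau} combined with a geometric iteration in the number of HD-stopping events. For the reduction, note that for any $Q_0 \in \DD_\mu(R_0)$ the tops of $\ttt(R_0)$ contained in $Q_0$ are organized under finitely or countably many maximal, pairwise disjoint tops $S_1, S_2, \ldots \subset Q_0$ (with $S_1 = Q_0$ if $Q_0 \in \ttt(R_0)$), and $\sum_j \mu(S_j) \leq \mu(Q_0)$. So it suffices to show $\sum_{R' \in \ttt(R_0),\,R'\subset R}\mu(R') \leq C\,\mu(R)$ for every $R \in \ttt(R_0)$, with a constant independent of $R$ and $R_0$. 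Since the construction of $\ttt$ is local, this reduces to showing $\sum_{R' \in \ttt(R)}\mu(R') \leq C\mu(R)$ for an arbitrary cube $R$.

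The key ingredient is an adapted Key Lemma. Let $\ttt^L(R)$ denote the strict descendants of $R$ in $\ttt(R)$ reached by a sequence of pure LD-transitions, meaning that every intermediate top on the path from $R$ down is the child of a maximal cube of the $\LD$ family of the preceding top. I claim
$$\sum_{R' \in \ttt^L(R)} \mu(R') \leq C_\varepsilon\,\mu(R).$$
This is proved by rerunning the proof of Lemma \ref{lemclau} with $\bigcup_k \LD^k(R)$ replaced by $\ttt^L(R)$: if $R''\subsetneq R'$ with both in $\ttt^L(R)$, then by the tree construction $R''$ is covered by a maximal $\LD(R')$-cube, whence $R'' \subset B_L(R')$ and the sets $E_{R'} := R' \setminus B_L(R')$ are pairwise disjoint; Lemmas \ref{lempq0} and \ref{lemld} still give $\omega^{p_{R'}}(E_{R'}) \geq 1 - c\varepsilon^\alpha$; the test function $u := \sum_{R' \in \ttt^L(R)} u_{R'}$ produced by Lemma \ref{lemapprox} satisfies $\|u\|_{L^\infty(\Omega)} \lesssim |\log\varepsilon|$ by disjointness of the $E_{R'}$'s; and the Harnack-plus-gradient argument of Lemma \ref{lemclau} gives $|\nabla u| \gtrsim 1/r(V_{R'})$ on each $V_{R'}$. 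Hypothesis (b) or (c) of Theorem \ref{teo1} then yields the displayed bound.

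With this in hand, I stratify $\ttt(R)$ by HD-depth. Denote by $\wt\HD(R')$ the set of those cubes in $\wt\sss(R')$ whose parents belong to $\HD(R')$. Set $P_0 := \{R\}$, $S_h := \bigcup_{R'' \in P_h}(\{R''\} \cup \ttt^L(R''))$, and $P_{h+1} := \bigcup_{R'' \in S_h}\wt\HD(R'')$. Every top of $\ttt(R)$ lies in exactly one $S_h$, classified by the number of HD-transitions along its ancestry. Letting $M_h := \sum_{R'' \in P_h}\mu(R'')$, the adapted Key Lemma gives $\sum_{R'' \in S_h}\mu(R'') \leq (1 + C_\varepsilon) M_h$, and Lemma \ref{lemhd} then yields
$$M_{h+1} = \sum_{R'' \in S_h}\mu(B_H(R'')) \lesssim \frac{1}{A}\sum_{R''\in S_h}\mu(R'') \leq \frac{c_H(1 + C_\varepsilon)}{A}\,M_h.$$
Choosing $A$ so large that $\lambda := c_H(1 + C_\varepsilon)/A < 1$, iteration gives $M_h \leq \lambda^h\,\mu(R)$ and
$$\sum_{R''\in\ttt(R)}\mu(R'') \leq (1 + C_\varepsilon)\sum_{h \geq 0}\lambda^h\,\mu(R) = \frac{1 + C_\varepsilon}{1 - \lambda}\,\mu(R),$$
which combined with the first paragraph yields \eqref{eqpack00}.

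The main obstacle is the adapted Key Lemma of the second paragraph. Lemma \ref{lemclau} is stated for the iteration $\LD^k(R)$, whose $k$-th step uses the corkscrew point $p_Q$ of a maximal LD-cube $Q \in \LD^{k-1}(R)$, whereas in the tree the LD-stopping of a new top $S$ uses $p_S$ with $S$ a \emph{child} of the corresponding maximal LD-cube. For a very irregular boundary these base points need not be linked by a uniformly short Harnack chain, so Lemma \ref{lemclau} cannot simply be quoted as a black box. However, the proof of Lemma \ref{lemclau} uses $\LD^k(R)$ only through pairwise disjointness of the $E_Q$'s, the uniform $\omega^{p_Q}$-mass bound on $\partial\Omega \setminus E_Q$, and an additive $L^\infty$-bound on the sum $u=\sum u_Q$. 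All three properties carry over to $\ttt^L(R)$, so the argument transfers with no essential change, and Stage 3 then becomes a routine geometric series.
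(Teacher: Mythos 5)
Your proof is correct and runs along essentially the same lines as the paper's: bound the HD-side packing via Lemma \ref{lemhd}, the LD-side via a Key-Lemma argument, and combine (your explicit geometric series in HD-depth is an equivalent bookkeeping of the paper's disjointness argument over the sets $R\setminus\bigcup_{P\in\nex(R)}P$). Where you genuinely depart, and in my view clarify a delicate point, is the treatment of the Key Lemma. The paper proves Lemma \ref{lemclau} for the auxiliary iteration $\LD^k(R)$ and then asserts the inclusion $\nex(R)\subset\bigcup_{k\geq0}\bigcup_{Q\in\LD^k(R)}\HD(Q)$. As you correctly observe, in the $\LD^k$ iteration the next stopping level inside a maximal LD cube $Q$ is defined through $\omega^{p_Q}$, whereas in the actual tree it is defined through $\omega^{p_T}$ with $T$ a \emph{child} of $Q$ (the new top); in a general corkscrew domain, $p_Q$ and $p_T$ need not be linked by a Harnack chain, so the two families of stopping cubes need not coincide and the asserted inclusion is not literal. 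Your fix — re-running the argument of Lemma \ref{lemclau} directly on the subtree $\ttt^L(R)$ of pure-LD tops, after checking that the pairwise disjointness of the $E_{R'}$, the lower bound $\omega^{p_{R'}}(E_{R'})\geq 1-c\ve^\alpha$, and the $L^\infty$-control of $\sum_{R'} u_{R'}$ all persist — is exactly what the argument needs, and sidesteps the inclusion entirely. The only cosmetic gap is that (as in Remark \ref{rem***}) one should work with a finite subfamily of $\ttt^L(R)$ and let it exhaust, to ensure the test function is admissible; this is standard and you implicitly inherit it.
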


\begin{proof}
First note that it is enough to prove the lemma assuming that $Q_0\in\ttt(R_0)$. Indeed, given any arbitrary $Q_0\in\DD_\mu(R_0)$,
we consider the family $\cM$ of maximal cubes from $\ttt(R_0)\cap \DD_\mu(R_0)$, and  apply \rf{eqpack00} to each $S\in\cM$ to
obtain
$$\sum_{R\in\ttt(R_0):R\subset Q_0} \mu(R)= \sum_{S\in\cM} \,\sum_{R\in\ttt(R_0):R\subset S} \mu(R) \leq C\,\sum_{S\in\cM}\mu(S)\leq C\,\mu(R).$$

Therefore we assume that $Q_0\in\ttt(R_0)$. We denote
$$\wt\ttt_H(Q_0) := \DD_\mu(Q_0)\cap\ttt_H(R_0) \qquad
\mbox{and}\qquad \wt \ttt_L(Q_0) := \DD_\mu(Q_0)\cap\ttt_L(R_0).$$
We split $\DD_\mu(Q_0)$ into trees whose roots are all the cubes from 
$\{Q_0\}\cup \wt \ttt_H(Q_0)$.
That is, for each $R\in\{Q_0\}\cup \wt \ttt_H(Q_0)$, we consider the tree $\wt\tree(R)$ formed by the
cubes from $\DD_\mu(R)$ which are not contained in any other cube from $\wt \ttt_H(Q_0)\cap\DD_\mu(R)$ different from $R$. So we have the partition
\begin{equation}\label{eqpart}
\DD_\mu(Q_0) = \bigcup_{R\in\{Q_0\}\cup\wt \ttt_H(Q_0)}\wt\tree(R).
\end{equation}
Also, we denote by $\nex(R)$ the family of the maximal cubes from $\wt \ttt_H(Q_0)\cap\DD_\mu(R)$ different from $R$.

By construction, for each $R\in\{Q_0\}\cup\wt\ttt_H(Q_0)$ (taking into account that $Q_0\in\ttt(R_0)$) we have
$$\nex(R)\subset \bigcup_{k\geq0}\,\bigcup_{Q\in\LD^k(R)}\HD(Q).$$
Then, by Lemmas  \ref{lemhd} and \ref{lemclau},
$$\sum_{P\in\nex(R)}\mu(P)\leq \sum_{k\geq0}\,\sum_{Q\in\LD^k(R)}\sum_{S\in\HD(Q)}\mu(S)
\leq\frac CA\,\sum_{k\geq0}\,\sum_{Q\in\LD^k(R)}\mu(Q) \leq \frac CA\,\mu(R).$$
So, assuming $A$ big enough, we have
$$\sum_{P\in\nex(R)}\mu(P)\leq  \frac12\,\mu(R),$$
which is equivalent to saying that
$$\mu\biggl(R\setminus\bigcup_{P\in\nex(R)}P\biggr)\geq  \frac12\,\mu(R).$$
Since the sets $R\setminus\bigcup_{P\in\nex(R)}P$, with $R\in\wt\ttt_H(Q_0)$ are pairwise disjoint, we 
obtain
\begin{equation}\label{eqtoph}
\sum_{R\in\wt\ttt_H(Q_0)} \mu(R) \leq 2 \sum_{R\in\wt\ttt_H(Q_0)}\mu\biggl(R\setminus\bigcup_{P\in\nex(R)}P\biggr) \leq 2\,\mu(Q_0).
\end{equation}

Now it remains to bound the sum $\sum_{R\in\wt\ttt_L(Q_0)} \mu(R)$.
In view of \rf{eqpart} we can split this sum as follows:
$$\sum_{R\in\wt\ttt_L(Q_0)} \mu(R) = \sum_{S\in\{Q_0\}\cup\wt\ttt_H(Q_0)} \sum_{R\in\wt\tree(S)\cap\wt\ttt_L(Q_0)}
\mu(R).$$
By construction, since $Q_0\in \ttt(R_0)$, for each $S\in\{Q_0\}\cup \wt\ttt_H(Q_0)$  we have
$$\sum_{R\in\wt\tree(S)\cap\wt\ttt_L(Q_0)}\mu(R) \leq\sum_{k\geq0}\,\sum_{Q\in\LD^k(S)}\mu(Q),$$
which does not exceed $C\,\mu(S)$ by Lemma \ref{lemclau} again,  so  that
$$\sum_{R\in\wt\ttt_L(Q_0)} \mu(R)\leq C \sum_{S\in\{Q_0\}\cup\wt\ttt_H(Q_0)}\mu(S)\leq C\,\mu(Q_0),$$
by \rf{eqtoph}. This completes the proof of the lemma.
\end{proof}

\vv


\subsection{Conclusion of the proof of Proposition \ref{propo11}}

If $\diam(\partial\Omega)<\infty$, then we choose $R_0=\partial\Omega$ and we define
$\ttt = \ttt(R_0)$.
By Lemmas \ref{lemfac84} and \ref{lemcarleson}, the family $\ttt$ satisfies the properties required in Proposition 
\ref{propo11}.

\vv In the case when $\partial\Omega$ is not bounded we apply a technique described in p.\ 38 of \cite{DS1}: we consider a family of cubes $\{R_j\}_{j\in J}\in\DD_\mu$ which are pairwise disjoint, whose union is all of $\supp\mu$, and which have the property 
that for each $k$ there at most $C$ cubes from $\DD_{\mu,k}$ not contained in any cube $R_j$. For each
$R_j$ we construct a family $\ttt(R_j)$ analogous to $\ttt(R_0)$.
Then we set 
$$\ttt=\bigcup_{j\in J} \ttt(R_j) \cup \BZ,$$
where $\BB\subset\DD_\mu$ is the family of cubes which are not contained in any cube $R_j$, $j\in J$. 
One can easily check that the family $\ttt$ satisfies all the properties from Proposition \ref{propo11}. See p.\ 38 of \cite{DS1}
for the construction of the family $\{R_j\}$ and additional details.

\vv


\section{Proof of Theorem \ref{teo1} for bounded uniform domains}\label{sec5}

In the rest of the paper we allow all the constants $C$ and other implicit constants to depend on the parameter $\ve$
from Subsection \ref{sec:aproximation}.

In this section we will complete the proof of Theorem \ref{teo1} in the special case when $\Omega$ is a bounded uniform domain.
For this type of domain the proof is simpler and more transparent than in the general case  because the Harnack chain condition holds in a uniform domain and for that reason
 we think  it is useful to  first give the proof in this special case.  
If a uniform domain has  
 AD-regular boundary (or more generally, it satisfies the CDC), then  by the Harnack chain condition 
 the harmonic measure  $\omega^{p_Q}$ in $\Omega$ is doubling, with the doubling constant bounded above independently of the pole $p_Q$ (see \cite{AiHi}). 
Then by the theorem of Hofmann, Martell and Uriarte-Tuero in \cite{HMU}, the uniform rectifiability of $\partial\Omega$ is equivalent
to the $A_\infty(\mu)$ property of $\omega^p$, for a fixed $p\in\Omega$, and we will use this criterion to prove 
Theorem \ref{teo1} in the case of uniform domains.

\subsection{The set $R\setminus B_L^m(R)$}

 In the rest of this section we assume that  $\Omega$ is a bounded uniform domain with $n$-AD-regular boundary.

For a fixed $R\in\DD_\mu$, we choose $m$ big enough so that
\begin{equation}\label{eqak941}
\mu(R\setminus B_L^m(R)) = \mu(R) -  \mu(B_L^m(R))
\geq \mu(R) - \frac Cm \,\mu(R)\geq \frac12\,\mu(R),
\end{equation}
by applying Lemma \ref{lemclau}.
\vv

\begin{lemma}\label{lemg}
There is a function $g\in L^\infty(\omega^{p_R}|_{R\setminus B_L^m(R)})$ such that
$$\frac1{\mu(R)}\,\mu|_{R\setminus B_L^m(R)} = g\,\omega^{p_R}|_{R\setminus B_L^m(R)},$$
with 
$$\|g\|_{L^\infty(\omega^{p_R}|_{R\setminus B_L^m(R)})} \leq C(\delta,m).$$
\end{lemma}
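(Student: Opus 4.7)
The goal is to show that $\mu|_F \ll \omega^{p_R}|_F$, where $F := R \setminus B_L^m(R)$, with Radon--Nikodym density $g$ whose $L^\infty$ norm is bounded by a constant $C(\delta, m)$. The plan is to establish the pointwise lower bound
\begin{equation*}
\omega^{p_R}(Q) \;\geq\; c(\delta, m)\,\frac{\mu(Q)}{\mu(R)} \qquad \text{for every } Q\in\DD_\mu \text{ with } Q\subset R \text{ and } Q\cap F\neq\varnothing,
\end{equation*}
from which the lemma follows by a standard dyadic covering argument: given a compact $K\subset F$ and an open $U\supset K$, one covers $K$ by a finite disjoint family of dyadic cubes contained in $U$ (each meeting $F$), sums the pointwise bound, and then lets $U$ shrink to $K$ using outer regularity of $\omega^{p_R}$ together with inner regularity of $\mu$.

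The pointwise bound is proved by fixing $x \in Q \cap F$ and considering the (possibly trivial) maximal chain $R = Q_0 \supsetneq Q_1 \supsetneq \dots \supsetneq Q_k$ of cubes containing $x$ with $Q_j \in \LD(Q_{j-1})$; by definition of $F$, $k \leq m-1$ and no cube of $\LD(Q_k)$ contains $x$. Three standard tools available in bounded uniform domains with AD-regular boundary are then invoked: (a) the doubling of the harmonic measures $\omega^{p_{Q_j}}$ on $\partial\Omega$, granted by the Harnack chain condition and AD-regularity (cf.\ \cite{AiHi}); (b) the change-of-pole estimate $\omega^{p_R}(E) \approx \omega^{p_R}(P) \cdot \omega^{p_P}(E)$ valid for $E \subset P \subset R$ with adequate scale separation; and (c) the normalization $\omega^{p_{Q_j}}(Q_j) \approx 1$ supplied by Lemma \ref{lempq0}.

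Combining the maximality of $Q_j \in \LD(Q_{j-1})$ with doubling of $\omega^{p_{Q_{j-1}}}$ yields $\omega^{p_{Q_{j-1}}}(Q_j) \approx \delta\,\mu(Q_j)/\mu(Q_{j-1})$: the upper bound is exactly the low-density condition, while for the lower bound the dyadic parent $\wh Q_j$, when strictly smaller than $Q_{j-1}$, fails the low-density condition, and doubling transfers the resulting estimate back to $Q_j$. Iterating along the chain via change of pole gives the intermediate estimate $\omega^{p_R}(Q_j) \approx \delta^j\,\mu(Q_j)/\mu(R)$. The bound for $Q$ itself then follows by cases: if $Q \subset Q_k$, the condition $x \notin B_L(Q_k)$ together with maximality forces $\omega^{p_{Q_k}}(Q)/\mu(Q) > \delta/\mu(Q_k)$, and change of pole combined with the chain estimate gives $\omega^{p_R}(Q) \gtrsim \delta^{k+1}\mu(Q)/\mu(R) \geq \delta^m\,\mu(Q)/\mu(R)$; the cases where $Q$ coincides with some $Q_j$ or sits strictly between consecutive chain cubes are handled by the same maximality-plus-change-of-pole argument.

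The principal technical obstacle I anticipate is the degenerate case where some $Q_j$ is a dyadic child of $Q_{j-1}$: here $\wh Q_j = Q_{j-1}$ and maximality alone does not supply a lower bound on $\omega^{p_{Q_{j-1}}}(Q_j)$, so one must work directly with the doubling property of $\omega^{p_{Q_{j-1}}}$ on $\partial\Omega$ and with the Harnack chains connecting the corkscrew points of $Q_{j-1}$ and $Q_j$ at comparable scales. All constants that appear will depend only on $\delta$, $m$, the AD-regularity constant of $\partial\Omega$, and the uniform/doubling constants.
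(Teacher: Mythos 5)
Your proposal is correct and follows essentially the same route as the paper: the core is the same chain $R=Q_0,Q_1,\dots,Q_j$ of iterated low-density stopping cubes containing $x$, together with the change-of-pole and doubling estimates to compare $\omega^{p_R}$ with $\mu$ along the chain, leading to $\omega^{p_R}(Q)\gtrsim_{\delta,m}\mu(Q)/\mu(R)$. The only surface-level difference is that you pass from the pointwise cube estimate to the density bound via a dyadic covering argument while the paper invokes the Lebesgue differentiation theorem; these are interchangeable, and the technical point you flag (the case $\wh Q_j=Q_{j-1}$) is indeed handled by doubling of $\omega^{p_{Q_{j-1}}}$ exactly as you say, so the proof goes through.
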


\begin{proof}
By the Lebesgue differentiation theorem, it is enough to show that, given any $x\in R\setminus B_L^m(R)$, for any $Q\in\DD_\mu$ such that $x\in Q\subset R$, with $\ell(Q)$ small enough,
$$\mu(Q) \leq C(\delta,m) \,\omega^{p_R}(Q)\,\mu(R).$$

For such point $x$, there exists $0\leq j\leq m-1$ and some cube $Q_j\in\LD^j(R)$ such that
$$x\in Q_j\quad\mbox{ but }\quad x\not\in\bigcup_{P\in\LD(Q_j)} P.$$
Consider now the cubes $R=Q_0,Q_1,\ldots,Q_j$ such that $x\in Q_k\in\LD^k(R)$ for $0\leq k\leq j$.
To simplify notation, we denote $p_k=p_{Q_k}$. For any $Q\in\DD_\mu$
such that $Q\subset Q_j$, we have
\begin{equation}\label{eqcad22}
\frac{\omega^{p_j}(Q)}{\mu(Q)}\geq \delta \,\frac{\omega^{p_j}(Q_j)}{\mu(Q_j)} \approx \frac{\delta}{\mu(Q_j)}.
\end{equation}
Analogously, by the definition of $\LD(Q_k)$ and the doubling property of $\omega^{p_k}$,
\begin{equation}\label{eqcad23}
\frac{\omega^{p_k}(Q_{k+1})}{\mu(Q_{k+1})}\approx \delta \,\frac{\omega^{p_k}(Q_k)}{\mu(Q_k)}
\approx \frac{\delta}{\mu(Q_k)}.
\end{equation}

Notice that for any $k$, since $\Omega$ is a uniform domain with AD-regular boundary,
$$\omega^{p_R}(Q) \approx \omega^{p_{j}}(Q)\,\omega^{p_R}(Q_{j}),$$
and also
$$\omega^{p_R}(Q_k) \approx \omega^{p_{k-1}}(Q_k)\,\omega^{p_R}(Q_{k-1}).$$
Therefore,
$$\omega^{p_R}(Q) \approx \omega^{p_{j}}(Q)\,\omega^{p_{j-1}}(Q_j)\,\omega^{p_{j-2}}(Q_{j-1})\ldots\omega^{p_{R}}(Q_1),$$
where the implicit constant is of the form $C^j\leq C^m$.
Plugging here the estimates \rf{eqcad22} and \rf{eqcad23} we get
$$\omega^{p_R}(Q) \gtrsim_m  \frac{\delta\,\mu(Q)}{\mu(Q_{j})}\frac{\delta\,\mu(Q_j)}{\mu(Q_{j-1})} \, \frac{\delta\,\mu(Q_{j-1})}{\mu(Q_{j-2})}\,\ldots
\frac{\delta\,\mu(Q_1)}{\mu(R)} = \delta^{j+1}\,\frac{\mu(Q)}{\mu(R)}\gtrsim_m  \delta^{m+1} \,\frac{\mu(Q)}{\mu(R)},$$
which proves the lemma.
\end{proof}


\vv


\vv
\subsection{The $A_\infty(\mu)$ property of $\omega^p$}
To show that $\omega^p$ satisfies the $A_\infty(\mu)$ property it is enough to prove that
 there exists
some constant $\tau>0$ such that, for each $R\in\DD_\mu$ and every $F\subset R$, 
\begin{equation}\label{eqainfy}
\omega^p(F)\leq \tau\,\omega^p(R)    \quad \;\Rightarrow\; \quad \mu(F)\leq \frac34\,\mu(R).
\end{equation}

By the martingale property of harmonic measure in uniform domains with $n$-AD-regular boundary, 
the left inequality in \rf{eqainfy} implies that
\begin{equation}\label{eqainfy2}
\omega^{p_R}(F)\approx\frac{\omega^{p_R}(F)}{\omega^{p_R}(R)} \approx \frac{\omega^{p}(F)}{\omega^{p}(R)}
\leq\tau.
\end{equation}
See \cite{AiHi} for further details (or \cite[Theorem 1.3]{MTo} for a more precise reference) regarding
this martingale property of harmonic measure in uniform domains.
Now we write 
\begin{equation}\label{eqainfy3}
\mu(F)\leq \mu(B_L^m(R)) + \mu(F \setminus B_L^m(R)) \leq \frac12\,\mu(R)  + \mu(F \setminus B_L^m(R)) ,
\end{equation}
taking into account \rf{eqak941}.
Recall also that, by Lemma \ref{lemg},
$$\frac1{\mu(R)}\,\mu|_{R\setminus B_L^m(R)} = g\,\omega^{p_R}|_{R\setminus B_L^m(R)},$$
with $\|g\|_{L^\infty(\omega^{p_R}|_{R\setminus B_L^m(R)})} \leq C(\delta,m)$.
Together with \rf{eqainfy2} this implies that
$$\mu(F \setminus B_L^m(R))\leq C(\delta,m)\,\mu(R)\,\omega^{p_R}(F \setminus B_L^m(R)) \leq C(\delta,m)\,\tau\,\mu(R)
\leq \frac14\,\mu(R),$$
if $\tau$ is assumed small enough. Plugging this into \rf{eqainfy3}, we obtain \rf{eqainfy}.

\vvv

\section{From the corona decomposition for harmonic measure to uniform rectifiability}\label{sec6}

In this section we will show that the existence of a corona decomposition  such as the one described
in Proposition \ref{propo11} implies the uniform rectifiability of $\partial\Omega$. We will
prove this by showing that the Riesz transform $\RR_\mu$ is bounded in $L^2(\mu)$, and then 
applying the main theorem of \cite{NToV}.

\vv
The precise result that we will prove is the following.

\begin{propo}\label{propo12}
Let $\Omega\subset\R^{n+1}$ be a corkscrew domain with $n$-AD-regular boundary. Denote by
$\mu$ the surface measure on $\partial\Omega$.
Suppose that $\mu$ admits a corona decomposition
$\DD_\mu=\bigcup_{R\in \ttt} \tree(R)$
so that the family $\ttt$ is a Carleson family, that is,
\begin{equation}\label{eqpack57}
\sum_{R\subset S: R\in\ttt}\mu(R)\leq C\,\mu(S)\quad \mbox{for all $S\in\DD_\mu$},
\end{equation}
and for each $R\in\ttt$ there exists a corkscrew point $p_R\in\Omega$ with 
$$c^{-1}\ell(R)\leq\dist(p_R,R)\leq \dist(p_R,\partial\Omega)\leq c\,\ell(R)$$
such that, for some fixed $\lambda_0>1$,
$$
\omega^{p_R}(\lambda_0 Q)\approx \frac{\mu(Q)}{\mu(R)}\quad\mbox{ for all $Q\in\tree(R)$,}$$
with the implicit constant uniform on $Q$ and $R$.
Then $\mu$ is uniformly rectifiable. 
\end{propo}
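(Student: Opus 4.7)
The plan is to deduce the uniform $n$-rectifiability of $\mu$ from the $L^2(\mu)$-boundedness of the Riesz transform $\RR_\mu$ via the main theorem of \cite{NToV}. By the Carleson packing condition \eqref{eqpack57}, a standard corona synthesis (good-$\lambda$ inequality or a David--Semmes-type summation over trees) reduces the $L^2(\mu)$-boundedness of $\RR_\mu$ to a uniform ``single tree'' estimate: for each $R\in\ttt$, one must produce a test function $b_R$ on $R$ and a suppression function $\Theta_R$ on $\partial\Omega$ so that the $\Theta_R$-suppressed Riesz transform $\RR_{\Theta_R,\mu}$ satisfies an $L^2(\mu|_R)$ bound with constants independent of $R\in\ttt$, after which the paraproduct pieces arising between consecutive trees are absorbed by \eqref{eqpack57}.

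For a fixed $R\in\ttt$ with corkscrew point $p_R$, the natural candidate for a test function is, up to normalisation, $b_R = \mu(R)\,\dfrac{d\omega^{p_R}}{d\mu}$ (defined on the absolutely continuous part, and interpreted through the hypothesis on larger scales). The accretivity condition $\int_Q b_R\,d\mu\approx\mu(Q)$ for $Q\in\tree(R)$ is essentially the assumption $\omega^{p_R}(\lambda_0 Q)\approx \mu(Q)/\mu(R)$, transferred from $\lambda_0 Q$ to $Q$ by a doubling/engulfing step using the small boundary condition \eqref{small boundary condition} together with Lemma \ref{lembourgain}. The point of choosing $b_R$ as a multiple of harmonic measure is the identity
\[
\RR\omega^{p_R}(x) \;=\; K(x-p_R) \;-\; c_n\,\nabla_x G(x,p_R),\qquad x\in\Omega,
\]
from \eqref{eqclau1}: the first term is pointwise harmless away from $p_R$, while the second term is, by Lemma \ref{l:w>G} and the lower bounds \eqref{e:w>g}, \eqref{e:w>gpla}, controlled (at scale $\ell(Q)$) by $\ell(Q)^{-n}\,\omega^{p_R}(\text{neighbourhood of }Q)\lesssim \mu(Q)/(\mu(R)\,\ell(Q)^n)\approx 1$. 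This would bound $\RR\omega^{p_R}$ pointwise in the relevant Whitney region and thereby yield the pointwise $Tb$-type testing condition on $b_R$.

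The central difficulty, and the obstacle emphasised in the introduction, is that $\omega^{p_R}$ and $\mu$ may be mutually singular, so the standard non-homogeneous $Tb$ theorems do not apply to $\RR_\mu$ directly with $b_R$ as test function. I would circumvent this using the suppressed Riesz kernels of Nazarov--Treil--Volberg \cite{NTV}: choose a $1$-Lipschitz suppression function
\[
\Theta_R(x) \;=\; \inf_{Q\in\tree(R)}\bigl(\ell(Q)+\dist(x,Q)\bigr),
\]
(truncated above by $\ell(R)$), so that $\Theta_R$ is comparable to $\ell(Q)$ on stopping cubes $Q$ and vanishes on the ``good'' part of the tree. The NTV-suppressed kernel $K_{\Theta_R}(x,y)$ coincides with $K(x-y)$ when $|x-y|\ge \max\{\Theta_R(x),\Theta_R(y)\}$ and decays on the diagonal, and it satisfies a $Tb$ theorem for a non-doubling measure provided one can verify $|\RR_{\Theta_R,\mu}b_R|\lesssim 1$ pointwise on $\supp\mu\cap R$ and an accretivity bound on cubes. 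Both verifications are precisely the content of the Green function argument sketched above, combined with the control of $G(x,p_R)$ versus $\omega^{p_R}$ on scales $\Theta_R(x)$.

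The hard part of the proof is exactly the verification of the pointwise testing inequality for $\RR_{\Theta_R,\mu}b_R$. One has to split the integral $\int K_{\Theta_R}(x,y)\,b_R(y)\,d\mu(y)$ into the piece at scales below $\Theta_R(x)$ (where the kernel is suppressed and handled by the trivial bound on $b_R$ via the density comparison) and the piece at scales above $\Theta_R(x)$ (where $K_{\Theta_R}=K$ and the identity $\RR\omega^{p_R}= K(\,\cdot\,-p_R)-c_n\nabla G(\,\cdot\,,p_R)$ replaces the singular integral of $b_R\mu$ by the gradient of a Green function, controlled by Lemma \ref{l:w>G}). Once this testing condition and the accretivity condition are in hand, the NTV $Tb$ theorem yields the uniform $L^2(\mu|_R)$-boundedness of $\RR_{\Theta_R,\mu}$, the corona synthesis produces $L^2(\mu)$-boundedness of $\RR_\mu$, and \cite{NToV} closes the proof.
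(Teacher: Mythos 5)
Your proposal correctly identifies the main ingredients of the paper's argument: the reduction to $L^2(\mu)$-boundedness of $\RR_\mu$ and an appeal to \cite{NToV}, the tree-localized decomposition of $\RR_\mu$ and the use of the Carleson packing condition \eqref{eqpack57} via a Semmes/Carleson-embedding synthesis, the Green-function identity \eqref{eqclau1} together with Lemma \ref{l:w>G} as the source of the pointwise estimate, and the NTV suppressed kernels with the suppression function
$\Theta_R(x)=\inf_{Q\in\tree(R)}\bigl(\ell(Q)+\dist(x,Q)\bigr)$,
which is exactly the $\Phi$ used in the paper. So far this is aligned.

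The genuine gap is in the $Tb$ step. You propose to run a $Tb$ argument for $\RR_\mu$ with test function $b_R=\mu(R)\,\tfrac{d\omega^{p_R}}{d\mu}$, ``defined on the absolutely continuous part, and interpreted through the hypothesis on larger scales.'' This is not a definition, and the difficulty it glosses over is precisely the one the introduction flags: $\omega^{p_R}$ and $\mu$ can be mutually singular, and the hypothesis $\omega^{p_R}(\lambda_0 Q)\approx\mu(Q)/\mu(R)$ holds only for $Q\in\tree(R)$, i.e.\ only down to the stopping scale; below the stopping cubes there is no control whatsoever, and $\omega^{p_R}$ may concentrate entirely on a set of $\mu$-measure zero there. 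Then $b_R\equiv 0$ on a large part of $R$, accretivity fails, and $\RR_{\Theta_R,\mu}b_R=\RR_{\Theta_R}(b_R\mu)$ sees only the absolutely continuous part of $\omega^{p_R}$, so the Green-function identity (which gives a bound for $\RR\omega^{p_R}$, i.e.\ the transform of the \emph{whole} harmonic measure) no longer applies. Suppressing the kernel cures the lack of a priori $L^2$-boundedness and the non-doubling of the underlying measure; it does not manufacture a usable test function when the two measures are singular.

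The paper resolves this by interchanging the roles of the measure and the test function, rather than by picking a cleverer $b_R$. It defines $\sigma=\mu(R)\,\omega^{p_R}|_{3R}$ and $\wt\mu=\mu|_{\bigcup_{Q\in\reg(R)}3R\cap Q}$, proves the pointwise estimate $\RR_{\Phi,*}\sigma(x)\lesssim 1$ on $3R$ (Lemma \ref{lemsigma1}) directly from \eqref{eqclau1}, \eqref{e:w>g}, \eqref{e:w>gpla} and Lemma \ref{lemriesz1}, and then invokes the NTV suppressed theorem \emph{for $\sigma$ with the trivial test function $b\equiv 1$} (Theorem \ref{lem:12.1-Vol}) to get $\RR_{\sigma,\Phi}$ bounded in $L^2(\sigma)$. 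The passage back to $\mu$ is then a separate, nontrivial transfer: Lemmas \ref{lemgrosimga}, \ref{lemfac999}, \ref{lem:reg-growth} and \ref{lem:maximal} set up the comparison $d\sigma\approx d\mu$ on the ``good'' part of the tree and growth/maximal-function bounds on stopping cubes; Lemma \ref{lemsigma2} moves $\RR_{\sigma,\Phi}$ from $L^p(\sigma)$ to $L^p(\wt\mu)$; Lemma \ref{lemsigma3} bounds $\RR_{\wt\mu,\Phi}$ on $L^p(\wt\mu)$ via a Calder\'on--Zygmund decomposition over the regularized stopping cubes with $\vphi_Q\,\sigma$ replacing $f\,\wt\mu|_Q$; and Lemma \ref{lemsigma4} assembles these into $L^p(\mu|_{3R})$-boundedness of $\RR_{\mu|_{3R},\Phi}$. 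Without this transfer machinery a testing condition against $\omega^{p_R}$ alone does not yield an estimate against $\mu$, and that is exactly the step your sketch is missing.
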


Note that this proposition, when combined  with Proposition \ref{propo11}, completes the proof of Theorem \ref{teo1}.
Note also that from a family $\FF$  as  described in Theorem \ref{teo2} one can construct a corona
decomposition like that in Proposition 5.1, with $\lambda_0=7$. Indeed, if we let $\ttt$ be the family of children of all cubes from $\FF$, together
with $R_0=\partial\Omega$ if $\diam(\Omega)=\infty$, and then for $R\in\ttt$  we let $\tree(R)$ be the family of 
cubes $Q\in\DD_\mu(R)$ which are not contained in any cube from $\ttt\cap\DD_\mu(R)$ 
different from $R$, then it is easy to check
that the properties stated in the above proposition\footnote{We do not choose $\ttt=\FF$ because this would not guarantee the property (3) in the definition of the corona decomposition in Subsection \ref{subsecorona}.}  hold for this definition of $\ttt$.
Hence the combination of Propositions \ref{propo11} and \ref{propo12} also yields Theorem \ref{teo2}.

\vv

In the rest of this section we suppose that the assumptions of
 Proposition \ref{propo12} hold for the family $\ttt
\subset\DD_\mu$.
 Further, for simplicity we will assume that $\lambda_0=3$. Very minor modifications,
which we leave for the reader, yield the conclusion in the case $\lambda_0>1$. 

\vv

\subsection{The Riesz transform of $\omega^{p_R}$}

Given $R\in\ttt$, we denote by $\sss(R)$ the family of cubes $Q\in\tree(R)$ such that
their children do not belong to $\tree(R)$.

The connection between the Riesz transform operator and harmonic measure is provided by the following result.

\begin{lemma}\label{lemriesz1}
For $x\in R$, denote
$$\ell(x) = \left\{ \begin{array}{ll}
0 &\quad \mbox{if $x\in R\setminus\bigcup_{Q\in\sss(R)} Q$,}\\
&\\
\ell(Q)&\quad  \mbox{if $x\in Q\in\sss(R)$.}
\end{array}\right.
$$
Then,
$$\sup_{t>\ell(x)} |\RR_t \omega^{p_R}(x)|\lesssim \frac1{\mu(R)}.$$
\end{lemma}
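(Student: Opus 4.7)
The plan is to replace the truncated Riesz transform of $\omega^{p_R}$ at the boundary point $x\in R$ by the full Riesz transform of $\omega^{p_R}$ at a nearby corkscrew point $\xi\in\Omega$, where the Green function identity \eqref{eqclau1} applies. The case $t\gtrsim\ell(R)$ is trivial: since $\omega^{p_R}$ is a probability measure, $|\RR_t\omega^{p_R}(x)|\leq t^{-n}\lesssim\ell(R)^{-n}\approx 1/\mu(R)$. In the main regime $\ell(x)<t\ll\ell(R)$, the condition $t>\ell(x)$ guarantees that no stopping cube of $\sss(R)$ containing $x$ has size $\geq t$, so every cube of $\DD_\mu$ of size between $t$ and $\ell(R)$ containing $x$ belongs to $\tree(R)$. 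Fix such a cube $Q\in\tree(R)$ with $\ell(Q)\approx t$ and set $\xi:=p_Q$, so that $\dist(\xi,\d\Omega)\approx\ell(Q)\approx |\xi-x|\approx t$ and, thanks to $\dist(p_R,R)\gtrsim\ell(R)$, $|\xi-p_R|\approx\ell(R)$.

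Next, I bound $|\RR\omega^{p_R}(\xi)|$. By \eqref{eqclau1}, $\RR\omega^{p_R}(\xi)=K(\xi-p_R)-c_n\nabla_\xi G(\xi,p_R)$, whence $|K(\xi-p_R)|\lesssim\ell(R)^{-n}\approx 1/\mu(R)$. For the gradient of the Green function, the interior gradient estimate for the positive harmonic function $G(\cdot,p_R)$ on the ball $B(\xi,\dist(\xi,\d\Omega)/2)\subset\Omega$ yields $|\nabla_\xi G(\xi,p_R)|\lesssim G(\xi,p_R)/\dist(\xi,\d\Omega)$ (for $n=1$, the identical argument applies with the oscillation of $G(\cdot,p_R)$ on this ball, controlled via \eqref{e:w>gpla}). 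Lemma \ref{l:w>G} together with the corona estimate $\omega^{p_R}(3Q)\approx\mu(Q)/\mu(R)$ then produces $G(\xi,p_R)\lesssim\omega^{p_R}(3Q)/\ell(Q)^{n-1}\lesssim\ell(Q)/\mu(R)$, hence $|\nabla_\xi G(\xi,p_R)|\lesssim 1/\mu(R)$ and therefore $|\RR\omega^{p_R}(\xi)|\lesssim 1/\mu(R)$.

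It remains to compare $\RR_t\omega^{p_R}(x)$ with $\RR\omega^{p_R}(\xi)$. For a fixed large constant $M$, split the difference into a local part over $\{|x-y|\leq Mt\}$ and a non-local part over $\{|x-y|>Mt\}$. The local part is estimated using $|K(\xi-y)|,|K(x-y)|\lesssim t^{-n}$ (the first from $|\xi-y|\geq\dist(\xi,\d\Omega)\approx t$) and the bound $\omega^{p_R}(B(x,Mt))\leq \omega^{p_R}(3Q^*)\lesssim t^n/\mu(R)$ for a suitable ancestor $Q^*\in\tree(R)$ of $Q$ of size $\approx Mt$; the non-local part is controlled via the smoothness estimate $|K(\xi-y)-K(x-y)|\lesssim t/|x-y|^{n+1}$, dyadically decomposed into annuli $A_k=\{2^kMt\leq|x-y|<2^{k+1}Mt\}$. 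Each annulus with $2^{k+1}Mt\leq\ell(R)$ contributes $\lesssim 2^{-k}/\mu(R)$ via the corona estimate on the ancestor of $Q$ of size $\approx 2^kt$, while annuli beyond scale $\ell(R)$ contribute at most $t/\ell(R)^{n+1}\lesssim 1/\mu(R)$. The geometric series sums to $\lesssim 1/\mu(R)$, completing the proof. The principal technical point, pervading both the bound on $|\RR\omega^{p_R}(\xi)|$ and the non-local comparison, is the uniform availability of the corona estimate $\omega^{p_R}(3Q')\approx\mu(Q')/\mu(R)$ at every dyadic scale $\ell(Q')\in[t,\ell(R)]$ along the ancestral chain of $Q$; this is guaranteed precisely because $t>\ell(x)$ forces all such ancestors to lie in $\tree(R)$, and without this stopping-time condition the kernel smoothness argument would produce no gain.
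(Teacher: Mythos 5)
Your argument follows the same route as the paper: handle $t\gtrsim\ell(R)$ trivially, use the stopping condition $t>\ell(x)$ to locate a cube $Q\in\tree(R)$ with $\ell(Q)\approx t$ containing $x$, bound $|\RR\omega^{p_R}(p_Q)|$ via the identity \eqref{eqclau1} together with the interior gradient estimate, Lemma~\ref{l:w>G} and the corona estimate (exactly as in \eqref{eq:green/lQ}), and finally compare $\RR_t\omega^{p_R}(x)$ with $\RR\omega^{p_R}(p_Q)$ by Calder\'on--Zygmund estimates controlled by $\sup_{\lambda\geq 1}\omega^{p_R}(\lambda Q)/\ell(\lambda Q)^n\lesssim 1/\mu(R)$. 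The only difference is that you spell out the dyadic-annuli comparison that the paper labels ``standard Calder\'on--Zygmund estimates,'' which is a matter of exposition rather than a different method.
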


\begin{proof}
First note that for all $t\geq 2\ell(R)$ and $x\in R$,
$$|\RR_{t}\omega^{p_R}(x)|\leq \frac{\|\omega^{p_R}\|}{\ell(R)^n} \lesssim \frac1{\mu(R)}.$$

Next we will show that for $x\in Q\in\tree(R)$ and $\ell(Q)\leq t \leq 2\ell(Q)$,  
 \begin{equation}\label{eq:trunc.Riesz.hm}
|\RR_{t}\omega^{p_R}(x)| \lesssim \frac{1}{\mu(R)}.
\end{equation}
Clearly, this suffices to prove the lemma.
Let $p_Q$ be the corkscrew point associated with the cube $Q$, as defined at the beginning of Section \ref{sec:aproximation}. Now, by standard Calder\' on-Zygmund estimates, using that 
all the ancestors of $Q$ in $\tree(R)$ satisfy 
$$\omega^{p_R}(3Q)\lesssim \frac{\mu(Q)}{\mu(R)}$$
 and that $\| \omega^{p_R}\|=1$ for the ancestors that do not belong to $\tree(R)$, 
 it is easy to prove that  
 \begin{equation*}
|\RR_{t}\omega^{p_R}(x)-\RR_{r(V_Q)}\omega^{p_R}(p_Q)| \lesssim \sup_{\lambda\geq1} \frac{\omega^{p_R}(\lambda Q)}{\ell(\lambda Q)^n}\lesssim  \frac{1}{\mu(R)}.
\end{equation*}
Notice also that by the choice of $p_Q$, 
$$ \RR_{r(V_Q)}\omega^{p_R}(p_Q) =\RR \omega^{p_R}(p_Q).$$
Therefore, to finish the proof of the lemma it is enough to show that 
 \begin{equation}\label{eq:Rieszw<mR}
 |\RR \omega^{p_R}(p_Q) |\lesssim \frac{1}{\mu(R)}.
 \end{equation}

From \rf{eqclau1}, it is clear that for all $x\in V_Q$ we have
\begin{equation}\label{eqkey6}
|\RR\omega^{p_R}(x)| \lesssim \frac1{\ell(R)^n} + |\nabla_x G(x,{p_R})|.
\end{equation}
Since $G(\cdot,{p_R})$ is harmonic in $2V_Q$ and positive in $\Omega$, for all $x\in V_Q$ we have
$$|\nabla_x G(x,{p_R})|\lesssim \frac1{r(V_Q)}\,\sup_{y\in2V_Q} |G(y,{p_R})-G(p_Q,p_R)|.$$
Then, using \eqref{e:w>g} and \rf{e:w>gpla}, along with the fact that $\mu$ has polynomial growth, we infer that
for $y\in2V_Q$
\begin{align}\label{eq:green/lQ}\frac{|G(y,{p_R})-G(p_Q,p_R)|}{r(V_Q)} &\lesssim
\frac{\omega^{p_R}(2\delta_0^{-1} Q)}
{\ell(Q)^{n}\,\omega^{p_Q}(Q)}  \\\lesssim_{\delta_0}\frac{\omega^{p_R}(2\delta_0^{-1}Q)}{\mu(2\delta_0^{-1}Q)} &\lesssim \frac{\omega^{p_R}(2R)}{\mu(2R)}\lesssim \frac{1}{\mu(R)}.
\nonumber
\end{align}
Together with 
\rf{eqkey6}, this gives \rf{eq:Rieszw<mR}.
\end{proof}

\vv


\subsection{Decomposition of $\RR_\mu$ in terms of the corona decomposition}

Given $\eta>0$ and a function $f\in L^2(\mu)$, we decompose the $\eta$-truncated Riesz transform of $f\mu$ as follows.
For every $Q\in\DD_\mu$, we set\footnote{Recall that for $Q\in\DD_{\mu,j}$, we write $\ell(Q)= 2^{-j }$ and have $\diam(Q)\leq\ell(Q)$.}
$$\RR_Q (f\mu)(x) = \chi_Q \int_{\max(\eta,\ell(Q)/2)<|x-y|\leq\ell(Q)} \frac{x-y}{|x-y|^{n+1}}\,f(y)\,d\mu(y),$$
and for every $R\in\ttt$,
$$K_R (f\mu)(x) = \sum_{Q\in\tree(R)}\RR_Q (f\mu)(x),$$
so that
\begin{equation}\label{eqdecomp}
\RR_\eta(f\mu) = \sum_{R\in\ttt}K_R (f\mu).
\end{equation}

Using the decomposition \rf{eqdecomp} we will show below that
\begin{equation}\label{eqacot}
\|\RR_\eta(f\mu)\|_{L^2(\mu|_{R_0})}\leq c\,\|f\|_{L^2(\mu)},
\end{equation}
with the constant $c$ uniform on $R_0\in\DD_\mu$ and $\eta>0$. Clearly this yields the $L^2(\mu)$-boundedness of $\RR_\mu$ and thus the uniform rectifiability
of $\mu$, by \cite{NToV}.

\vv


\subsection{The suppressed Riesz transform}

In this subsection we describe some results on singular integrals with ``suppressed kernels''. All the results
in this subsection are due to Nazarov, Treil and Volberg \cite{NTV} (although we may provide different references which may be more convenient for the reader).
We consider the ``suppressed kernel'' 
$$K_\Phi(x,y) = \frac{x-y}{\bigl(|x-y|^2 + \Phi(x)\,\Phi(y)\bigr)^{(n+1)/2}},$$
which satisfies
\begin{equation}\label{eq:supkerbound}
|K_\Phi(x,y)| \lesssim \min \left(\frac{1}{|x-y|^n}, \frac{1}{\Phi(x)^n},\frac{1}{\Phi(y)^n}\right),
\end{equation}
and 
\begin{equation}\label{eq:supkerholder}
|K_\Phi(x,y)-K_\Phi(x',y)| \lesssim \frac{|x-x'|}{|x-y|^{n+1}}, \quad \text{if} \,\, 2|x-x'| \leq |x-y|.
\end{equation}
Since $K_\Phi$ is anti-symmetric an analogous estimate holds in the $y$-variable. For a proof of the above estimates see e.g. Lemma 8.2 in \cite{Volberg}.
We define the associated Riesz transform by
$$\RR_\Phi \nu(x) :=\int K_\Phi(x,y)\,d\nu,$$
for a Radon measure $\nu$.
We also set $\RR_{\nu,\Phi} f := \RR_\Phi (f\,\nu)$, for $f\in L^1_{loc}(\nu)$.
\vv

We shall now record some auxiliary results which we will use repeatedly. If $\nu$ is a positive and finite Borel measure, we define 
\begin{equation}\label{eqrho}
\rho_\nu(x):= \sup\{r>0: \nu(B(x,r)> C_0 r^n\}
\end{equation}
and set $\rho_\nu(x)=0$, if the set on the right hand-side is empty. 
From now on we assume that $\Phi:\R^{n+1} \to [0, \infty)$ is a $1$-Lipschitz function such that $\Phi(x)\geq \rho_\nu(x)$ for any $x \in \R^{n+1}$. 

\vv
\begin{lemma}[Lemma 5.4, \cite{Tolsa-llibre}]\label{lem:5.4-llibre}
If $x \in \R^{n+1}$ and $\ve > \Phi(x)$, then
$$|\RR_{\Phi, \ve} \nu(x) - \RR_{\ve} \nu(x)| \lesssim \sup_{r>\ve} \frac{\nu(B(x,r))}{r^n}.$$
\end{lemma}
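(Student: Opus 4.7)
The plan is to write the difference as an integral of the difference of kernels and exploit the smallness produced by the suppression term when $|x-y|>\ve>\Phi(x)$. Concretely, since both operators are truncated at the same radius $\ve$, I would first write
$$\RR_{\Phi,\ve}\nu(x)-\RR_{\ve}\nu(x)=\int_{|x-y|>\ve}\bigl[K_\Phi(x,y)-K(x,y)\bigr]\,d\nu(y),$$
where $K(x,y)=(x-y)/|x-y|^{n+1}$, and then reduce the problem to a pointwise estimate on the integrand.

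Next, I would use the elementary inequality $|A^{-(n+1)/2}-B^{-(n+1)/2}|\lesssim |A-B|\min(A,B)^{-(n+3)/2}$ with $A=|x-y|^2+\Phi(x)\Phi(y)$ and $B=|x-y|^2$ to obtain
$$\bigl|K_\Phi(x,y)-K(x,y)\bigr|\lesssim \frac{\Phi(x)\,\Phi(y)}{|x-y|^{n+2}}.$$
The 1-Lipschitz property of $\Phi$ combined with the hypothesis $\Phi(x)<\ve<|x-y|$ gives $\Phi(y)\leq \Phi(x)+|x-y|\leq 2|x-y|$ and $\Phi(x)<\ve$, so the bound collapses to
$$\bigl|K_\Phi(x,y)-K(x,y)\bigr|\lesssim \frac{\ve}{|x-y|^{n+1}}\quad\text{for }|x-y|>\ve.$$

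Finally, I would insert this into the integral and sum over dyadic annuli $2^k\ve<|x-y|\leq 2^{k+1}\ve$, $k\geq 0$, to get
$$\bigl|\RR_{\Phi,\ve}\nu(x)-\RR_{\ve}\nu(x)\bigr|\lesssim \sum_{k\geq 0}\frac{\ve\,\nu(B(x,2^{k+1}\ve))}{(2^k\ve)^{n+1}}\lesssim \sum_{k\geq 0}2^{-k}\sup_{r>\ve}\frac{\nu(B(x,r))}{r^n},$$
and the geometric sum converges, yielding the claimed bound.

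The only step requiring any care is the pointwise estimate on $K_\Phi-K$: one must verify that the hypothesis $\ve>\Phi(x)$, transported via the Lipschitz property of $\Phi$ to control $\Phi(y)$, is sufficient to turn the naive bound $\Phi(x)\Phi(y)/|x-y|^{n+2}$ into something integrable against an upper Ahlfors-type density. Everything else is a routine dyadic tail computation.
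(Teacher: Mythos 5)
The proposal is correct. The paper does not prove this lemma itself but cites Lemma~5.4 of \cite{Tolsa-llibre}, and your argument is the standard one used there: bound the kernel difference by $\Phi(x)\Phi(y)/|x-y|^{n+2}$ via the mean value theorem, exploit $\Phi(x)<\ve$ and the $1$-Lipschitz property (giving $\Phi(y)\le\Phi(x)+|x-y|<2|x-y|$ on the domain of integration) to collapse this to $\ve/|x-y|^{n+1}$, and then sum over dyadic annuli; the extra factor $\ve/|x-y|$ relative to the bare size $|x-y|^{-n}$ of each kernel is exactly what makes the geometric sum converge. Your arithmetic also checks out: the annulus $2^k\ve<|x-y|\le 2^{k+1}\ve$ contributes $\lesssim 2^{-k}\sup_{r>\ve}\nu(B(x,r))/r^n$, and the tail sums.
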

\vv


\begin{lemma}[Lemma 5.26, \cite{Tolsa-llibre}]\label{lem:5.26-llibre}
Suppose that the operator $\RR_{\nu,\Phi}$ is bounded from $L^1(\nu)$ to $L^{1,\infty}(\nu)$. If $s \in (0,1]$, for any $f \in L^2(\nu)$ it holds that
$$\RR_{\nu, \Phi, *}f (x) \lesssim M_\nu [( \RR_{\nu, \Phi}f)^s](x)^{1/s} +  (1+\|\RR_{\Phi}\|_{L^1(\nu) \to L^{1,\infty}(\nu)})  M_\nu f(x).$$
\end{lemma}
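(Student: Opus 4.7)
My plan is to run a Cotlar-type argument adapted to the suppressed kernel $K_\Phi$. I would split into two regimes according to whether $\ve\leq \Phi(x)$ or $\ve>\Phi(x)$. In the regime $\ve\leq \Phi(x)$, a direct use of \eqref{eq:supkerbound} gives
\begin{equation*}
|\RR_{\nu,\Phi,\ve}f(x)-\RR_{\nu,\Phi}f(x)| \leq \int_{|x-y|\leq \ve}|K_\Phi(x,y)||f(y)|\,d\nu(y) \lesssim \frac{\nu(B(x,\Phi(x)))}{\Phi(x)^n}\,M_\nu f(x),
\end{equation*}
and the fraction is bounded because the hypothesis $\Phi(x)\geq \rho_\nu(x)$ forces $\nu(B(x,\Phi(x)))\lesssim \Phi(x)^n$. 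Combined with the Lebesgue differentiation bound $|\RR_{\nu,\Phi}f(x)|\leq M_\nu[(\RR_{\nu,\Phi}f)^s](x)^{1/s}$ (valid for $\nu$-a.e.\ $x$), this settles this regime.

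For $\ve>\Phi(x)$, fix $x$ and select a radius $\rho\in[2\ve,C\ve]$ via a dyadic pigeonhole so that $\nu(B(x,\rho))\leq K\,\nu(B(x,\rho/2))$; such $\rho$ exists because $\nu(B(x,r))\lesssim r^n$ for $r\geq \Phi(x)$ prevents $\nu(B(x,2^k\ve))$ from growing by a factor $>K$ through more than boundedly many dyadic steps. Decompose $f=f_1+f_2$ with $f_1:=f\chi_{B(x,\rho)}$. Since $f_2$ vanishes on $B(x,\rho)\supset B(x,\ve)$ and $|K_\Phi(x,y)|\lesssim |x-y|^{-n}$, the annulus $\{\ve<|x-y|\leq \rho\}$ contributes only $O(M_\nu f(x))$ via the polynomial growth of $\nu$, so $\RR_{\nu,\Phi,\ve}f(x)=\RR_{\nu,\Phi}f_2(x)+O(M_\nu f(x))$. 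Using the Hölder continuity \eqref{eq:supkerholder}, a standard computation yields $|\RR_{\nu,\Phi}f_2(y)-\RR_{\nu,\Phi}f_2(x)|\lesssim M_\nu f(x)$ for every $y\in B(x,\rho/2)$, so that
\begin{equation*}
|\RR_{\nu,\Phi,\ve}f(x)|\leq |\RR_{\nu,\Phi}f(y)|+|\RR_{\nu,\Phi}f_1(y)|+C\,M_\nu f(x), \qquad y\in B(x,\rho/2).
\end{equation*}
To exhibit a $y\in B(x,\rho/2)$ making both right-hand terms small, Chebyshev applied to the $s$-power shows that the set on which $|\RR_{\nu,\Phi}f(y)|^s>4\,M_\nu[(\RR_{\nu,\Phi}f)^s](x)$ has $\nu$-measure at most $\tfrac14\nu(B(x,\rho/2))$; by the weak-$L^1$ hypothesis, with $\lambda:=C(1+\|\RR_\Phi\|_{L^1(\nu)\to L^{1,\infty}(\nu)})M_\nu f(x)$ and using $\|f_1\|_{L^1(\nu)}\leq \nu(B(x,\rho))M_\nu f(x)\leq K\nu(B(x,\rho/2))M_\nu f(x)$, the set $\{y:|\RR_{\nu,\Phi}f_1(y)|>\lambda\}$ also occupies at most $\tfrac14\nu(B(x,\rho/2))$. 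Any $y$ outside the union yields the claimed bound.

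The principal technical obstacle is precisely the non-doubling nature of $\nu$: an arbitrary ratio $\nu(B(x,\rho))/\nu(B(x,\rho/2))$ would ruin the weak-$L^1$ selection step. The good-radius argument surmounts this, and is the only place where the hypothesis $\Phi\geq \rho_\nu$ enters in an essential way (through $\nu(B(x,r))\lesssim r^n$ for $r\geq \Phi(x)$). With that tool in hand every step above is standard Cotlar theory, and stitching the two regimes together produces the claimed pointwise inequality.
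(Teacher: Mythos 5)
Your overall Cotlar scheme is the right one, and indeed it is essentially the argument behind the result the paper merely cites (the paper gives no proof of this lemma; it refers to \cite{Tolsa-llibre} and only remarks on the extension from $s=1$ to $s\in(0,1]$): split according to $\ve\leq\Phi(x)$ or $\ve>\Phi(x)$, compare $\RR_{\nu,\Phi,\ve}f(x)$ with $\RR_{\nu,\Phi}f(y)$ and $\RR_{\nu,\Phi}f_1(y)$ at a good point $y$ produced by Chebyshev (for the $s$-power term) together with the weak-$(1,1)$ hypothesis (for the local term). The first regime, the H\"older-regularity tail for $f_2$, and the two selection estimates are all fine. The genuine gap is the good-radius step. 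From the growth $\nu(B(x,r))\leq C_0r^n$ for $r\geq\Phi(x)$ you \emph{cannot} conclude that some $\rho\in[2\ve,C\ve]$ satisfies $\nu(B(x,\rho))\leq K\,\nu(B(x,\rho/2))$: the growth bound caps the measure of the larger ball but gives no lower bound for the smaller one, so all of the boundedly many consecutive dyadic ratios between $\ve$ and $C\ve$ can exceed $K$ simultaneously. For example, let $\nu$ near $x$ consist of a set of tiny mass through $x$ together with concentric thin annuli at radii $2\ve,4\ve,\dots,C\ve$ carrying minuscule masses that increase by a factor $100$ from one annulus to the next; since every one of these masses is far below $\ve^n$, the upper growth bound holds with any fixed $C_0$, yet no $\rho\in[2\ve,C\ve]$ is $K$-doubling for $K\le100$. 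If instead you allow $C=C(x,\ve)$ to be huge, your bound for the intermediate annulus $\{\ve<|x-y|\leq\rho\}$ degenerates, because it is a sum of $\log_2(\rho/\ve)$ terms each of which you only control by $M_\nu f(x)$.

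The standard repair, which is what the cited proof uses and which your argument needs, is to take $\rho=2^k\ve$ to be the \emph{smallest} dyadic radius with $\nu(B(x,2^k\ve))\leq K\,\nu(B(x,2^{k-1}\ve))$, for a fixed $K>2^n$; such $k$ exists for $x\in\supp\nu$ (otherwise $\nu(B(x,2^k\ve))\geq K^k\nu(B(x,\ve))$ would eventually contradict the growth bound), but $k$ is in general unbounded. Minimality is then exactly what rescues the long annulus: for $j<k$ one has $\nu(B(x,2^{j+1}\ve))\leq K^{-(k-j-1)}\nu(B(x,2^{k}\ve))\leq C_0K^{-(k-j-1)}(2^{k}\ve)^n$, so
$$\int_{\ve<|x-y|\leq\rho}\frac{|f(y)|}{|x-y|^{n}}\,d\nu(y)\;\lesssim\;\sum_{j=0}^{k-1}\frac{\nu(B(x,2^{j+1}\ve))}{(2^{j}\ve)^{n}}\,M_\nu f(x)\;\lesssim\;\sum_{j=0}^{k-1}\Bigl(\frac{2^{n}}{K}\Bigr)^{k-j-1}M_\nu f(x)\;\lesssim\;M_\nu f(x),$$
a geometric sum rather than $k\,M_\nu f(x)$. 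With this choice of $\rho$ the rest of your proof goes through verbatim: the regularity estimate for $f_2=f\chi_{B(x,\rho)^c}$ only uses the growth at radii $\geq\rho$, and the doubling $\nu(B(x,\rho))\leq K\,\nu(B(x,\rho/2))$ is precisely what your weak-$(1,1)$/Chebyshev selection of $y\in B(x,\rho/2)$ requires. So the architecture is correct, but as written the ``good radius in $[2\ve,C\ve]$'' claim is false, the annulus bound rests on it, and the minimal-doubling-scale device (or an equivalent one) is indispensable.
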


We remark here that in \cite{Tolsa-llibre} Lemma \ref{lem:5.26-llibre} is stated only for $s=1$. However, the same arguments (with minor adjustments) show this more general version as well.

\vv  
 Given $b>1$, we introduce the following suppressed  Hardy-Littlewood maximal operators: 
   \begin{align*}
 M_{\nu,b, \Phi} f(x)&:= \sup_{r  \geq\Phi(x)} \frac{1}{\nu(B(x, br))} \int_{B(x, br)} |f(y)| \,d\nu(y),\\
 M^r_{\nu,\Phi} f(x)&:= \sup_{r  \geq\Phi(x)} \frac{1}{r^n} \int_{B(x, b r)} |f(y)| \,d\nu(y).
 \end{align*}

We will need the following variant of Lemma \ref{lem:5.26-llibre}. The proof follows by
inspection from the proof of Lemma \ref{lem:5.26-llibre} and we leave the details to the reader.

\begin{lemma}\label{lem:var5.26}
Suppose that the operator $\RR_{\nu,\Phi}$ is bounded from $L^1(\nu)$ to $L^{1,\infty}(\nu)$.  If $s \in (0,1]$, then for any $f \in L^2(\nu)$, 
$$\RR_{\Phi, *}(f \,\nu) (x) \lesssim M_{\nu, b, \Phi} ( \RR_{\Phi}(f \,\nu)^s)(x)^{1/s} +  (1+\|\RR_{\Phi}\|_{L^1(\nu) \to L^{1,\infty}(\nu)})  M^r_{\nu,\Phi} f(x) + M_{\nu, b, \Phi} f(x),$$
with the implicit constant depending on $s$ and $b$.
\end{lemma}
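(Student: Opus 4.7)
The plan is to follow the proof of the original Lemma \ref{lem:5.26-llibre} in \cite{Tolsa-llibre}, with adjustments tailored to the suppressed maximal operators $M_{\nu,b,\Phi}$ and $M^r_{\nu,\Phi}$, which only see scales $r \geq \Phi(x)$. First I would reduce the problem to controlling $\RR_{\Phi,\ve}(f\nu)(x)$ at scales $\ve \geq \Phi(x)$: for $\ve < \Phi(x)$, the pointwise bound \eqref{eq:supkerbound} on $K_\Phi$ yields
$$|\RR_{\Phi,\ve}(f\nu)(x) - \RR_{\Phi,\Phi(x)}(f\nu)(x)| \lesssim \Phi(x)^{-n}\int_{B(x,\Phi(x))} |f|\,d\nu,$$
which is at most $M^r_{\nu,\Phi} f(x)$ upon taking $r = \Phi(x)$ in the supremum defining $M^r_{\nu,\Phi}$ (once $b \geq 1$).

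For $\ve \geq \Phi(x)$, set $r = \ve$, $B = B(x,r)$, and split $f = f_1 + f_2$ with $f_1 = f\chi_{bB}$. For $z \in \tfrac12 B$, the Hölder estimate \eqref{eq:supkerholder} combined with a dyadic decomposition of $\{|x-y| > br\}$ gives
$$|\RR_\Phi(f_2\nu)(z) - \RR_\Phi(f_2\nu)(x)| \lesssim \sum_{k \geq 0}\frac{r}{(2^kbr)^{n+1}}\int_{B(x,2^{k+1}br)}|f|\,d\nu \lesssim M^r_{\nu,\Phi} f(x),$$
while the contribution of the annulus $\{r < |x-y| \leq br\}$ to $\RR_\Phi(f_2\nu)(x) - \RR_{\Phi,\ve}(f\nu)(x)$ is likewise dominated by $M^r_{\nu,\Phi} f(x)$ via \eqref{eq:supkerbound}. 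Combining these with the triangle inequality yields
$$|\RR_{\Phi,\ve}(f\nu)(x)| \leq |\RR_\Phi(f\nu)(z)| + |\RR_\Phi(f_1\nu)(z)| + C\,M^r_{\nu,\Phi} f(x),\qquad z\in\tfrac12 B.$$

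Raising to the $s$-th power and averaging over $z$ with respect to $\nu$ on a slightly enlarged ball (where $\nu$ has enough mass; the definition of $M_{\nu,b,\Phi}$ uses precisely the radius $br$), the first term contributes at most $M_{\nu,b,\Phi}(|\RR_\Phi(f\nu)|^s)(x)$. The term involving $f_1$ is treated by Kolmogorov's inequality together with the assumed $L^1(\nu)\to L^{1,\infty}(\nu)$ boundedness of $\RR_{\nu,\Phi}$, giving
$$\avint_{bB} |\RR_\Phi(f_1\nu)|^s\,d\nu \lesssim \|\RR_{\Phi}\|^s_{L^1(\nu)\to L^{1,\infty}(\nu)}\,M_{\nu,b,\Phi} f(x)^s.$$
Taking the $1/s$-th power and then the supremum over $\ve > 0$ yields the claim.

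The main obstacle is the bookkeeping of the extra $M^r_{\nu,\Phi} f(x)$ term that arises from three distinct sources (reducing to $\ve \geq \Phi(x)$, the annulus $\{r<|x-y|\leq br\}$, and the Hölder-smoothness estimate), as this is precisely the new contribution absent from the unsuppressed Lemma \ref{lem:5.26-llibre}, and it must be bounded uniformly in $\ve$ by the suppressed maximal function, which only takes suprema over scales $r \geq \Phi(x)$. A secondary technical point, handled exactly as in \cite{Tolsa-llibre}, is the endpoint $s = 1$: Kolmogorov's inequality is usually stated for $s < 1$, and one then either takes a limit or invokes a small-constant Calderón--Zygmund type argument, both of which cost only the already-present $M_{\nu,b,\Phi}f(x)$ term and so do not alter the stated bound.
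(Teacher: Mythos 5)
Your outline correctly identifies the ingredients — the reduction to $\ve \geq \Phi(x)$, the split $f = f_1 + f_2$ with a H\"older estimate for the far part and a pointwise kernel bound for the near annulus, and Kolmogorov's inequality for the local piece — but as written it has a genuine gap in the averaging step. With $f_1 = f\chi_{B(x,br)}$ (and $r = \ve$), the pointwise inequality
$$|\RR_{\Phi,\ve}(f\,\nu)(x)| \leq |\RR_\Phi(f\,\nu)(z)| + |\RR_\Phi(f_1\,\nu)(z)| + C\,M^r_{\nu,\Phi}f(x)$$
is established only for $z \in B(x,br/2)$, because the H\"older estimate \eqref{eq:supkerholder} applied to $f_2$ needs $2|x-z|<|x-y|$ with $|x-y|>br$. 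Averaging over $B(x,br)$ — which you need both to produce the $b$-factor in $M_{\nu,b,\Phi}$ and to make the Kolmogorov quotient $\|f_1\|_{L^1(\nu)}/\nu(B(x,br))$ come out to $M_{\nu,b,\Phi}f(x)$ — uses the inequality at points where it has not been proven. If instead you shrink the averaging ball to $B(x,br/2)$, or enlarge the support of $f_1$ so the inequality holds on $B(x,br)$, the Kolmogorov step produces a factor of the form $\nu(B(x,2\rho))/\nu(B(x,\rho))$, and this is not bounded because $\nu$ is not assumed doubling; the remark that $\nu$ ``has enough mass'' on the enlarged ball does not address this.

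The missing ingredient, present in the proof of Lemma~\ref{lem:5.26-llibre} in \cite{Tolsa-llibre}, is the doubling-ball device: replace $\ve$ by the smallest $R = 2^j\ve \geq \ve$ for which $\nu(B(x,2bR)) \leq 2^{n+1}\nu(B(x,bR))$; such $R$ exists because $\Phi\geq\rho_\nu$ forces $\nu(B(x,\rho))\lesssim\rho^n$ for $\rho\geq\Phi(x)$, so the masses $\nu(B(x,2^kb\ve))$ cannot grow by a factor $2^{n+1}$ at every step. Taking $f_1=f\chi_{B(x,2bR)}$ and $z\in B(x,bR)$, the Kolmogorov quotient $\nu(B(x,2bR))/\nu(B(x,bR))$ is $\lesssim 1$ by construction. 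The price is the additional annulus $\{\ve<|x-y|\leq 2bR\}$ incurred in passing from the $\ve$-truncation to the $2bR$-truncation, and this is exactly where the minimality of $j$ enters: it forces $\nu(B(x,2^kb\ve))$ to decrease geometrically as $k$ decreases below $j$, so the dyadic sum over this annulus is $\lesssim M_{\nu,b,\Phi}f(x)+M^r_{\nu,\Phi}f(x)$ rather than $\lesssim j\cdot M^r_{\nu,\Phi}f(x)$. Finally, the worry you raise about the endpoint $s=1$ is a non-issue: by Jensen's inequality $M_{\nu,b,\Phi}(g^s)^{1/s}\leq M_{\nu,b,\Phi}(g)$ for $s\leq 1$, so any $s<1$ case of the lemma already implies the $s=1$ case.
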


\vv
\begin{lemma}[Lemma 5.27, \cite{Tolsa-llibre}]\label{lem:5.27-llibre}
If the operator $\RR_{\nu,\Phi}$ is bounded in $L^2(\nu)$, then it is also bounded from $L^1(\nu)$ to $L^{1,\infty}(\nu)$. Moreover, 
$$\|\RR_{\nu,\Phi}\|_{L^1(\nu) \to L^{1,\infty}(\nu)} \lesssim 1+ \|\RR_{\nu,\Phi}\|_{L^2(\nu) \to L^2(\nu)}. $$
\end{lemma}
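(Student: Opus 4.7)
The plan is to run the standard Calder\'on--Zygmund template for weak-type $(1,1)$ bounds, adapted to the non-homogeneous setting of Nazarov--Treil--Volberg and Tolsa. Set $M := 1 + \|\RR_{\nu,\Phi}\|_{L^2(\nu)\to L^2(\nu)}$. Given $f \in L^1(\nu)$ and $\lambda > 0$, I would apply Tolsa's non-homogeneous Calder\'on--Zygmund decomposition at the rescaled level $\lambda/M$ to obtain a splitting $f = g + \sum_i b_i$ with $\|g\|_\infty \lesssim \lambda/M$, $\|g\|_1 \leq \|f\|_1$, each $b_i$ supported on a ball $B_i$ with $\int b_i\,d\nu = 0$ and $\|b_i\|_1 \lesssim (\lambda/M)\,\nu(\alpha B_i)$, together with the packing estimate $\sum_i \nu(\alpha B_i) \lesssim M\|f\|_1/\lambda$, for a fixed large constant $\alpha$ ensuring each $\alpha B_i$ is a doubling ball for $\nu$ (this is the non-doubling substitute for the classically doubled balls).

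For the good part, Chebyshev and the $L^2$ hypothesis yield
\[
\nu\bigl(\{|\RR_\Phi g|>\lambda/2\}\bigr) \leq \frac{4}{\lambda^2}\|\RR_\Phi g\|_{L^2(\nu)}^2 \leq \frac{4M^2}{\lambda^2}\|g\|_\infty\|g\|_1 \lesssim \frac{M\,\|f\|_1}{\lambda}.
\]
For the bad part, set $E := \bigcup_i 2B_i$, so that $\nu(E) \lesssim M\|f\|_1/\lambda$ by the packing estimate. On $\R^{n+1}\setminus E$, the cancellation $\int b_i\,d\nu = 0$ and the H\"older bound \eqref{eq:supkerholder} give
\[
|\RR_\Phi b_i(x)| \lesssim \int_{B_i}\frac{r(B_i)}{|x-z_{B_i}|^{n+1}}\,|b_i(y)|\,d\nu(y),
\]
and integrating in $x$ over $\R^{n+1}\setminus E$ and using Fubini produces $\|\RR_\Phi b\|_{L^1(\nu|_{\R^{n+1}\setminus E})} \lesssim \sum_i \|b_i\|_1 \lesssim \|f\|_1$, provided the inner integral $\int_{\R^{n+1}\setminus 2B_i} r(B_i)\,|x-z_{B_i}|^{-n-1}\,d\nu(x)$ is bounded by an absolute constant. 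Combining both parts one obtains $\nu(\{|\RR_\Phi f|>\lambda\}) \lesssim M\,\|f\|_1/\lambda$, which is the claim.

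The main obstacle is that $\nu$ is genuinely non-doubling and need not have $n$-polynomial growth, so neither Tolsa's CZ decomposition nor the Fubini bound above is immediately available. This is exactly where the suppression function $\Phi$ is used: the hypothesis $\Phi \geq \rho_\nu$ combined with the pointwise bound \eqref{eq:supkerbound} guarantees that whenever $\nu(B(y,r)) > C_0\,r^n$ one has $\Phi(y) \geq r$, so $|K_\Phi(\cdot,y)|$ is automatically controlled by $r^{-n}$ at that point and scale. Consequently $(\nu,K_\Phi)$ behaves as if $\nu$ had $n$-polynomial growth: the bad balls selected by the CZ decomposition can be chosen with radii $\gtrsim \rho_\nu$ on their supports, and whenever the H\"older estimate \eqref{eq:supkerholder} would fail because $r(B_i)$ is too small compared to $\Phi$, the crude pointwise bound $|K_\Phi| \lesssim \Phi^{-n}$ serves as an adequate substitute. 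Once this effective polynomial growth is in place, the remainder is bookkeeping and the advertised linear dependence on $\|\RR_{\nu,\Phi}\|_{L^2(\nu)\to L^2(\nu)}$ emerges from having performed the CZ decomposition at level $\lambda/M$ rather than $\lambda$.
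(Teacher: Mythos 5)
The paper does not prove this lemma; it cites Lemma 5.27 of \cite{Tolsa-llibre}, and your sketch follows the same Calder\'on--Zygmund template that the reference uses (non-homogeneous CZ decomposition at level $\lambda/M$ with doubling enlargements, Chebyshev plus the $L^2$ hypothesis for the good part, cancellation plus the H\"older estimate \rf{eq:supkerholder} for the bad part), and this does produce the linear dependence on $M=1+\|\RR_{\nu,\Phi}\|_{L^2(\nu)\to L^2(\nu)}$.

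One clause in your last paragraph is not right, although it does not derail the argument: the non-homogeneous CZ decomposition does \emph{not} let you choose the bad balls with radii $\gtrsim \rho_\nu$ on their supports --- the balls are selected purely by the stopping condition on the maximal function and there is no such lower bound on their radii. What actually makes the tail estimate work, and which you also correctly identify, is the suppression alone. Concretely, fix a bad ball $B_i$ with center $z_i$ and radius $r_i$. If $r_i\gtrsim\Phi(z_i)$ then, since $\Phi$ is $1$-Lipschitz and $\Phi\geq\rho_\nu$, one has $\nu(B(z_i,r))\lesssim r^n$ for $r\gtrsim r_i$, and the H\"older bound gives $\int_{(2B_i)^c}\frac{r_i}{|x-z_i|^{n+1}}\,d\nu(x)\lesssim 1$ as in the classical case. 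If instead $r_i\ll\Phi(z_i)$, then $\Phi(y)\gtrsim\Phi(z_i)$ for $y\in B_i$, so by \rf{eq:supkerbound} one has $|K_\Phi(x,y)-K_\Phi(x,z_i)|\lesssim\Phi(z_i)^{-n}$; integrating this over $|x-z_i|\leq\Phi(z_i)$ and using $\nu(B(z_i,\Phi(z_i)))\lesssim\Phi(z_i)^n$ (which follows from $\Phi(z_i)\geq\rho_\nu(z_i)$) gives a bounded contribution, while for $|x-z_i|>\Phi(z_i)$ the measure \emph{does} have polynomial growth and the H\"older decay $\frac{r_i}{|x-z_i|^{n+1}}$ again yields $\lesssim r_i/\Phi(z_i)\leq 1$. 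So the suppression compensates for the absence of growth without any hypothesis on the radii of the CZ balls; with that correction your proof is the same as the cited one.
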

\vv

\begin{theorem}\label{lem:12.1-Vol}
If there exists a constant $C_1>0$ such that $\RR_{\Phi, *} \nu(x)\leq C_1$ for $\nu$-a.e.\ $x$, then  $\RR_{\nu,\Phi}$ is bounded in $L^2(\nu)$.
\end{theorem}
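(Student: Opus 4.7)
The plan is to prove this statement as a non-homogeneous $T1$ theorem for the suppressed Riesz transform: the hypothesis $\RR_{\Phi,*}\nu(x)\le C_1$ a.e.\ plays the role of the testing condition ``$T1\in L^\infty$'' in the classical $T1$ theorem. The suppressed kernel $K_\Phi$ satisfies the standard Calder\'on--Zygmund kernel estimates \eqref{eq:supkerbound}--\eqref{eq:supkerholder}, with the crucial additional feature that $|K_\Phi(x,y)|\lesssim \min(\Phi(x)^{-n},\Phi(y)^{-n})$, which regularizes the kernel at the scale dictated by $\Phi$. Moreover, since $\Phi(x)\ge\rho_\nu(x)$ by hypothesis, the measure $\nu$ has the polynomial growth bound $\nu(B(x,r))\le C_0 r^n$ at every scale $r\ge \Phi(x)$. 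So once one is working ``above the suppression scale'' the situation is that of a non-doubling but linearly bounded measure.

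First I would reduce to proving a uniform bilinear estimate $|\langle \RR_{\Phi}(f\nu),g\nu\rangle|\lesssim \|f\|_{L^2(\nu)}\|g\|_{L^2(\nu)}$ for nice $f,g$, by passing to $\eta$-truncated operators and using the antisymmetry of $K_\Phi$. Then I would introduce a dyadic martingale decomposition $f=\sum \D_Q f$, $g=\sum \D_{Q'} g$ adapted to $\nu$ (for instance a variant of the David--Semmes lattice, suitably randomized as in Nazarov--Treil--Volberg), where the decomposition is stopped whenever a cube $Q$ reaches the scale $\Phi$ (i.e.\ whenever $\ell(Q)\lesssim \Phi(z_Q)$). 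The growth bound just mentioned guarantees that this stopped lattice behaves ``as if'' $\nu$ were $n$-dimensional AD-regular.

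The main step is to expand the bilinear form into pairs of martingale differences and estimate the off-diagonal terms by the H\"older estimate \eqref{eq:supkerholder} (yielding fast geometric decay between well-separated cubes), and the near-diagonal terms by the uniform bound $|K_\Phi|\lesssim \Phi^{-n}$. The testing hypothesis enters when a martingale block of $f$ (or $g$) has a nonvanishing mean on its support: precisely those pieces produce a contribution proportional to $\langle \RR_\Phi \nu, \cdot\rangle$ or its dual, which by assumption is uniformly bounded. This is exactly analogous to how $T1$ (or $T^*1$) shows up in Coifman--Jones--Semmes or David--Journ\'e style proofs. Summing the resulting quadratic form via Cauchy--Schwarz and an orthogonality/Schur-test argument yields the desired $L^2(\nu)$ bound on $\RR_{\nu,\Phi}$. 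Once this is in hand, control of the maximal operator $\RR_{\Phi,*}$ follows from the Cotlar-type Lemmas \ref{lem:5.26-llibre} and \ref{lem:var5.26} together with Lemma \ref{lem:5.27-llibre}.

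The hard part will be executing the martingale argument cleanly in the presence of two simultaneous difficulties: $\nu$ is non-doubling, and the ``cutoff scale'' $\Phi(x)$ varies with $x$, so cubes in the adapted lattice do not all live at the same suppression regime. The key technical device overcoming this is the fact that $\Phi$ is $1$-Lipschitz, so that $\Phi$ is essentially constant on any cube whose diameter is small compared to $\Phi$ at its center; this allows one to treat the kernel $K_\Phi$ as a fixed CZ kernel on each such cube and to apply standard paraproduct/Schur estimates locally. All of this is the content of the Nazarov--Treil--Volberg theory, and the present statement is precisely the $Tb$-theorem for suppressed kernels (with $b\equiv 1$), so the proof is an application of that machinery once the testing hypothesis has been identified with theirs.
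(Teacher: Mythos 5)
Your proposal is correct and ultimately takes the same route as the paper: the last paragraph correctly identifies the statement as an application of the Nazarov--Treil--Volberg $Tb$-theorem for suppressed kernels with the accretive test function $b\equiv 1$, which is precisely what the paper does by citing Theorem~12.1 in \cite{Volberg} and noting that with $b\equiv 1$ the exceptional set $T_{12}$ is empty. The lengthy martingale-decomposition sketch in the middle of your proposal is a re-derivation of the internals of that cited theorem and is not needed once the identification with the NTV result is made.
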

 
\begin{proof}
This follows from an application of the $Tb$ theorem for suppressed operators of Nazarov, Treil and Volberg (see Theorem 12.1 in \cite{Volberg}). Indeed, if our test function is $b=1$, then it is always accretive and thus, the set $T_{12}$ in Theorem 12.1 is just the empty set. Therefore, since 
$$|K_\Phi(x,y)| \lesssim \min \left(\frac{1}{|x-y|^n}, \frac{1}{\Phi(x)^n},\frac{1}{\Phi(y)^n}\right),$$
$\Phi(x)\geq \rho_\nu(x)$ and, by assumption, $\RR_{\nu, \Phi, *} \leq C_1$, we can apply Theorem 12.1 in \cite{Volberg} to obtain that $\RR_{\Phi}$ is bounded in $L^2(\nu)$.
\end{proof}
\vvv

\subsection{The $L^p(\mu)$-boundedness of $K_R(\cdot\,\mu)$}

Our next objective consists in proving the following:

\begin{lemma}\label{lemkr}
Let $1<p<\infty$ and $f\in L^p(\mu)$. Then for every $R\in\ttt$ we have
$$\|K_R(f\mu)\|_{L^p(\mu)}\lesssim\|f\|_{L^p(\mu)},$$
with the implicit constant uniform on $R$.
\end{lemma}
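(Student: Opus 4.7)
The plan is to reduce the bound to the $L^2(\mu)$-boundedness of a suitable suppressed Riesz transform $\RR_{\mu,\Phi_R}$, and then invoke the Nazarov--Treil--Volberg $Tb$ theorem (Theorem \ref{lem:12.1-Vol}). Here $\Phi_R$ is a $1$-Lipschitz function encoding the stopping scale of $\tree(R)$; the role of the harmonic measure $\omega^{p_R}$ is to supply, via Lemma \ref{lemriesz1}, the pointwise control of $\RR_{\Phi_R,*}$ needed as input to Theorem \ref{lem:12.1-Vol}.

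The first step is to define
\[
\Phi_R(x) := \inf_{y\in\R^{n+1}}\bigl(|x-y|+D(y)\bigr),
\]
where $D(y)=C\ell(Q)$ if $y\in Q\in\sss(R)$, $D(y)=0$ if $y\in R\setminus\bigcup_{Q\in\sss(R)}Q$, and $D(y)=+\infty$ otherwise, with $C$ a large constant fixed later. Then $\Phi_R$ is $1$-Lipschitz and $\Phi_R(x)\geq \ell(x)$ on $R$, with $\ell(x)$ as in Lemma \ref{lemriesz1}. Set $\sigma_R:=\mu(R)\,\omega^{p_R}$. The corona estimate $\omega^{p_R}(3Q)\approx\mu(Q)/\mu(R)$ for $Q\in\tree(R)$ together with the AD-regularity of $\mu$ shows that $\Phi_R$ dominates $\rho_{\sigma_R}$ up to a multiplicative constant. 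Lemma \ref{lemriesz1}, combined with Lemma \ref{lem:5.4-llibre}, then yields $\RR_{\Phi_R,*}\sigma_R(x)\lesssim 1$ pointwise.

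The principal technical obstacle is to transfer this pointwise bound to the surface measure $\mu$, which may be mutually singular with $\sigma_R$ despite sharing the same support. For $x\in R$ and $\ve>\Phi_R(x)$, split
\[
\RR_{\Phi_R,\ve}\mu(x) = \RR_{\Phi_R,\ve}\sigma_R(x) + \RR_{\Phi_R,\ve}(\mu-\sigma_R)(x).
\]
To estimate the second term, list the chain $R=Q_0\supsetneq Q_1\supsetneq\cdots\supsetneq Q_j$ of ancestors of $x$ in $\tree(R)$ with $\ell(Q_j)\approx\Phi_R(x)$, and decompose $\mu-\sigma_R$ along the annular shells $3Q_k\setminus 3Q_{k+1}$. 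On each shell the H\"older estimate \eqref{eq:supkerholder} for $K_{\Phi_R}$ gains a factor $\ell(Q_{k+1})/\ell(Q_k)$, while $|\mu-\sigma_R|(3Q_k)\lesssim\mu(Q_k)\approx\ell(Q_k)^n$ by the corona hypothesis and AD-regularity, producing a geometric (and hence summable) series. The contributions from $\R^{n+1}\setminus R$ and from scales above $\ell(R)$ are handled by the linear growth of $\mu$ and the finiteness of $\sigma_R$. This gives $\RR_{\Phi_R,*}\mu(x)\lesssim 1$ for $\mu$-a.e.\ $x$, and Theorem \ref{lem:12.1-Vol} then yields $L^2(\mu)$-boundedness of $\RR_{\mu,\Phi_R}$. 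Lemmas \ref{lem:5.27-llibre} and \ref{lem:var5.26}, together with the $L^p(\mu)$-boundedness of the Hardy--Littlewood maximal function, upgrade this to $L^p(\mu)$-boundedness of $\RR_{\Phi_R,*}$ for every $1<p<\infty$.

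Finally, for $x\in R$ the annular truncations $\RR_Q$ telescope along the chain of cubes from $\tree(R)$ containing $x$, so $K_R(f\mu)(x)$ equals the truncation of $\RR(f\mu)$ at scales $\max(\eta,\ell(x)/2)<|x-y|\leq\ell(R)$. Since $\Phi_R(x)\approx\ell(x)$, the difference between this truncation and the corresponding suppressed truncation of $\RR_{\Phi_R}(f\mu)$ is pointwise dominated by $M_\mu f(x)$ via \eqref{eq:supkerbound}, and the resulting bound $|K_R(f\mu)(x)|\lesssim \RR_{\Phi_R,*}(f\mu)(x)+M_\mu f(x)$ yields the desired estimate uniformly in $R$.
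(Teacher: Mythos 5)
Your proposal captures the right scaffolding at both ends: the suppressed kernel $\Phi_R$, the role of Lemma \ref{lemriesz1} in controlling $\RR_{\Phi_R,*}\sigma_R$, the $Tb$ theorem (Theorem \ref{lem:12.1-Vol}), and the final domination $|K_R(f\mu)(x)|\lesssim \RR_{\Phi_R,*}(\chi_{3R}f\mu)(x)+M_\mu f(x)$, which is essentially Lemma \ref{lem:KboundR*}. But the middle step --- the transfer from $\sigma_R$ to $\mu$ --- is where the argument breaks down, and this is exactly the step the paper identifies as "non-trivial" because $\mu$ and $\omega^{p_R}$ may be mutually singular. Your claim that $\RR_{\Phi_R,*}\mu(x)\lesssim 1$ for $\mu$-a.e.\ $x$ is not something one can hope to prove pointwise in this generality, and the shell argument you sketch does not produce it. Concretely: decomposing $\mu-\sigma_R$ over the shells $3Q_k\setminus 3Q_{k+1}$ along the chain of ancestors of $x$ gives, by the size estimate \eqref{eq:supkerbound} and the corona mass bounds, a contribution of order $O(1)$ from \emph{each} shell; you invoke the H\"older estimate \eqref{eq:supkerholder} to gain a factor $\ell(Q_{k+1})/\ell(Q_k)$, but the H\"older estimate only yields a gain when you can subtract off a constant, i.e.\ when the measure on the shell has zero mean. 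Here $\mu(3Q_k\setminus 3Q_{k+1})$ and $\sigma_R(3Q_k\setminus 3Q_{k+1})$ are merely comparable, not equal, so there is no cancellation to exploit. And since the $Q_k$ are consecutive dyadic generations, $\ell(Q_{k+1})/\ell(Q_k)$ is a fixed constant $\approx 1/2$, not a decaying factor --- so even formally the series is not geometric, and the sum grows like the number of generations between $\ell(R)$ and $\Phi_R(x)$, i.e.\ logarithmically.

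The paper's route is genuinely different at this point. Rather than aiming at a pointwise bound for $\RR_{\Phi,*}\mu$, it proves $\RR_{\Phi,*}\sigma\lesssim 1$ only for $\sigma=\mu(R)\,\omega^{p_R}|_{3R}$, obtains $L^2(\sigma)$-boundedness of $\RR_{\sigma,\Phi}$ from Theorem \ref{lem:12.1-Vol}, and then \emph{transfers operator bounds}, not pointwise bounds, over to $\mu$. This requires introducing the regularized family $\reg(R)$ (absent from your proposal), proving the mass-comparison Lemma \ref{lem:reg-growth} which lets the suppressed maximal operators map $L^p(\sigma)$ into $L^p(\wt\mu)$ (Lemma \ref{lem:maximal}), deducing $\RR_{\sigma,\Phi}:L^p(\sigma)\to L^p(\wt\mu)$ via a Cotlar-type inequality (Lemmas \ref{lem:var5.26} and \ref{lemsigma2}), dualizing, and then running a Calder\'on--Zygmund decomposition in Lemma \ref{lemsigma3} that, on each $Q\in\reg(R)$, replaces $f\,\wt\mu|_Q$ by a multiple of $\sigma|_{aQ}$ carrying the same mass; the difference $\nu_Q$ then \emph{does} have mean zero, and only there does the H\"older estimate \eqref{eq:supkerholder} do its work. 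On the complementary set $3R\setminus\bigcup_{\reg(R)}Q$, the absolute continuity $d\sigma=h\,d\mu$ with $h\approx 1$ (Lemma \ref{lemfac999}) carries the estimate over directly. The zero-mean normalization per stopping cube is the missing idea in your proof; without it there is no mechanism to overcome the possible mutual singularity of $\mu$ and $\omega^{p_R}$.
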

\vv

We need to consider the following auxiliary function:
$$\Phi(x) = \inf_{Q\in\tree(R)} \bigl(\dist(x,Q) + \ell(Q)\bigr),\quad  x\in\R^{n+1}.$$
Notice that $\Phi$ is $1$-Lipschitz.
Next we define a ``regularized'' family  $\reg$ of cubes.
 For each $x\in
\supp\mu$ such that $\Phi(x)\neq0$, let $Q_x$ be a dyadic cube from
$\DD_\mu$ containing $x$ such that $$\frac{\Phi(x)}{2} <
\ell(Q_x) \leq {\Phi(x)}.$$ 
If $\Phi(x)=0$, we set $Q_x=\{x\}$.
Then, $\reg(R)$ is a maximal
(and thus disjoint) subfamily of $\{Q_x\}_{x\in 3R,\Phi(x)>0}$. 
Note that not all the cubes from $\reg(R)$ are contained
in $R$.
\vv

\begin{lemma} \label{lemregul} The family $\reg(R)$ satisfies:
\begin{itemize}
\item[(a)] $\bigcup_{Q\in\reg(R)\cap\DD_\mu(R)} Q\subset
\bigcup_{Q\in\sss(R)}Q$.
\vv
\item[(b)] If $P,Q\in \reg(R)$ and $2P\cap 2Q\neq
\varnothing$, then $\ell(Q)/2 \leq \ell(P) \leq 2 \ell(Q)$.
\vv
\item[(c)] If $Q\in
\reg(R)$, $Q\cap 3R\neq\varnothing$, and $x\in Q$, $r\geq \ell(Q)$, then 
$$\omega^{p_R}(B(x,r)) \lesssim r^n/\mu(R).$$
\vv
\item[(d)] For each $Q\in \reg(R)$ with $Q\cap 3R\neq\varnothing$, there exists some cube $\wt{Q}\in\tree(R)$
 such that 
 $$\ell(\wt Q)\approx\ell(Q)\quad \mbox{ and }\quad \dist(Q,\wt Q)\lesssim \ell(Q).$$
\end{itemize}
\end{lemma}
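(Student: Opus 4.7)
The plan rests on two elementary principles that drive all four parts. First, $\Phi$ is $1$-Lipschitz, being an infimum of $1$-Lipschitz functions in $x$. Second, by construction $\ell(Q_x) \in (\Phi(x)/2, \Phi(x)]$, so $\ell(Q_x) \approx \Phi(x)$. Combined with the tree structure of $\tree(R)$ and the corona hypothesis on $\omega^{p_R}$ (in the form $\omega^{p_R}(3Q') \lesssim \mu(Q')/\mu(R)$ for $Q' \in \tree(R)$), these principles will do all the work.

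For (a), I would argue by contradiction. Suppose $Q_x \in \reg(R) \cap \DD_\mu(R)$ but $x \notin \bigcup_{S \in \sss(R)} S$. Then at every scale $j$ with $2^{-j} \leq \ell(R)$, the dyadic cube $Q_j \in \DD_{\mu,j}$ containing $x$ satisfies $Q_j \subset R$, and it must lie in $\tree(R)$: otherwise $Q_j \subsetneq S$ for some $S \in \sss(R)$, forcing $x \in S$. Hence $\Phi(x) \leq \ell(Q_j) \to 0$, contradicting $\Phi(x)>0$. So $x \in S$ for some $S \in \sss(R)$; using $S \in \tree(R)$ in the infimum gives $\Phi(x) \leq \ell(S)$, whence $\ell(Q_x) \leq \ell(S)$, and dyadic nesting applied to $x \in Q_x \cap S$ yields $Q_x \subset S$.

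For (d), given $Q = Q_y$ with $\Phi(y) \in [\ell(Q), 2\ell(Q))$, I would pick $Q' \in \tree(R)$ nearly attaining the infimum, say with $\dist(y, Q') + \ell(Q') < 3\ell(Q)$. If $\ell(Q') \geq \ell(Q)/2$ take $\wt Q = Q'$; otherwise replace $Q'$ with its dyadic ancestor of side length in $[\ell(Q), 2\ell(Q))$, which still lies in $\tree(R)$ by the tree property, provided this scale does not exceed $\ell(R)$. In the exceptional regime $\ell(Q) \gtrsim \ell(R)$ (which may occur since $\Phi(y) \leq 3\ell(R)$ by taking $Q'=R$ in the infimum) one simply sets $\wt Q = R$ and uses $Q \cap 3R \neq \varnothing$ to control $\dist(Q,R)$. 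The bound $\dist(Q, \wt Q) \lesssim \ell(Q)$ follows from $\dist(y, Q') < 3\ell(Q)$ together with $y \in Q$. Then (c) follows quickly: for $r \geq \ell(R)$ it is immediate from $\omega^{p_R} \leq 1$ and $r^n \gtrsim \ell(R)^n \approx \mu(R)$; for $\ell(Q) \leq r < \ell(R)$ let $\wt Q^*$ be the dyadic ancestor of $\wt Q$ of side length in $[r, 2r)$, still in $\tree(R)$ by the tree property, and observe that $B(x,r) \cap \supp\mu \subset C \wt Q^*$ for a universal $C$ since $\dist(Q, \wt Q) \lesssim r$, so the corona hypothesis and AD-regularity give
$$\omega^{p_R}(B(x,r)) \leq \omega^{p_R}(3\wt Q^*) \lesssim \frac{\mu(\wt Q^*)}{\mu(R)} \lesssim \frac{r^n}{\mu(R)}.$$

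For (b), I would start from a point $z \in 2P \cap 2Q$ and use $\diam(P) \leq \ell(P)$ and $\dist(z, P) \leq \ell(P)$ (and similarly for $Q$) to bound $|x-y| \leq 2\ell(P) + 2\ell(Q)$. The $1$-Lipschitz property of $\Phi$ together with $\ell(P) \approx \Phi(x)$ and $\ell(Q) \approx \Phi(y)$ then yields $\ell(P) \approx \ell(Q)$ up to a universal constant. The main obstacle is tightening that constant to the factor $2$ written in the statement: a direct Lipschitz computation only delivers a universal but larger comparability constant, and achieving precisely $2$ would require a more delicate analysis exploiting the dyadic quantization of $\ell(Q_x)$ (essentially that $\ell(Q_x) = 2^{\lfloor\log_2\Phi(x)\rfloor}$, so nearby values of $\Phi$ produce side lengths differing by at most a single dyadic step). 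Fortunately the exact value of the constant is immaterial for the downstream use of Lemma \ref{lemregul} in Section \ref{sec6}, where only the comparability of $\ell(P)$ and $\ell(Q)$ is needed.
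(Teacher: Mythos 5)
Your arguments for (a) and (d) are correct, and (c) is essentially right modulo some loose bookkeeping: after deducing $B(x,r)\cap\supp\mu\subset C\wt Q^*$ for some universal $C$, you immediately pass to $\omega^{p_R}(3\wt Q^*)$, which requires $C\le 3$; since the $C$ you obtain comes from the implicit constant in $\dist(Q,\wt Q)\lesssim\ell(Q)$ and may well exceed $3$, you should instead take an ancestor $\wt Q^*$ at a scale comparable to $Cr$, truncating at $\ell(R)$ and absorbing the regime $r\gtrsim\ell(R)$ into your first case. This is routine.

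The argument you propose for (b), however, has a genuine gap and in fact yields nothing, not merely a worse constant than $2$. Assume $\ell(P)\geq\ell(Q)$. Your bound $|x-y|\leq 2\ell(P)+2\ell(Q)$ together with the $1$-Lipschitz property of $\Phi$ gives $\Phi(x)\leq\Phi(y)+|x-y|$; plugging in $\ell(P)\leq\Phi(x)$ and $\Phi(y)<2\ell(Q)$ produces only $\ell(P)<2\ell(P)+4\ell(Q)$, which is vacuous, and there is no way to rearrange these inequalities into an upper bound on $\ell(P)/\ell(Q)$. The obstruction is structural: because $\ell(Q_x)$ is chosen in $\bigl(\Phi(x)/2,\Phi(x)\bigr]$ — a substantial fraction of $\Phi(x)$ rather than a small one — the function $\Phi$ may change by an amount comparable to $\Phi(x)$ across a single cube $Q_x$ (indeed it may vanish inside $Q_x$), and the displacement $|x-y|\approx\ell(P)$ therefore swamps the target quantity $\Phi(y)\approx\ell(Q)$ in the Lipschitz estimate. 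So your claim that ``the $1$-Lipschitz property \dots then yields $\ell(P)\approx\ell(Q)$ up to a universal constant'' is not correct as stated. Establishing (b), even with a universal constant, requires something beyond Lipschitzness of $\Phi$: either a chaining argument relating the generating points of $P$ and $Q$ through intermediate cubes of the family, or a regularization with a small parameter ($\ell(Q_x)\leq c\,\Phi(x)$ with $c\ll1$), under which $\Phi$ is genuinely comparable to a constant on each $Q_x$ and the Whitney-type argument goes through; the latter is the setting of the references the authors cite. Your remark that only a universal comparability constant is used in Section \ref{sec6} is correct, but as written the proposal does not establish even that.
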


The proof of this lemma follows by standard arguments (see e.g. Theorem 8.2 in \cite{Tolsa-Annals} or Lemma 3 in \cite{G-S}). 

We remark, that abusing notation, we may also think of the points $x\in\R^d$ such that $\Phi(x)=0$ as cubes
with side length $0$. Then, if we enlarge the family $\reg(R)$ by adding these cubes consisting of
a single point, the resulting family, call it $\wh\reg(R)$, also satisfies the properties in the preceding lemma.

By the properties in Lemma \ref{lemregul} and an easy application of the Lebesgue differentiation
theorem, one can see that, module a set of zero $\mu$ and $\omega^{p_R}$ measure. 
\begin{equation}\label{eqzz0}
R\setminus \bigcup_{Q\in\sss(R)} Q = \supp\mu\setminus \bigcup_{Q\in\reg(R)} Q
\end{equation}

\vv
From now on, in this subsection we denote by $\wt\mu$ and $\sigma$ the measures
$$\wt\mu = \mu|_{\bigcup_{Q\in\reg(R)} 3R\cap Q},\qquad
\sigma = \mu(R)\,\omega^{p_R}|_{3R}.$$

\vv
\begin{lemma}\label{lemgrosimga}
For all $x\in3R$, we have
\begin{equation}\label{eqakt88}
\sigma(B(x,r))\leq C\,r^n\quad \mbox{ for all $r\geq \Phi(x)$.}
\end{equation}
\end{lemma}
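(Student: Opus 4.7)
The plan is to reduce the estimate to the density hypothesis $\omega^{p_R}(3Q) \approx \mu(Q)/\mu(R)$ for $Q \in \tree(R)$ by producing, for each $x \in 3R$ and each $r \geq \Phi(x)$, a single cube $\tilde Q \in \tree(R)$ of side length comparable to $r$ whose triple contains $B(x,r) \cap \supp\mu$. The large-scale case $r \geq \ell(R)/4$ is trivial, as $\sigma(B(x,r)) \leq \mu(R) \lesssim \ell(R)^n \lesssim r^n$; so I assume $r < \ell(R)/4$.

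The condition $r \geq \Phi(x)$ together with the infimum defining $\Phi$ furnishes some $Q_0 \in \tree(R)$ with $\dist(x, Q_0) + \ell(Q_0) < 2r$, and in particular $\ell(Q_0) < 2r < \ell(R)/2$. I then let $\tilde Q$ be the unique dyadic ancestor of $Q_0$ with $\ell(\tilde Q) \in [2r, 4r)$; this exists because $\ell(Q_0) < 2r$ and $\ell(R) \geq 4r$. The dyadic size bookkeeping $\ell(Q_0) < \ell(\tilde Q) < \ell(R)$ forces the nesting $Q_0 \subsetneq \tilde Q \subsetneq R$, and since $Q_0, R \in \tree(R)$, property~(2) of a tree places $\tilde Q \in \tree(R)$.

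To conclude, I would check $B(x, r) \cap \supp\mu \subset 3\tilde Q$ by the triangle inequality: any $y$ in this set satisfies $\dist(y, \tilde Q) \leq |y - x| + \dist(x, Q_0) < r + 2r = 3r \leq 2\ell(\tilde Q)$, using $\ell(\tilde Q) \geq 2r$, which is exactly the condition $y \in 3\tilde Q$. Plugging into the hypothesis of Proposition~\ref{propo12} then gives $\omega^{p_R}(B(x,r)) \leq \omega^{p_R}(3\tilde Q) \lesssim \mu(\tilde Q)/\mu(R) \lesssim r^n/\mu(R)$, and multiplying by $\mu(R)$ yields \eqref{eqakt88}. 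I do not foresee a genuine obstacle; the only step requiring care is verifying that $\tilde Q$ sits strictly between $Q_0$ and $R$ so that property~(2) of the tree can be invoked, and this is handled cleanly by the explicit bounds $\ell(Q_0) < 2r \leq \ell(\tilde Q) < 4r \leq \ell(R)$.
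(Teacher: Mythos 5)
Your proposal is correct and follows essentially the same route as the paper: after disposing of the case $r\gtrsim\ell(R)$ trivially, you pass from a near-infimizing cube $Q_0\in\tree(R)$ to an ancestor $\wt Q$ with $\ell(\wt Q)\approx r$ and $B(x,r)\cap\supp\mu\subset 3\wt Q$, note $\wt Q\in\tree(R)$ by the tree coherence property, and invoke $\omega^{p_R}(3\wt Q)\lesssim\mu(\wt Q)/\mu(R)$ together with AD-regularity. The paper's proof is the same argument, merely absorbing the large-scale case by allowing the ancestor $S$ to contain $R$.
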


\begin{proof}
By the definition of $\Phi(x)$, there exists some $Q\in\tree(R)$ such that
$$
\Phi(x) \leq 2\bigl(\dist(x,Q) + \ell(Q)\bigr).
$$
Hence there exists some cube
$S\in\DD_\mu$ such that $S\supset Q$ and $3S\supset B(x,r)$ with $\ell(S)\approx r$.
In particular, either $S\in\tree(R)$ or $S\supset R$ and since
$$\omega^{p_R}(3S)\lesssim \frac{\mu(S)}{\mu(R)},$$
we have
$$\sigma(B(x,r))\leq\sigma(3S) = \mu(R)\,\omega^{p_R}(3S\cap 3R) \lesssim  \mu(S)
\approx \ell(S)^n\approx r^n.$$
\end{proof}

\vv
From the last lemma it follows that $$\Phi(x)\geq \rho_\sigma(x),$$
if we choose the constant $C_0$ in the definition \rf{eqrho} of $\rho_\sigma$ to be the constant 
$C$ on the right hand side of \rf{eqakt88}.
\vv

\begin{lemma}\label{lemfac999}
In $3R\setminus\bigcup_{Q\in\reg(R)} Q$, we have $d\sigma(x) = h(x)\,d\mu(x)$, with $h(x)\approx1$. 
\end{lemma}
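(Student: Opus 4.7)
The plan is to show that on $E := 3R\setminus\bigcup_{Q\in\reg(R)}Q$ the measure $\sigma$ is absolutely continuous with respect to $\mu$ with Radon--Nikodym derivative pinned between two positive constants. By \rf{eqzz0}, modulo sets of $\mu$- and $\omega^{p_R}$-measure zero, $E$ coincides with $R\setminus\bigcup_{Q\in\sss(R)} Q$, so I may restrict attention to points $x\in R$ which lie in an infinite nested sequence of cubes $Q_k\in\tree(R)$ with $\ell(Q_k)\downarrow 0$.

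For the upper bound I will use the remark following Lemma \ref{lemregul}: enlarge $\reg(R)$ to $\wh\reg(R)$ by appending the singleton ``cubes'' $\{x\}$ at every $x$ with $\Phi(x)=0$, so that property (c) of that lemma remains valid in $\wh\reg(R)$. Applying (c) with $\ell(\{x\})=0$ yields $\omega^{p_R}(B(x,r))\lesssim r^n/\mu(R)$ for every $r>0$ and every such $x$, hence
$$\sigma(B(x,r))\leq \mu(R)\,\omega^{p_R}(B(x,r))\lesssim r^n\approx \mu(B(x,r)),$$
by AD-regularity of $\mu$. Standard differentiation of Radon measures then gives $\sigma|_E\leq C\,\mu|_E$ as measures, so that $\sigma|_E\ll\mu|_E$ and the density $h$ satisfies $h\leq C$ $\mu$-a.e. on $E$.

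For the lower bound I fix such a point $x$ and any small $r>0$, and pick the unique $Q_k\in\tree(R)$ with $x\in Q_k$ and $r/6<\ell(Q_k)\leq r/3$. Since $x$ is not in any stopping cube, such $Q_k$ exists for all sufficiently small $r$. The inclusion $Q_k\subset R$ forces $3Q_k\subset 3R$ (any $y\in 3Q_k$ satisfies $\dist(y,R)\leq\dist(y,Q_k)\leq 2\ell(Q_k)\leq 2\ell(R)$), while $\diam(3Q_k)\leq 3\ell(Q_k)\leq r$ together with $x\in Q_k$ forces $3Q_k\subset\overline{B(x,r)}$. The corona hypothesis $\omega^{p_R}(3Q_k)\approx\mu(Q_k)/\mu(R)$ then gives
$$\sigma(B(x,r))\geq \sigma(3Q_k)=\mu(R)\,\omega^{p_R}(3Q_k)\approx\mu(Q_k)\approx r^n\approx \mu(B(x,r)).$$
Since this holds for \emph{every} sufficiently small $r$, we obtain $\liminf_{r\to 0}\sigma(B(x,r))/\mu(B(x,r))\gtrsim 1$ for $\mu$-a.e.\ $x\in E$, which by the Besicovitch differentiation theorem forces $h\gtrsim 1$ $\mu$-a.e.\ on $E$.

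The main delicacy lies less in analytic estimates than in bookkeeping: first, in the identification of $E$ with $R\setminus\bigcup_{Q\in\sss(R)}Q$ via \rf{eqzz0}, which permits the use of cubes of $\tree(R)$ at arbitrarily small scales; and second, in ensuring $3Q_k\subset 3R$ so that $\sigma(3Q_k)=\mu(R)\,\omega^{p_R}(3Q_k)$ has no boundary loss, and in choosing $Q_k$ with side length \emph{comparable} to $r$ so that the lower bound applies uniformly in $r$ (not merely along a subsequence), which is what the differentiation theorem demands. All other ingredients are direct consequences of the corona hypothesis and AD-regularity.
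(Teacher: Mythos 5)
Your proof is correct and follows the same approach as the paper, which simply invokes the Lebesgue differentiation theorem; you have supplied the upper and lower density estimates (via Lemma \ref{lemregul}(c) applied to $\wh\reg(R)$ and via the corona hypothesis on $3Q_k$ with $\ell(Q_k)\approx r$) that make the one-line argument rigorous. One small slip: $\diam(3Q_k)\leq 5\ell(Q_k)$, not $3\ell(Q_k)$, but the intended inclusion $3Q_k\subset\overline{B(x,r)}$ still holds because any $y\in 3Q_k$ satisfies $|x-y|\leq \dist(y,Q_k)+\diam(Q_k)\leq 3\ell(Q_k)\leq r$.
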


\begin{proof}
By the Lebesgue differentiation theorem, it follows that
$\omega^{p_R}\ll\mu$ in $R\setminus\bigcup_{Q\in\sss(R)} Q$, with 
$d\omega^{p_R}=h(x)\frac1{\mu(R)}\mu$ and $h(x)\approx1$ on this set, which yields the lemma.
\end{proof}

\vv

\begin{lemma}\label{lem:reg-growth}
If $Q\in \reg(R)$ and $x\in Q$, $\ell(Q)\leq r\leq 100\,\diam(3R)$, then there exists some constant $b\geq 1$ such that 
\begin{equation}\label{eq:reg-growth}
\mu(B(x,r)) \lesssim \omega^{p_R}(B(x, br) \cap 3R)\, \mu(R).
\end{equation}
The bounds on the constant $b$ only depend on the parameters of the construction of $\DD_\mu$.
\end{lemma}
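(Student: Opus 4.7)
The plan is to reduce this to the defining property of the corona decomposition, namely $\omega^{p_R}(3P)\approx \mu(P)/\mu(R)$ for $P\in\tree(R)$, applied to an appropriate ancestor of a tree-cube close to $Q$ at scale comparable to $r$.

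First, by Lemma~\ref{lemregul}(d) there exists $\wt Q\in\tree(R)$ with $\ell(\wt Q)\approx \ell(Q)$ and $\dist(Q,\wt Q)\lesssim \ell(Q)$. Assume initially that $\ell(Q)\leq r\leq \ell(R)$. I let $P\in\DD_\mu$ be the smallest ancestor of $\wt Q$ with $\ell(P)\geq C_1 r$ for a suitable absolute constant $C_1$, so $\ell(P)\approx r$. Since $\wt Q\subset P\subset R$ and both $\wt Q$ and $R$ lie in $\tree(R)$, the tree property (item (2) in Subsection~\ref{subsecorona}) forces $P\in\tree(R)$, so the corona hypothesis gives
$$\omega^{p_R}(3P)\approx \frac{\mu(P)}{\mu(R)}\approx \frac{r^n}{\mu(R)}.$$

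Next I would verify the geometric inclusion $3P\subset B(x,br)\cap 3R$ for some absolute $b\geq 1$. Since $x\in Q$ and $\dist(Q,\wt Q)\lesssim \ell(Q)\leq r$, we have $\dist(x,P)\leq \diam(Q)+\dist(Q,\wt Q)\lesssim r$, and $\diam(3P)\lesssim \ell(P)\approx r$; hence $3P\subset B(x,br)$ by the triangle inequality. The inclusion $3P\subset 3R$ is immediate from $P\subset R$ and $\ell(P)\leq \ell(R)$. Combining these with the AD-regularity estimate $\mu(B(x,r))\lesssim r^n$ yields
$$\mu(B(x,r))\lesssim r^n\approx \omega^{p_R}(3P)\,\mu(R)\leq \omega^{p_R}(B(x,br)\cap 3R)\,\mu(R),$$
which is the desired inequality.

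It remains to handle $\ell(R)<r\leq 100\diam(3R)$, where I would simply take $P=R$. Because $\dist(x,R)\lesssim \ell(Q)\lesssim \ell(R)\lesssim r$ and $\diam(3R)\approx \ell(R)\lesssim r$, we get $3R\subset B(x,br)$ for some absolute $b$, while $\omega^{p_R}(3R)\approx 1$ (either by Bourgain's estimate, or directly from the hypothesis at the root). Then
$$\mu(B(x,r))\lesssim r^n\lesssim \ell(R)^n\approx \mu(R)\approx \omega^{p_R}(3R)\,\mu(R)\leq \omega^{p_R}(B(x,br)\cap 3R)\,\mu(R),$$
finishing the argument. I do not anticipate a serious obstacle: the only mildly delicate point is ensuring that the ancestor $P$ of $\wt Q$ at scale $\approx r$ still belongs to $\tree(R)$, but this is forced by the tree property since $\wt Q\subset P\subset R$ with both endpoints in $\tree(R)$.
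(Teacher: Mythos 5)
Your argument is correct and is essentially the paper's own proof: both locate a cube of $\tree(R)$ at scale comparable to $\min(r,\ell(R))$ near $x$ (the paper directly from the definition of $\reg(R)$, you via Lemma \ref{lemregul}(d) together with passing to an ancestor, which stays in $\tree(R)$ by the tree property) and then apply the corona estimate $\omega^{p_R}(3P)\approx \mu(P)/\mu(R)$, with the root $R$ handling the range $r\gtrsim \ell(R)$. The only cosmetic difference is that you bound $\mu(B(x,r))\lesssim r^n$ by AD-regularity, whereas the paper bounds it through the inclusion $B(x,r)\cap\supp\mu\subset 3Q''$; just make sure the constant $C_1$ is chosen (e.g.\ $C_1=1$, or capping $P$ at $R$) so that the selected ancestor indeed satisfies $P\subset R$.
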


\begin{proof}
Given $x \in Q$, there exist $Q'\in \tree(R)$ and $x_{Q'} \in Q'$ such that 
$$\ell(Q) \leq |x-x_{Q'}| + \ell(Q')\leq 2\ell(Q).$$ 
From the above inequalities it is clear that $|x-x_{Q'}| \leq 2 r$ and $\ell(Q')/2 \leq r$. Therefore there are two cases:

\noindent {\bf Case (i):} There exists $Q'' \in \tree(R)$ such that $B(x, r)\cap\supp\mu \subset 3 Q''$ and $\ell(Q'') \approx r$. Trivially, one can find $b \approx 1$ such that  $Q'' \subset B(x, b r) \cap 3R$. Therefore, 
\begin{align*}
\mu(B(x,r)) &\leq  \mu(3 Q'')\lesssim \mu(Q'')\lesssim \hm^{p_R} (3Q'')\, \mu(R) \leq  \hm^{p_R}(B(x, b r) \cap 3R)\,\mu(R),
\end{align*}
where in the penultimate inequality we used that $\omega^{p_R}(3Q'')\approx \frac{\mu(Q'')}{\mu(R)}$.

\noindent {\bf Case (ii):} $3R \subset B(x,r)$. Since $r\leq100\,\diam(3R)$, for some $C>1$,
\begin{align*}
\mu(B(x,r)) &\leq \mu(CR) \lesssim  \hm^{p_R} (R)\, \mu(R) \leq  \hm^{p_R}(B(x, r) \cap 3R)\,\mu(R),
\end{align*}
which concludes our lemma.
 \end{proof}

 \vv
\begin{lemma}\label{lem:maximal}
The operators $M_{\sigma,b, \Phi}$ and $M^r_{\sigma,\Phi}$ are bounded from $L^p(\sigma)$ to $L^p(\wt\mu)$, $1<p\leq\infty$,
and from $L^1(\sigma)$ to $L^{1,\infty}(\wt\mu)$, with norms depending $p$.
\end{lemma}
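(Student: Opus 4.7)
The plan is to establish the weak-type $(1,1)$ inequality $M_{\sigma,b,\Phi}\colon L^1(\sigma)\to L^{1,\infty}(\wt\mu)$ via a Vitali covering argument, combine it with the trivial $L^\infty(\sigma)\to L^\infty(\wt\mu)$ estimate, and interpolate by Marcinkiewicz to obtain $L^p(\sigma)\to L^p(\wt\mu)$ for every $1<p\leq\infty$. The corresponding bounds for $M^r_{\sigma,\Phi}$ will then fall out of the pointwise domination $M^r_{\sigma,\Phi}f\lesssim M_{\sigma,b,\Phi}f$, which follows immediately from the polynomial growth $\sigma(B(x,br))\lesssim r^n$ for $x\in\supp\wt\mu$ and $r\geq\Phi(x)$ furnished by Lemma \ref{lemgrosimga}.

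For the weak-type step, I would fix $f\in L^1(\sigma)$ and $\lambda>0$, set $E_\lambda:=\{x\in\supp\wt\mu:M_{\sigma,b,\Phi}f(x)>\lambda\}$, and for each $x\in E_\lambda$ select a witness radius $r_x\geq\Phi(x)$ with
$$\int_{B(x,br_x)}|f|\,d\sigma>\lambda\,\sigma(B(x,br_x)).$$
After truncating the $r_x$ by a large constant (and later removing the truncation by monotone convergence), Vitali's $5r$-covering lemma produces a disjoint subfamily $\{B(x_i,br_i)\}$ whose $5$-dilates cover $E_\lambda$.

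The heart of the argument is the pointwise estimate $\wt\mu(B(x_i,5br_i))\lesssim\sigma(B(x_i,br_i))$. Since $x_i\in\supp\wt\mu$ lies in some $Q_i\in\reg(R)$ with $\ell(Q_i)\approx\Phi(x_i)\leq r_i$, I would apply Lemma \ref{lem:reg-growth} at the scale $5br_i$, invoke Lemma \ref{lemgrosimga} for polynomial growth, and then reverse direction using the AD-regularity of $\mu$ together with a second application of Lemma \ref{lem:reg-growth} at the scale $r_i$ to chain
$$\wt\mu(B(x_i,5br_i))\leq\mu(B(x_i,5br_i))\lesssim\sigma(B(x_i,5b^2r_i))\lesssim r_i^n\approx\mu(B(x_i,r_i))\lesssim\sigma(B(x_i,br_i)).$$
Summing over the disjoint family and exploiting the stopping inequality then yields
$$\wt\mu(E_\lambda)\lesssim\sum_i\sigma(B(x_i,br_i))\leq\frac{1}{\lambda}\sum_i\int_{B(x_i,br_i)}|f|\,d\sigma\leq\frac{1}{\lambda}\|f\|_{L^1(\sigma)},$$
which is the desired weak-type bound.

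The $L^\infty(\sigma)\to L^\infty(\wt\mu)$ bound is immediate from the definition, so Marcinkiewicz interpolation completes the case of $M_{\sigma,b,\Phi}$, and the pointwise comparison transfers everything to $M^r_{\sigma,\Phi}$. The main technical obstacle I anticipate is purely one of bookkeeping: Lemma \ref{lem:reg-growth} forces the ball radius to inflate by the fixed constant supplied by that lemma upon passing from $\mu$ to $\sigma$, so the parameter $b$ in the definition of $M_{\sigma,b,\Phi}$ must be chosen at least as large as that constant in order for the chain of inequalities above to close. One must also carefully verify that whenever $r\geq\Phi(x_i)$ one may legitimately invoke Lemma \ref{lem:reg-growth}, which requires checking (from the definitions of $\Phi$ and $\reg(R)$) that $\Phi$ is comparable to $\ell(Q_i)$ throughout the cube $Q_i$, so that the scale hypothesis $r\geq\ell(Q_i)$ is available — this is the only place where the precise construction of the regularized family enters the argument.
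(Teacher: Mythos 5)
Your proposal is correct and follows essentially the same strategy as the paper: a Vitali covering argument for the weak-type $(1,1)$ bound on $M_{\sigma,b,\Phi}$, combined with the trivial $L^\infty$ bound and Marcinkiewicz interpolation, with Lemma \ref{lem:reg-growth} supplying the key comparison between $\mu$ and $\sigma$ at the scale of the covering balls. Two small remarks. First, your chain $\wt\mu(5bB_i)\leq\mu(5bB_i)\lesssim\sigma(5b^2B_i)\lesssim r_i^n\approx\mu(B_i)\lesssim\sigma(bB_i)$ takes an unnecessary detour: since $\wt\mu\leq\mu$ and $\mu$ is doubling (AD-regularity), one has directly $\wt\mu(5bB_i)\lesssim\mu(B_i)\lesssim\sigma(bB_i)$, which is exactly what the paper does, with a single application of Lemma \ref{lem:reg-growth} at scale $r_i$; your first application at scale $5br_i$ and the subsequent passage through $\sigma(5b^2B_i)\lesssim r_i^n$ merely re-derive the doubling of $\mu$ and add nothing. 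Second, your treatment of $M^r_{\sigma,\Phi}$ via the pointwise domination $M^r_{\sigma,\Phi}f\lesssim M_{\sigma,b,\Phi}f$ (which is correct, by Lemma \ref{lemgrosimga}) is a genuine, small departure from the paper: the paper handles $M^r_{\sigma,\Phi}$ by citing the literature rather than by domination. Your route is arguably cleaner and more self-contained. The bookkeeping points you flag at the end -- the need for the parameter $b$ in the maximal operator to match the one furnished by Lemma \ref{lem:reg-growth}, and the comparability $\Phi(x)\approx\ell(Q)$ for $x\in Q\in\reg(R)$ so that the hypothesis $r\gtrsim\ell(Q)$ of Lemma \ref{lem:reg-growth} is available -- are exactly the points that need checking, and they do go through by the standard properties of the regularized family.
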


 \begin{proof}
The boundedness of $M^r_{\sigma,\Phi}$ is well known; see for instance in the proof of Lemma 7.6 in \cite{Tolsa-pubmat}. Concerning $M_{\sigma, b,\Phi}$, it is enough to show that this is bounded from $L^1(\sigma)$ to $L^{1,\infty}(\wt\mu)$, since this is trivially bounded from  $L^\infty(\sigma)$ to $L^\infty(\wt\mu)$. To this end, let $f \in L^1(\sigma)$ and for fixed $\lambda>0$ set 
$$ \Omega_\lambda:= \Bigl\{ x \in \textstyle{\bigcup_{Q\in\reg(R)}} 3R\cap Q: M_{\sigma,b, \Phi}f(x)>\lambda\Bigr\}.$$
By definition, for each $x \in \Omega_\lambda$, there exists a ball $B_x$ centered at $x$, with radius $r(B_x)\geq\Phi(x)$ such that
$$ \frac{1}{\lambda} \int_{bB_x} |f(y)| \,d\sigma(y)> \sigma(bB_x)=\hm^{p_R}(bB_x \cap 3R)\,\mu(R).$$
Further, we may assume that $r(B_x)\leq\diam(3R)$ because $\supp\sigma\cup \supp\wt\mu\subset \overline{3R}$.
By Vitali's $5r$-covering lemma, we can find a countable family of balls $\{B_i\}$ such that $b B_i \cap b B_j = \varnothing$ for $i \neq j$, and 
$$\bigcup_{x \in \Omega_\lambda} B_x \subset \bigcup_i 5b B_i.$$
Since each ball $B_i$ is centered at some point $x\in Q\in \reg(R)$, by Lemma \ref{lem:reg-growth} we have that
$$\mu(B_i)\leq \hm^{p_R}(b B_i \cap 3R)\,\mu(R).$$
Then we deduce
\begin{align*}
\wt \mu (\Omega_\lambda) &\leq \sum_i \wt \mu(5b B_i) \lesssim_b \sum_i \mu( B_i) \lesssim \sum_i  \hm^{p_R}(b B_i \cap 3R)\,\mu(R) \\
&\leq \sum_i  \frac{1}{\lambda} \int_{b B_{i}} |f(y)| \,d\sigma(y) \leq \frac{\|f\|_{L^1(\sigma)}}{\lambda},
\end{align*}
which finishes our proof.
\end{proof}
\vv

\begin{lemma}\label{lemsigma1}
The operator $\RR_{\sigma,\Phi}$ is bounded in $L^p(\sigma)$, $1<p<\infty$, and from $L^1(\sigma)$ to $L^{1,\infty}(\sigma)$, with norm depending on $p$.
\end{lemma}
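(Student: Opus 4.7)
The plan is to invoke the Nazarov--Treil--Volberg $Tb$ theorem for suppressed operators, Theorem \ref{lem:12.1-Vol}, to obtain $L^2(\sigma)$-boundedness. Once this is in hand, Lemma \ref{lem:5.27-llibre} gives the $L^1(\sigma)\to L^{1,\infty}(\sigma)$ bound, Marcinkiewicz interpolation covers $1<p<2$, and duality based on the antisymmetry of $K_\Phi$ (so that the adjoint of $\RR_{\sigma,\Phi}$ on $L^p(\sigma)$ is $-\RR_{\sigma,\Phi}$ on $L^{p'}(\sigma)$) extends the estimate to $2<p<\infty$. The real task is therefore the pointwise bound
$$\RR_{\Phi,*}\sigma(x)\lesssim 1\quad\text{for $\sigma$-a.e.\ $x$.}$$

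For the regime $\epsilon>\Phi(x)$, I would first apply Lemma \ref{lem:5.4-llibre} to replace $\RR_{\Phi,\epsilon}\sigma(x)$ by $\RR_\epsilon\sigma(x)$ up to an error controlled by $\sup_{r>\epsilon}\sigma(B(x,r))/r^n$, which is $\lesssim 1$ by Lemma \ref{lemgrosimga}. Next, decompose $\RR_\epsilon\sigma(x)=\mu(R)\,\RR_\epsilon\omega^{p_R}(x)-\mu(R)\int_{|x-y|>\epsilon,\,y\in\partial\Omega\setminus 3R}\frac{x-y}{|x-y|^{n+1}}\,d\omega^{p_R}(y)$; the tail integral is bounded by $\mu(R)/\ell(R)^n\lesssim 1$, since $|x-y|\gtrsim\ell(R)$ whenever $x\in\supp\sigma$ lies well inside $3R$ and $y\notin 3R$. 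The principal term $\mu(R)\,|\RR_\epsilon\omega^{p_R}(x)|$ is then $\lesssim 1$ by Lemma \ref{lemriesz1}, at least in the range $\epsilon>\ell(x)$. In the remaining subregime $\Phi(x)<\epsilon\leq\ell(x)$, which only occurs if $x$ lies in a stopping cube $Q\in\sss(R)$, I would exploit the existence of a nearby cube $\wt Q\in\tree(R)$ of side-length $\approx\Phi(x)$ furnished by Lemma \ref{lemregul}(d) and use the standard Calderón--Zygmund smoothness of the Riesz kernel to transfer the bound known at the scale of $\wt Q$ to $x$, with the error absorbed using the growth control on $\omega^{p_R}$ along $\tree(R)$ coming from the corona hypothesis.

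For the regime $\epsilon\leq\Phi(x)$, I would use the pointwise estimate \rf{eq:supkerbound}: split the integral at $|x-y|=\Phi(x)$; the inner piece is bounded by $\Phi(x)^{-n}\sigma(B(x,\Phi(x)))\lesssim 1$ via Lemma \ref{lemgrosimga}, and the outer piece reduces to the previously treated case $\epsilon=\Phi(x)$.

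The main obstacle is the subregime $\Phi(x)<\epsilon\leq\ell(x)$ for $x$ inside a stopping cube of $\sss(R)$: in this range Lemma \ref{lemriesz1} gives no direct information and the bound must be extracted from a neighbouring cube of $\tree(R)$ through the regularization encoded in $\Phi$ and $\reg(R)$. Additional care is needed for points $x\in\supp\sigma$ lying near the boundary of $R$, where the comparison between $\RR_\epsilon\sigma(x)$ and $\mu(R)\,\RR_\epsilon\omega^{p_R}(x)$ is less immediate; cleanly absorbing these boundary corrections into the $O(1)$ error is the technical heart of the verification.
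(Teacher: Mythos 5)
Your high-level plan coincides with the paper's: invoke the NTV $Tb$ theorem (Theorem \ref{lem:12.1-Vol}) after checking that $\RR_{\Phi,*}\sigma(x)\lesssim 1$ $\sigma$-a.e., then pass to weak $(1,1)$ via Lemma \ref{lem:5.27-llibre} and interpolate. (The paper does not bother with your explicit interpolation/duality step; it just records the $L^p$ and weak $(1,1)$ claims as consequences.) The regularization via $\reg(R)$, the transfer from $\wt Q\in\tree(R)$ to a nearby $Q\in\reg(R)$ using Lemma \ref{lemregul}(d), and the use of Lemma \ref{lemriesz1} to control $\RR_t\omega^{p_R}$ all match the paper.

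The one place where you depart from the paper is the treatment of the tail over $\partial\Omega\setminus 3R$, and this is exactly where you flag an unresolved difficulty. You first apply Lemma \ref{lem:5.4-llibre} to strip off the suppression and then try to bound
$$\mu(R)\int_{|x-y|>\ve,\, y\in(3R)^c}\frac{d\omega^{p_R}(y)}{|x-y|^n},$$
which indeed becomes awkward when $x\in 3R$ lies near the edge of $3R$, because then $|x-y|$ may be small for $y$ just outside $3R$ and you no longer control the measure $\omega^{p_R}|_{(3R)^c}$ by the corona estimate. This is a genuine gap as you wrote it. The paper sidesteps it entirely by \emph{not} de-suppressing for this part: it splits
$$\RR_{\Phi,*}\bigl[\chi_{3R}\omega^{p_R}\bigr](x)\leq \RR_{\Phi,*}\omega^{p_R}(x)+\RR_{\Phi,*}\bigl[\chi_{(3R)^c}\omega^{p_R}\bigr](x)$$
and observes that every $Q\in\tree(R)$ lies in $R$, hence $\Phi(y)\geq\dist(y,R)\gtrsim\ell(R)$ for $y\in(3R)^c$, so $|K_\Phi(x,y)|\lesssim\Phi(y)^{-n}\lesssim\ell(R)^{-n}$ and the tail is $\lesssim\ell(R)^{-n}\lesssim\mu(R)^{-1}$ for \emph{every} $x$. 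If you want to salvage your de-suppressed decomposition, the same observation $\Phi(x)\geq\dist(x,R)$ closes the gap from the other side: in your regime $\ve>\Phi(x)$, if $\dist(x,R)\geq\ell(R)$ then $\ve>\Phi(x)\geq\ell(R)$ and the tail integral is trivially $\lesssim\ve^{-n}\lesssim\ell(R)^{-n}$; while if $\dist(x,R)<\ell(R)$ then every $y\notin 3R$ has $|x-y|\geq\dist(y,R)-\dist(x,R)>\ell(R)$, again giving $\lesssim\ell(R)^{-n}$. Either fix works, but the paper's version — leave the suppression on, use the strong decay $|K_\Phi|\lesssim\Phi(y)^{-n}$ — is the intended and cleaner route, and it is also needed to handle the regime $\ve\le\Phi(x)$ in a single sweep. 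A secondary point: Lemma \ref{lemriesz1} is stated only for $x\in R$, so your reduction to stopping cubes $\sss(R)$ must go through $\reg(R)$ (which also covers $x\in 3R\setminus R$) as the paper does; your write-up blurs $\sss(R)$ and $\reg(R)$ in a way that should be tightened.
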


\begin{proof}
We first prove that $\RR_{\sigma,\Phi}$ is bounded in $L^p(\sigma)$  for $1<p<\infty$. 
Taking into account Lemma \ref{lemgrosimga}, by Theorem \ref{lem:12.1-Vol}, it is enough to show that
\begin{equation}\label{eq:sigma*9}
\RR_{\Phi, *}\sigma (x)\lesssim 1 \quad \mbox{ for all $x \in 3R$.}
\end{equation}

For any $x \in  3R$, we write
\begin{equation}\label{eq:Riesz-split}
\RR_{\Phi, *}[\chi_{3R}\,\hm^{p_R}] (x) \leq \RR_{\Phi, *} \hm^{p_R}(x)  + \RR_{\Phi, *} [\chi_{(3R)^c}\,\hm^{p_R}](x).
\end{equation}
Let us estimate the first term on the right hand side.
Suppose first that $x\in Q\in \reg(R)$, with $Q\cap 3R\neq \varnothing$. By Lemma \ref{lemregul} (d), there
exists some cube $\wt{Q}\in\tree(R)$ such that 
 $$\ell(\wt Q)\approx\ell(Q)\quad \mbox{ and }\quad \dist(Q,\wt Q)\lesssim \ell(Q).$$
By Lemma \ref{lemriesz1}, it holds that for any $\wt x \in \wt Q$, 
\begin{equation}\label{eqriesz857}
\sup_{t>\ell(\wt Q)} |\RR_t \omega^{p_R}(\wt x)|\lesssim \frac1{\mu(R)}.
\end{equation}
By standard estimates, it follows that
$$\sup_{t>\ell(Q)} |\RR_t \omega^{p_R}(x)|\lesssim
\sup_{t>\ell(\wt Q)} |\RR_t \omega^{p_R}(\wt x)| + \sup_{t>\ell(\wt Q)} \frac{\omega^{p_R}(B(\wt x,t))}{t^n}.
$$
From the properties of the cubes from $\tree(R)$, it follows easily that
$$\sup_{t>\ell(\wt Q)} \frac{\omega^{p_R}(B(\wt x,t))}{t^n} \lesssim \frac1{\mu(R)}.$$
Together with \rf{eqriesz857}, this gives
\begin{equation}\label{eq:RtlQ-reg}
\sup_{t>\ell(Q)} |\RR_t \omega^{p_R}(x)|\lesssim \frac1{\mu(R)}.
\end{equation}
This, in turn, implies that if $x \in  \bigcup_{Q \in \reg(R)} Q$,
\begin{equation}\label{eqriesz858}
\RR_{\Phi, *} \hm^{p_R}(x) \lesssim \frac1{\mu(R)}.
\end{equation}
Indeed, notice that if $x \in Q \in \reg(R)$ and $\ve \leq 2 \Phi(x)$, then by standard estimates, we have that
$$ |\RR_{\Phi, \ve} \hm^{p_R}(x)  - \RR_{\Phi, 2\Phi(x)} \hm^{p_R}(x)|  \lesssim \sup_{r>2\Phi(x)}\frac{\hm^{p_R}(B(x, r))}{r^n} \leq \sup_{r>\ell(Q)}\frac{\hm^{p_R}(B(x, r))}{r^n} \lesssim \frac1{\mu(R)},$$
where in the penultimate inequality we used that $\Phi(x) \geq \ell(Q)/2$ and in the last one we used Lemma \ref{lemregul} (c). Moreover, by Lemma \ref{lem:5.4-llibre} and similar considerations,
$$ |\RR_{2\Phi(x)} \hm^{p_R}(x)  - \RR_{\Phi, 2\Phi(x)} \hm^{p_R}(x)|  \lesssim \sup_{r>2\Phi(x)}\frac{\hm^{p_R}(B(x, r))}{r^n} \lesssim \frac1{\mu(R)}.$$
The latter two estimates combined with \eqref{eq:RtlQ-reg} imply that for $\ve \leq 2 \Phi(x)$,
$$ |\RR_{\Phi, \ve} \hm^{p_R}(x)|  \lesssim \frac1{\mu(R)} + |\RR_{2\Phi(x)} \hm^{p_R}(x)|\lesssim  \frac1{\mu(R)}.$$
On the other hand, in view of \eqref{eq:RtlQ-reg}, it is clear that
$$\sup_{\ve>2 \Phi(x)} |\RR_{\Phi, \ve} \hm^{p_R}(x)|  \leq \sup_{\ve>\ell(Q)} |\RR_{\Phi, \ve} \hm^{p_R}(x)|\lesssim \frac1{\mu(R)},$$
which concludes \eqref{eqriesz858}.

In the case  $x \in 3R\setminus\bigcup_{Q\in\reg(R)} Q$, we have $\Phi(x)=0$ and a direct application of Lemma \ref{lemriesz1} shows that \rf{eqriesz858} also holds.

Next we estimate the last term in \rf{eq:Riesz-split}. To this end, note that $\Phi(y)\gtrsim \ell(R)$
for all $y\in (3R)^c$. Hence, for $x\in 3R$ and $y\in (3R)^c$, we have
$$|K_\Phi(x,y)|\lesssim \frac1{\Phi(y)^n} \lesssim \frac1{\ell(R)^n}.$$
So we get
$$\RR_{\Phi, *} [\chi_{(3R)^c}\,\hm^{p_R}](x)\lesssim \frac{\omega^{p_R}((3R)^c)}{\ell(R)^n}\leq
\frac{1}{\ell(R)^n}\lesssim \frac1{\mu(R)}.$$
In combination with \rf{eqriesz858}, this gives
$$\RR_{\Phi, *}[\chi_{3R}\,\hm^{p_R}] (x)\lesssim \frac1{\mu(R)}
\quad \mbox{ for all $x \in 3R$.}$$
This finishes the proof of \rf{eq:sigma*9} and of the $L^2(\sigma)$ boundedness of $\RR_{\sigma,\Phi}$. Together with Lemma \ref{lem:5.27-llibre}, this implies that $\RR_{\sigma,\Phi}$ is bounded from $L^1(\sigma)$ to $L^{1,\infty}(\sigma)$, and thus in $L^p(\sigma)$ for $1<p<\infty$. Our lemma is now concluded.
\end{proof}
\vv


\begin{lemma}\label{lemsigma2}
The operator $\RR_{\sigma,\Phi}$ is bounded from $L^p(\sigma)$ to $L^p(\wt\mu)$, $1<p<\infty$,
and from $L^1(\sigma)$ to $L^{1,\infty}(\wt\mu)$, with the norms depending on  $p$.
\end{lemma}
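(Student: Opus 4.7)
The plan is to reduce the assertion to the bounds already assembled in Lemmas \ref{lemsigma1}, \ref{lem:maximal}, \ref{lem:5.27-llibre}, and \ref{lem:var5.26}. By Lemma \ref{lemsigma1} together with Lemma \ref{lem:5.27-llibre}, the operator $\RR_{\sigma,\Phi}$ is bounded from $L^1(\sigma)$ to $L^{1,\infty}(\sigma)$, so the hypothesis of Lemma \ref{lem:var5.26} is satisfied, and the constant $\|\RR_\Phi\|_{L^1(\sigma)\to L^{1,\infty}(\sigma)}$ appearing there is harmless. Since bounds for $\RR_{\sigma,\Phi}f = \RR_\Phi(f\sigma)$ follow from bounds for the maximal operator $\RR_{\Phi,*}(f\sigma)$, I will work with the latter throughout.

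For the strong $L^p(\sigma)\to L^p(\wt\mu)$ bound with $p>1$, I would apply Lemma \ref{lem:var5.26} with $s=1$, which gives the pointwise majorization
\[
\RR_{\Phi,*}(f\sigma)(x)\,\lesssim\, M_{\sigma,b,\Phi}\!\bigl(\RR_\Phi(f\sigma)\bigr)(x)\;+\;M^r_{\sigma,\Phi}f(x)\;+\;M_{\sigma,b,\Phi}f(x).
\]
Taking $L^p(\wt\mu)$ norms on both sides, the second and third summands are controlled by $\|f\|_{L^p(\sigma)}$ directly by Lemma \ref{lem:maximal}. For the first, I combine the $L^p(\sigma)$-boundedness of $\RR_{\sigma,\Phi}$ from Lemma \ref{lemsigma1} with the $L^p(\sigma)\to L^p(\wt\mu)$-boundedness of $M_{\sigma,b,\Phi}$ from Lemma \ref{lem:maximal} to conclude.

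For the weak-type $L^1(\sigma)\to L^{1,\infty}(\wt\mu)$ bound I will instead pick some $s\in(0,1)$ in Lemma \ref{lem:var5.26}, so that
\[
\RR_{\Phi,*}(f\sigma)(x)\,\lesssim\,\bigl[M_{\sigma,b,\Phi}\bigl(|\RR_\Phi(f\sigma)|^s\bigr)(x)\bigr]^{1/s}+M^r_{\sigma,\Phi}f(x)+M_{\sigma,b,\Phi}f(x).
\]
The last two terms map $L^1(\sigma)$ to $L^{1,\infty}(\wt\mu)$ by Lemma \ref{lem:maximal}. For the main term, I use that $\RR_{\sigma,\Phi}$ is bounded from $L^1(\sigma)$ to $L^{1,\infty}(\sigma)$, hence
\[
\bigl\|\,|\RR_\Phi(f\sigma)|^s\,\bigr\|_{L^{1/s,\infty}(\sigma)}=\|\RR_\Phi(f\sigma)\|_{L^{1,\infty}(\sigma)}^s\lesssim\|f\|_{L^1(\sigma)}^s.
\]
The remaining step, and the main technical point of the argument, is to show that $M_{\sigma,b,\Phi}$ maps $L^{1/s,\infty}(\sigma)$ into $L^{1/s,\infty}(\wt\mu)$; this I obtain by Marcinkiewicz interpolation between the weak $(1,1)$ and the strong $(p,p)$ bounds of Lemma \ref{lem:maximal} (with any $p>1/s$). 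Applying this to $|\RR_\Phi(f\sigma)|^s$ and then raising to the $1/s$-th power yields the desired $L^{1,\infty}(\wt\mu)$ bound.

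The main (mild) obstacle is the weak-type endpoint, where one must be slightly careful about the choice of $s<1$ and the interpolation argument that transfers the change-of-measure bounds for $M_{\sigma,b,\Phi}$ from strong $L^p$ to weak $L^{1/s}$. Everything else is a direct synthesis of the lemmas of this subsection.
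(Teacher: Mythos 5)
Your proof is correct and follows essentially the same route as the paper: Cotlar's inequality (Lemma \ref{lem:var5.26}) with $s=1$ for the strong $L^p(\sigma)\to L^p(\wt\mu)$ bounds and with $s\in(0,1)$ for the weak endpoint, combined with Lemmas \ref{lem:maximal}, \ref{lemsigma1} and \ref{lem:5.27-llibre}. The only difference is cosmetic: where you invoke the Lorentz-space (weak $L^{1/s}$) form of Marcinkiewicz interpolation for $M_{\sigma,b,\Phi}$, the paper performs the same computation by hand, splitting $|\RR_{\sigma,\Phi}f|^s$ at height $\lambda^s/2$ and estimating the large part via Kolmogorov's inequality together with the weak $(1,1)$ bound of $\RR_{\sigma,\Phi}$ on $L^1(\sigma)$.
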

\begin{proof}

Note that Lemma \ref{lem:var5.26} holds for $\nu=\sigma$. Then that $\RR_{\sigma,\Phi}$ is bounded from $L^p(\sigma)$ to $L^p(\wt\mu)$  follows by a direct application of Lemma \ref{lem:maximal} and Lemma \ref{lemsigma1}. The same lemmas also imply that $\RR_{\sigma,\Phi}$ is bounded from $L^1(\sigma)$ to $L^{1,\infty}(\wt\mu)$ in a non-trivial way. Although the arguments are standard, we will give the proof for the sake of clarity. 

In view of Cotlar's inequality in Lemma \ref{lem:var5.26} and Lemma \ref{lem:maximal},  it suffices to prove that
for $f \in L^1(\sigma)$ and  $\lambda>0$ it holds that
$$ \wt \mu ( \{ (M_{\sigma, b, \Phi}(\RR_{\Phi, \sigma}f^s))^{1/s} > \lambda\} ) \lesssim \lambda^{-1}  \|f\|_{L^{1}(\sigma)}.$$
 Define now
$$g:=|\RR_{\sigma,\Phi} f|^s, \quad g_1:= g \,\chi_{\{|g|< \lambda^s/2\}}\quad \textup{and}\quad g_2:=g-g_1.$$
Set also $E_\lambda:=\{|\RR_{\sigma,\Phi} f|>\frac{\lambda}{2^{1/s}}\}$. Since
$$ \wt \mu ( \{ (M_{\sigma, b, \Phi}g)^{1/s} > \lambda\} ) \leq  \wt \mu ( \{ M_{\sigma, b, \Phi} g_1 > \lambda^s/2\} )+ \wt \mu ( \{ M_{\sigma, b, \Phi} g_2 > \lambda^s/2\} )$$
and $M_{\sigma, b, \Phi} g_1 \leq \lambda^s/2$, it is enough to prove that 
$$\wt \mu ( \{ M_{\sigma, b, \Phi} g_2 > \lambda^s/2\} ) \lesssim \|f\|_{L^1(\sigma)}.$$
To this end, in light of Lemma \ref{lem:maximal}, Kolmogorov's inequality (see e.g. Lemma 2.19 in \cite{Tolsa-llibre}) and  Lemma \ref{lemsigma1} (i.e. $\RR_{\sigma,\Phi}$ is bounded from $L^1(\sigma)$ to $L^{1,\infty}(\sigma)$), we get that
\begin{align*}
\wt \mu ( \{ M_{\sigma, b, \Phi} g_2 > \lambda^s/2\} ) &\lesssim \lambda^{-s} \int|g_2| \,d\sigma=\lambda^{-s} \int_{E_\lambda}|\RR_{\sigma,\Phi} f|^s \,d\sigma\\
&\lesssim \lambda^{-s}\, \sigma(E_\lambda)^{1-s}\, \|\RR_{\sigma,\Phi} f\|_{L^{1,\infty}(\sigma)}^s\\
&\lesssim \lambda^{-s}\, \lambda^{s-1}\|f\|_{L^{1}(\sigma)}^{1-s}\,\|f\|_{L^{1}(\sigma)}^s\\
&= \lambda^{-1}\|f\|_{L^{1}(\sigma)}.
\end{align*}
This finishes the proof of the lemma.
\end{proof}
\vv
\begin{rem}\label{rem:duality}
Since $K_{\Phi}$ is antisymmetric, by duality, $\RR_{\wt\mu,\Phi}: L^p(\wt \mu) \to L^p(\sigma)$ is bounded, for $1<p<\infty$.
\end{rem}
\vv

Next we intend to show that $\RR_{\wt\mu,\Phi}$ is bounded in $L^p(\wt\mu)$.

\vv
\begin{lemma}\label{lemsigma3}
The operator $\RR_{\wt\mu,\Phi}$ is bounded in $L^p(\wt\mu)$, for $1<p<\infty$, with its norm depending on $p$.
\end{lemma}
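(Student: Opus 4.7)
The plan is to apply Theorem~\ref{lem:12.1-Vol} with $\nu=\wt\mu$, so that the $L^2(\wt\mu)$-boundedness of $\RR_{\wt\mu,\Phi}$ will follow once I verify the pointwise estimate
$$\RR_{\Phi,*}\wt\mu(x)\lesssim 1 \qquad \text{for $\wt\mu$-a.e.\ } x.$$
The hypothesis $\Phi\ge\rho_{\wt\mu}$ is automatic because $\wt\mu\le\mu$ inherits $n$-polynomial growth from the AD-regularity of $\mu$, so $\rho_{\wt\mu}\equiv 0$. The full $L^p(\wt\mu)$ range $1<p<\infty$ then follows from the $L^2$ case via Lemma~\ref{lem:5.27-llibre}, Marcinkiewicz interpolation, and duality.

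The main step is thus the pointwise bound. My idea is to compare $\RR_{\Phi,*}\wt\mu$ directly to $\RR_{\Phi,*}\sigma$, which has already been shown to be $\lesssim 1$ on $3R$ in \eqref{eq:sigma*9} during the proof of Lemma~\ref{lemsigma1}. Lemma~\ref{lemfac999} (with $h\approx 1$) yields the identity
$$\wt\mu\;=\;\mu|_{3R}-h^{-1}\chi_{3R\setminus\bigcup_{Q\in\reg(R)} Q}\,\sigma,$$
and consequently
$$\RR_{\Phi,\ve}\wt\mu(x)\;=\;\RR_{\Phi,\ve}\mu|_{3R}(x)-\RR_{\sigma,\Phi,\ve}\bigl(h^{-1}\chi_{3R\setminus\bigcup Q}\bigr)(x).$$
The second term is pointwise bounded by applying Cotlar's inequality of Lemma~\ref{lem:5.26-llibre} to the bounded function $h^{-1}\chi_{\cdots}\in L^\infty(\sigma)$ together with the $L^2(\sigma)$-boundedness of $\RR_{\sigma,\Phi}$ from Lemma~\ref{lemsigma1}; standard $L^\infty$ to weak BMO estimates then give the required pointwise bound at $\wt\mu$-a.e.\ $x$.

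For the first term, I would fix $x\in Q_0\cap 3R$ with $Q_0\in\reg(R)$ (hence $\Phi(x)\approx\ell(Q_0)$) and $\ve\ge\Phi(x)$, and estimate $\RR_{\Phi,\ve}\mu|_{3R}(x)-\RR_{\Phi,\ve}\sigma(x)$ annulus by annulus with $A_k:=\{2^k\ve\le|x-y|<2^{k+1}\ve\}$. On each $A_k$ both measures have common $n$-growth, so I can replace $K_\Phi(x,\cdot)$ by a representative value on $A_k$, picking up an oscillation term controlled by the kernel smoothness \eqref{eq:supkerholder} of size $2^{-k}/(2^k\ve)^n$ against the total mass in $A_k$. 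The geometric cancellation needed to avoid the logarithmic divergence that would appear from a pure absolute-value bound comes precisely from Lemma~\ref{lem:reg-growth}, which provides the quantitative matching $\mu(B(x,r))\lesssim\sigma(B(x,br))\cdot 1$ for $r\ge\Phi(x)$ at points in $\supp\wt\mu$; the geometric sum over $k\ge 0$ then yields a bounded total.

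The hard part is carrying out this annular comparison rigorously: a naive absolute-value estimate on $\RR_{\Phi,\ve}\wt\mu(x)$ diverges, because $\wt\mu$ only enjoys $n$-growth with no decay. The resolution is to exploit the antisymmetry of $K_\Phi$ together with its Calderón--Zygmund smoothness, so that only the \emph{difference} $\mu|_{3R}-\sigma$ integrated against the kernel matters at each scale, and Lemma~\ref{lem:reg-growth} forces this discrepancy to sit at scales $\lesssim\Phi(x)$ inside the regularized cubes, where the suppressed kernel is essentially constant of size $\Phi(x)^{-n}$. Once these matching considerations are made quantitative, combining them with the bound $\RR_{\Phi,*}\sigma\lesssim 1$ closes the argument and finishes the proof.
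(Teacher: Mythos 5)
Your plan to invoke Theorem~\ref{lem:12.1-Vol} with $\nu=\wt\mu$ requires the \emph{pointwise} estimate $\RR_{\Phi,*}\wt\mu(x)\lesssim 1$ for $\wt\mu$-a.e.\ $x$. This is a strictly stronger statement than the lemma itself (which only asserts $L^p$-boundedness of the operator), and there is no reason to expect it: boundedness of $\RR_{\wt\mu,\Phi}$ in $L^2(\wt\mu)$ would only put $\RR_{\wt\mu,\Phi}1$ in $\mathrm{BMO}(\wt\mu)$, which is a genuinely larger class than $L^\infty$. Within your argument the same confusion appears explicitly: for the term $\RR_{\sigma,\Phi}\bigl(h^{-1}\chi_{3R\setminus\bigcup Q}\bigr)$ you appeal to Cotlar's inequality and ``standard $L^\infty$ to weak BMO estimates'' to conclude a pointwise bound, but that step is simply false. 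Cotlar's inequality majorizes the maximal operator by maximal functions of $\RR_{\sigma,\Phi}f$; applied to a bounded $f$ it delivers BMO control, and BMO functions may be pointwise unbounded. The estimate \eqref{eq:sigma*9} from Lemma~\ref{lemsigma1} gives $\RR_{\Phi,*}\sigma\lesssim1$ for the \emph{constant} function $1$, and does not transfer to $\RR_{\sigma,\Phi}g$ for a general bounded $g$.

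The annular comparison of $\RR_{\Phi,\ve}\mu|_{3R}$ with $\RR_{\Phi,\ve}\sigma$ also does not produce the claimed decay. On an annulus $A_k=\{2^k\ve\le|x-y|<2^{k+1}\ve\}$ the width is $\approx 2^k\ve$, comparable to the distance $|x-y|$, so the kernel oscillation given by \eqref{eq:supkerholder} is of size $\approx(2^k\ve)^{-n}$ --- i.e.\ the full size of the kernel, not $2^{-k}(2^k\ve)^{-n}$. There is no geometric gain in $k$ from smoothness alone, and each annulus still contributes $O(1)$. Moreover Lemma~\ref{lem:reg-growth} provides only the one-sided mass bound $\mu(B(x,r))\lesssim\sigma(B(x,br))$; it does not say that the signed measure $\mu|_{3R}-\sigma$ is concentrated at scales $\lesssim\Phi(x)$. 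At intermediate scales $\Phi(x)\lesssim r\lesssim\ell(R)$ the two measures may be mutually singular (e.g.\ supported on disjoint pieces of $\partial\Omega$), so the ``discrepancy'' you invoke is genuinely of size $\approx r^n$ at every $r$, and the logarithmic divergence is not avoided.

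The paper takes a different route that sidesteps the need for any pointwise bound on $\RR_{\Phi,*}\wt\mu$. It proves the weak $(1,1)$ estimate for $\RR_{\wt\mu,\Phi}$ by a David-style transference argument: write
$$f\,\wt\mu=\sum_{Q\in\reg(R)}\bigl(f\,\wt\mu|_Q-\vphi_Q\,\sigma\bigr)+\sum_{Q\in\reg(R)}\vphi_Q\,\sigma,$$
where $\vphi_Q=\chi_{aQ}\,\sigma(aQ)^{-1}\int_Qf\,d\wt\mu$, so each $\nu_Q:=f\,\wt\mu|_Q-\vphi_Q\,\sigma$ has zero total mass. The $\sigma$-absolutely-continuous sum is handled by the already-established $L^1(\sigma)\to L^{1,\infty}(\wt\mu)$ bound from Lemma~\ref{lemsigma2}; for the mean-zero pieces one uses kernel smoothness together with $\int d\nu_Q=0$ at far scales, and the $L^2(\sigma)\to L^2(\wt\mu)$ bound locally. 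This bypasses entirely the pointwise estimate on $\RR_{\Phi,*}\wt\mu$ that your argument hinges on, and it is what makes the proof close.
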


\begin{proof}
It is enough to prove that $\RR_{\wt\mu,\Phi}$ is bounded from $L^1(\wt\mu)$ to $L^{1,\infty}(\wt\mu)$ because as shown in \cite[Proposition 7.8]{Tolsa-pubmat}, for example, this implies the $L^p(\mu)$-boundedness for $1<p<\infty$.
To this end,  let $f\in L^1(\wt\mu)$ and
for each $Q\in\reg(R)$ such that $Q\cap 3R\neq\varnothing$, consider the function $\vphi_Q$ defined by
$$\vphi_Q = \chi_{aQ}\, \frac1{\sigma(aQ)}\int_Q f\,d\wt\mu,$$
where $a>1$ will be fixed in a moment. In this way, we have
$$ \|\vphi_Q\|_{L^1(\sigma)} = \int_{a Q} \vphi_Q\,d\sigma = \int_Q f\,d\wt\mu.$$
Further, if $a$ is chosen big enough (i.e., $a\gtrsim b$), by Lemma \ref{lem:reg-growth} we have
$\sigma(aQ)\gtrsim \mu(Q)$, and so
\begin{equation}\label{eqsum949}
\|\vphi_Q\|_{L^2(\sigma)}\leq \frac1{\sigma(aQ)^{1/2}}\int_Q f\,d\wt\mu\lesssim \frac1{\mu(Q)^{1/2}}\int_Q f\,d\wt\mu.
\end{equation}

Now we we write
$$f\,\wt\mu = \sum_{Q\in\reg(R)} \bigl(f\,\wt\mu|_Q - \vphi_Q\,\sigma\bigr) +  \sum_{Q\in\reg(R)} \vphi_Q\,\sigma =: \nu + \eta,$$
which implies that
\begin{equation}\label{eqsum94}
\wt\mu\bigl(\bigl\{x:|\RR_{\wt\mu,\Phi}f(x)|>\lambda\bigr\}\bigr)\leq
\wt\mu\bigl(\bigl\{x:|\RR_{\Phi}\nu(x)|>\lambda/2\bigr\}\bigr) +
\wt\mu\bigl(\bigl\{x:|\RR_{\Phi}\eta(x)|>\lambda/2\bigr\}\bigr).
\end{equation}
To deal with the last term above, we use the boundedness of $\RR_{\sigma,\Phi}$ from $L^1(\sigma)$ to $L^{1,\infty}(\wt\mu)$, proved in 
Lemma \ref{lemsigma2}:
$$\wt\mu\bigl(\bigl\{x:|\RR_{\Phi}\eta(x)|>\lambda/2\bigr\}\bigr)\leq C\,\frac{\bigl\|\sum_{Q\in\reg(R)}\vphi_Q\bigr\|_{L^1(\sigma)}}\lambda.$$
Observe now that
$$\sum_{Q\in\reg(R)}\bigl\|\vphi_Q\bigr\|_{L^1(\sigma)}\leq \sum_{Q\in\reg(R)}
\int_Q |f|\,d\wt\mu =\|f\|_{L^1(\wt\mu)}.$$
Hence,
$$\wt\mu\bigl(\bigl\{x:|\RR_{\Phi}\eta(x)|>\lambda/2\bigr\}\bigr)\leq C\,\frac{\|f\|_{L^1(\wt\mu)}}\lambda.$$

To estimate the first term on the right hand side of \rf{eqsum94}, we set
$$\wt\mu\bigl(\bigl\{x:|\RR_{\Phi}\nu(x)|>\lambda/2\bigr\}\bigr)\leq \frac1\lambda\int |\RR_{\Phi}\nu|\,d\wt\mu 
\leq \frac1\lambda
\sum_{Q\in\reg(R)}\int |\RR_{\Phi}\nu_Q|\,d\wt\mu,$$
where we wrote
$$\nu_Q= f\,\wt\mu|_Q - \vphi_Q\,\sigma.$$
Now we split
\begin{align}\label{eqsum95}
\int |\RR_{\Phi}\nu_Q|\,d\wt\mu & = \int_{2aQ} |\RR_{\Phi}\nu_Q|\,d\wt\mu +
\int_{(2aQ)^c} |\RR_{\Phi}\nu_Q|\,d\wt\mu\\
& \leq \int_{2aQ} |\RR_{\Phi} (f\,\wt\mu|_Q)|\,d\wt\mu +
\int_{2aQ} |\RR_{\Phi}(\vphi_Q\,\sigma)|\,d\wt\mu + \int_{(2aQ)^c} |\RR_{\Phi}\nu_Q|\,d\wt\mu.\nonumber 
\end{align}
For the first summand on the right hand side, using that $\Phi(x)\approx \ell(Q)$ for all $x\in Q$, we get
\begin{align*}
\int_{2aQ} |\RR_{\Phi} (f\,\wt\mu|_Q)|\,d\wt\mu & \leq \mu(2aQ)\,\|\RR_{\Phi}(f\,\wt\mu|_Q)\|_{L^\infty(\wt\mu)}\\
& \lesssim\mu(2aQ)\,\frac{\|\chi_Qf\|_{L^1(\wt\mu)}
}{\ell(Q)^n}\lesssim
\|\chi_Qf\|_{L^1(\wt\mu)}.
\end{align*}
For the second summand on the right hand side of \rf{eqsum95} we use that $\RR_{\sigma,\Phi}$ is bounded
from $L^2(\sigma)$ to $L^2(\wt\mu)$, by Lemma \ref{lemsigma2}:
$$\int_{2aQ} |\RR_{\Phi}(\vphi_Q\,\sigma)|\,d\wt\mu \leq 
\wt\mu(2aQ)^{1/2}\,\|\RR_{\Phi}(\vphi_Q\,\sigma)\|_{L^2(\wt\mu)} \lesssim  
\mu(Q)^{1/2}\,\|\vphi_Q\|_{L^2(\sigma)}.$$
Using the estimate \rf{eqsum949} for $\|\vphi_Q\|_{L^2(\sigma)}$, we derive
$$\int_{2aQ} |\RR_{\Phi}(\vphi_Q\,\sigma)|\,d\wt\mu \lesssim \|\chi_Qf\|_{L^1(\wt\mu)}.
$$

To bound the last integral in \rf{eqsum95}, we take into account that $\int d\nu_Q=0$, and so for all
$x\not \in2aQ$, 
$$|\RR_{\Phi}\nu_Q(x)| \leq \int| K_\Phi(x,y)- K_\Phi(x,z_Q)|\,d|\nu_Q|(y),$$
where $z_Q$ denotes the center of $Q$. Since $| K_\Phi(x,y)- K_\Phi(x,z_Q)|\lesssim \ell(Q)/|x-z_Q|^{n+1}$,
we derive
$$|\RR_{\Phi}\nu_Q(x)| \lesssim \frac{\|\nu_Q\|}{|x-z_Q|^{n+1}},$$
and thus
$$\int_{(2aQ)^c} |\RR_{\Phi}\nu_Q|\,d\wt\mu\lesssim \|\nu_Q\|\int_{(2aQ)^c}\frac{1}{|x-z_Q|^{n+1}}\,d\wt\mu
\lesssim \|\nu_Q\|\lesssim \|\chi_Qf\|_{L^1(\wt\mu)},
$$
by standard estimates, using the polynomial growth of $\wt\mu$.

Gathering the estimates for the three terms on right hand side of \rf{eqsum95} we obtain 
$$\int |\RR_{\Phi}\nu_Q(x)|\,d\wt\mu\lesssim  \|\chi_Qf\|_{L^1(\wt\mu)},
$$
which gives
$$\wt\mu\bigl(\bigl\{x:|\RR_{\Phi}\nu(x)|>\lambda/2\bigr\}\bigr)\lesssim \frac{\sum_{Q\in\reg}\|\chi_Qf\|_{L^1(\wt\mu)}}\lambda= \frac{\|f\|_{L^1(\wt\mu)}}\lambda,$$
and completes the proof of the boundedness of $\RR_{\wt\mu,\Phi}$ from $L^1(\wt\mu)$ to $L^{1,\infty}(\wt\mu)$.
\end{proof}
\vv

\begin{lemma}\label{lemsigma4}
The operator $\RR_{\mu|_{3R},\Phi}$ is bounded in $L^p(\mu|_{3R})$, for $1<p<\infty$, with its norm depending on $p$.
\end{lemma}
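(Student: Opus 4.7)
The plan is to deduce this lemma by a straightforward decomposition that reduces everything to the four boundedness statements already established: Lemma \ref{lemsigma1} ($\RR_{\sigma,\Phi}:L^p(\sigma)\to L^p(\sigma)$), Lemma \ref{lemsigma2} ($\RR_{\sigma,\Phi}:L^p(\sigma)\to L^p(\wt\mu)$), Remark \ref{rem:duality} ($\RR_{\wt\mu,\Phi}:L^p(\wt\mu)\to L^p(\sigma)$), and Lemma \ref{lemsigma3} ($\RR_{\wt\mu,\Phi}:L^p(\wt\mu)\to L^p(\wt\mu)$). The bridge between $\mu|_{3R}$ and the pair $(\sigma,\wt\mu)$ is Lemma \ref{lemfac999}: on $A:=3R\setminus\bigcup_{Q\in\reg(R)} Q$, the measures $\mu$ and $\sigma$ are mutually comparable, while by construction $\wt\mu=\mu$ on $3R\setminus A$. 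Consequently $\mu|_{3R}=\mu|_A+\wt\mu$ and $L^p(\mu|_A)\approx L^p(\sigma|_A)$.

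Given $f\in L^p(\mu|_{3R})$, I would split $f=f_1+f_2$ with $f_1:=f\chi_A$ and $f_2:=f\chi_{3R\setminus A}$. Writing $\mu|_A=h^{-1}\sigma|_A$ with $h\approx 1$ (Lemma \ref{lemfac999}), and extending $h^{-1}f_1\chi_A$ by zero outside $A$, one obtains
$$\RR_{\mu|_{3R},\Phi}f \;=\; \RR_{\sigma,\Phi}\bigl(h^{-1}f_1\,\chi_A\bigr) \;+\; \RR_{\wt\mu,\Phi}(f_2),$$
with $\|h^{-1}f_1\chi_A\|_{L^p(\sigma)}\approx \|f_1\|_{L^p(\mu|_A)}\leq \|f\|_{L^p(\mu|_{3R})}$ and $\|f_2\|_{L^p(\wt\mu)}=\|f_2\|_{L^p(\mu|_{3R})}\leq \|f\|_{L^p(\mu|_{3R})}$.

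To control the $L^p(\mu|_{3R})$-norm of the right-hand side, I would estimate separately on $A$ and on $3R\setminus A$. Using the $\mu$-$\sigma$ comparability on $A$, for the first summand
$$\|\RR_{\sigma,\Phi}(h^{-1}f_1\chi_A)\|_{L^p(\mu|_A)}\lesssim \|\RR_{\sigma,\Phi}(h^{-1}f_1\chi_A)\|_{L^p(\sigma)}\lesssim \|f\|_{L^p(\mu|_{3R})}$$
by Lemma \ref{lemsigma1}, while Lemma \ref{lemsigma2} gives
$$\|\RR_{\sigma,\Phi}(h^{-1}f_1\chi_A)\|_{L^p(\wt\mu)}\lesssim \|f\|_{L^p(\mu|_{3R})}.$$
Symmetrically, for the second summand Remark \ref{rem:duality} (together with $\mu|_A\approx\sigma|_A$) yields
$$\|\RR_{\wt\mu,\Phi}f_2\|_{L^p(\mu|_A)}\lesssim \|\RR_{\wt\mu,\Phi}f_2\|_{L^p(\sigma)}\lesssim \|f\|_{L^p(\mu|_{3R})},$$
and Lemma \ref{lemsigma3} gives $\|\RR_{\wt\mu,\Phi}f_2\|_{L^p(\wt\mu)}\lesssim \|f\|_{L^p(\mu|_{3R})}$. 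Summing the four contributions proves the lemma.

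I do not expect any genuine obstacle here; the real analytic work was carried out in Lemmas \ref{lemsigma1}--\ref{lemsigma3} (via the suppressed Riesz kernels and the $Tb$ theorem of Nazarov--Treil--Volberg). Lemma \ref{lemsigma4} is essentially an organizational matryoshka: $\mu|_{3R}$ decomposes into a part comparable to $\sigma$ and a part identical to $\wt\mu$, so the four mapping properties tile into $L^p(\mu|_{3R})\to L^p(\mu|_{3R})$ boundedness.
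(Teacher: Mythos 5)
Your proposal is correct and is essentially the paper's own argument: both split $f\,\mu|_{3R}$ into the part carried by $\wt\mu$ and the part carried by $\mu|_{3R\setminus\bigcup_{Q\in\reg(R)}Q}\approx\sigma$ (via Lemma \ref{lemfac999}), and then tile the four mapping properties from Lemmas \ref{lemsigma1}--\ref{lemsigma3} and Remark \ref{rem:duality} to bound each piece in $L^p(\wt\mu)$ and in $L^p(\mu|_{3R\setminus\bigcup Q})$. The only difference is organizational (you write out the $2\times 2$ estimates explicitly, the paper treats the two summands in sequence), so there is nothing to add.
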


\begin{proof}
We first notice that
$$\RR_{\mu|_{3R},\Phi} f = \RR_{\wt \mu,\Phi} f +\RR_{\mu|_{3R\setminus\bigcup_{Q\in\reg} Q},\Phi} f $$
Recall now that, by Lemma \ref{lemfac999}, $d\sigma(x) = h(x)\,d\mu(x)$ on $3R\setminus\bigcup_{Q\in\reg} Q$, with $h(x)\approx1$. Therefore,
\begin{align*}
\int_{3R} |\RR_{\wt \mu,\Phi}f|^p\, d\mu &= \int |\RR_{\wt \mu,\Phi}f|^p\, d\wt \mu + \int_{3R\setminus\bigcup_{Q\in\reg} Q}|\RR_{\wt \mu,\Phi}f|^p\, d\mu\\
& \lesssim \int |f|^p \,d\wt\mu + \int_{3R\setminus\bigcup_{Q\in\reg} Q}|\RR_{\wt \mu,\Phi}f|^p\, d\sigma\\
&  \lesssim \int |f|^p \,d\wt\mu,
\end{align*}
where in the last inequality we used the  boundedness of $\RR_{\wt \mu,\Phi}$ from $L^p(\wt\mu)$ to $L^p(\sigma)$, by Remark \ref{rem:duality}.

Without loss of generality, by Lemma \ref{lemsigma3}, we may assume now that $f$ is supported in $3R\setminus\bigcup_{Q\in\reg} Q$. Thus, in view of Lemmas \ref{lemsigma1} and \ref{lemsigma2}, we have that
\begin{align*}
\int_{3R} |\RR_{\mu|_{3R\setminus\bigcup_{Q\in\reg} Q},\Phi}f|^p\, d\mu &= \int |\RR_{\sigma,\Phi} (f h^{-1})|^p\, d\wt \mu + \int_{3R\setminus\bigcup_{Q\in\reg} Q}|\RR_{\sigma,\Phi}(f h^{-1})|^p\, d\mu\\
& \lesssim \int |f h^{-1}|^p \,d\sigma+ \int |\RR_{\sigma,\Phi}(f h^{-1})|^p\, d\sigma\\
&\lesssim \int |f h^{-1}|^p \,d\sigma \approx \int |f|^p \,d\mu,
\end{align*}
where we repeatedly used that $d\sigma(x) = h(x)\,d\mu(x)$ on $3R\setminus\bigcup_{Q\in\reg} Q$, with $h(x)\approx1$.
\end{proof}
\vv

\begin{lemma}\label{lem:R*bddness}
The operator $\RR_{\Phi,\mu|_{3R},*}$ is bounded in $L^p(\mu|_{3R})$, for $1<p<\infty$, with its norm depending on $p$. 
\end{lemma}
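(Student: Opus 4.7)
The plan is to deduce the $L^p$-boundedness of the maximal operator $\RR_{\Phi,\mu|_{3R},*}$ from the $L^p$-boundedness of the (unsuppressed truncation) operator $\RR_{\mu|_{3R},\Phi}$ established in Lemma \ref{lemsigma4}, via a Cotlar-type inequality. The main tool is Lemma \ref{lem:5.26-llibre}, applied with the measure $\nu = \mu|_{3R}$.

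First I would check the hypotheses of Lemma \ref{lem:5.26-llibre}. Since $\mu$ is $n$-AD-regular we have $\mu(B(x,r)) \leq c_0 r^n$ for all $x$ and $r$; hence if the constant $C_0$ in the definition \rf{eqrho} of $\rho_{\mu|_{3R}}$ is chosen greater than $c_0$, then $\rho_{\mu|_{3R}}(x) = 0$ for every $x$, so the required inequality $\Phi(x) \geq \rho_{\mu|_{3R}}(x)$ holds trivially as $\Phi \geq 0$. Next, the $L^2(\mu|_{3R})$-boundedness of $\RR_{\mu|_{3R},\Phi}$ given by Lemma \ref{lemsigma4}, combined with Lemma \ref{lem:5.27-llibre}, yields that $\RR_{\mu|_{3R},\Phi}$ is of weak type $(1,1)$ with respect to $\mu|_{3R}$, which is precisely the input required by Lemma \ref{lem:5.26-llibre}.

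Applying that lemma with (say) $s=1$ gives the pointwise Cotlar inequality
$$\RR_{\mu|_{3R},\Phi,*}f(x) \lesssim M_{\mu|_{3R}}\bigl(\RR_{\mu|_{3R},\Phi}f\bigr)(x) + M_{\mu|_{3R}}f(x)$$
for all $f \in L^p(\mu|_{3R})$. Taking $L^p(\mu|_{3R})$ norms on both sides and invoking the $L^p$-boundedness of the centered Hardy--Littlewood maximal operator $M_{\mu|_{3R}}$ (which holds because $\mu|_{3R}$ is a doubling measure, as a consequence of $n$-AD-regularity on scales up to $\diam(3R)$), together with the $L^p(\mu|_{3R})$-boundedness of $\RR_{\mu|_{3R},\Phi}$ from Lemma \ref{lemsigma4}, we conclude that
$$\|\RR_{\mu|_{3R},\Phi,*}f\|_{L^p(\mu|_{3R})} \lesssim \|f\|_{L^p(\mu|_{3R})},$$
with the implicit constant depending on $p$ but not on $R$.

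No real obstacle is expected here; the statement is a routine application of the Nazarov--Treil--Volberg Cotlar inequality for suppressed kernels, once the genuine $L^p$-boundedness of the operator (not just of its $\eta$-fixed truncations) has been secured in Lemma \ref{lemsigma4}. The only minor point to double-check is that the version of Cotlar's inequality in Lemma \ref{lem:5.26-llibre} can indeed be applied with the measure $\mu|_{3R}$ (as opposed to $\sigma$ or $\wt\mu$); this is fine because the hypotheses of that lemma depend only on growth of $\nu$ and on the $L^1 \to L^{1,\infty}(\nu)$ bound, both of which we have verified.
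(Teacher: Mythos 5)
Your proof is correct, and it follows essentially the same route as the paper: bootstrap from the $L^p(\mu|_{3R})$-boundedness of $\RR_{\mu|_{3R},\Phi}$ (Lemma \ref{lemsigma4}), pass to the weak $(1,1)$ estimate via Lemma \ref{lem:5.27-llibre}, and invoke a Cotlar-type pointwise inequality. The only difference is which Cotlar inequality is used: the paper applies the variant Lemma \ref{lem:var5.26} (with the suppressed maximal operators $M_{\nu,b,\Phi}$ and $M^r_{\nu,\Phi}$), whereas you apply Lemma \ref{lem:5.26-llibre} with the ordinary centered $M_\nu$; since here one works with a single measure $\nu=\mu|_{3R}$, and since $M_{\nu,b,\Phi}\leq M_\nu$ and, by the polynomial growth of $\mu$, $M^r_{\nu,\Phi}\lesssim_b M_\nu$, both versions lead to the same estimate. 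One small inaccuracy to fix: you justify the $L^p$-boundedness of $M_{\mu|_{3R}}$ by appealing to the doubling of $\mu|_{3R}$, but the restriction of a doubling measure to a set need not be doubling. The correct (and simpler) justification is that the centered Hardy--Littlewood maximal operator $M_\nu$ is bounded on $L^p(\nu)$, $1<p\leq\infty$, for \emph{any} Radon measure $\nu$ on $\R^{n+1}$ by the Besicovitch covering theorem.
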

\begin{proof}
This follows immediately from Lemma \ref{lemsigma4} in conjunction with Lemma \ref{lem:var5.26} for $s=1$ and Lemma \ref{lem:5.27-llibre}.
\end{proof}
\vv

\begin{lemma}\label{lem:KboundR*}
Let $R \in \ttt$ and $x \in R$. Then 
\begin{equation}\label{eq:KboundR*}
|K_R (f\mu)(x)| \lesssim \RR_{\Phi, *}(\chi_{3R}\,f\mu)(x) + M_\mu f(x).
\end{equation}
\end{lemma}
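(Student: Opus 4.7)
\smallskip
\textbf{Proof plan.} The starting observation is that $\chi_Q(x)=0$ unless $x\in Q$, so only cubes in $\tree(R)$ that contain $x$ contribute to $K_R(f\mu)(x)$. Such cubes form a nested chain $R=Q_0\supsetneq Q_1\supsetneq\cdots$ with $\ell(Q_k)=2^{-k}\ell(R)$, which is either infinite (when $x\notin\bigcup_{S\in\sss(R)}S$) or terminates at the smallest $\tree(R)$-cube $Q_m\in\sss(R)$ containing $x$. Set $\ell_*(x):=0$ in the first case and $\ell_*(x):=\ell(Q_m)$ in the second, and write $\eta^*:=\max(\eta,\ell_*(x)/2)$. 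Since the half-open annuli $(\max(\eta,\ell(Q_k)/2),\ell(Q_k)]$ are pairwise disjoint and their union equals $(\eta^*,\ell(R)]$, the sum defining $K_R(f\mu)(x)$ telescopes to
$$K_R(f\mu)(x)\;=\;\int_{\eta^*<|x-y|\le \ell(R)}\frac{x-y}{|x-y|^{n+1}}\,f(y)\,d\mu(y).$$

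From this representation the remaining work consists of three comparisons, each of which introduces an error bounded pointwise by $M_\mu f(x)$. First, because $x\in R$, any $y\in\supp\mu$ with $|x-y|\le\ell(R)$ satisfies $\dist(y,R)\le|x-y|\le\ell(R)\le2\ell(R)$ and hence lies in $3R$, so I may insert $\chi_{3R}(y)$ into the integrand for free. Extending the outer truncation $|x-y|\le\ell(R)$ to all of $\R^{n+1}$ then produces $\RR_{\eta^*}(\chi_{3R}f\mu)(x)$ together with the extra piece over $\{|x-y|>\ell(R),\;y\in 3R\}$; since membership in $3R$ forces $|x-y|\le 3\ell(R)$, that piece is controlled by $\ell(R)^{-n}\int_{B(x,3\ell(R))}|f|\,d\mu\lesssim M_\mu f(x)$ using the AD-regularity of $\mu$.

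Next I would enlarge the truncation parameter from $\eta^*$ to $2\eta^*$. The crucial inequality here is $\Phi(x)\le\ell_*(x)\le 2\eta^*$: indeed, taking $Q=Q_m$ in the infimum defining $\Phi(x)$ (or letting $k\to\infty$ along the chain in the infinite case) gives $\Phi(x)\le\ell_*(x)$. The annular integral over $\{\eta^*<|x-y|\le 2\eta^*\}$ contributes another $\lesssim M_\mu f(x)$. With $2\eta^*\ge\Phi(x)$, Lemma \ref{lem:5.4-llibre} applied to the measure $\chi_{3R}f\mu$ (split into positive and negative parts if one prefers) lets me replace $\RR_{2\eta^*}$ by $\RR_{\Phi,2\eta^*}$ up to an additional error $\lesssim M_\mu f(x)$. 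Bounding $|\RR_{\Phi,2\eta^*}(\chi_{3R}f\mu)(x)|$ by $\RR_{\Phi,*}(\chi_{3R}f\mu)(x)$ closes the argument.

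The chief delicacy is the case $\Phi(x)\in(\ell_*(x)/2,\ell_*(x)]$, where $\eta^*$ itself need not exceed $\Phi(x)$ and Lemma \ref{lem:5.4-llibre} cannot be invoked directly at scale $\eta^*$. The harmless dilation to $2\eta^*$ bypasses this obstruction, and the bound $\Phi(x)\le\ell_*(x)$ guarantees that the inflated threshold is still within a constant factor of the natural truncation inherited from the telescoping.
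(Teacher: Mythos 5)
Your argument is correct and follows essentially the same route as the paper's proof: telescope the annuli of the cubes of $\tree(R)$ containing $x$ into a single truncation $\int_{\max(\eta,\ell_*(x)/2)<|x-y|\le\ell(R)}$, insert $\chi_{3R}$ for free, absorb the outer scale and the passage to the suppressed kernel (Lemma \ref{lem:5.4-llibre}) into $M_\mu f(x)$, and dominate by $\RR_{\Phi,*}(\chi_{3R}f\mu)(x)$; the paper just organizes this as two cases ($x$ in a stopping cube or not) instead of your unified $\ell_*(x)$. Your only deviation is the harmless doubling of the truncation radius to guarantee $2\eta^*\ge\Phi(x)$ literally, a small polish of the paper's application of Lemma \ref{lem:5.4-llibre} at scale $\max(\eta,\ell(Q)/2)\ge\Phi(x)/2$, which works anyway since that lemma's proof tolerates $\ve\gtrsim\Phi(x)$ up to constants.
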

\begin{proof}
Let us recall that $K_R(f\mu)(x)= \sum_{Q \in \tree(R)} \RR_Q(f\mu)(x)$. Notice that if $x \in R$ then either $x \in Q$ for some $Q \in \sss(R)$ or $x \in R \setminus \bigcup_{Q \in \sss(R)}Q$. 

\noindent{\bf  Case (i)}: Let $x \in Q$ for some $Q \in \sss(R)$. Then 
\begin{align*}
|K_R(f\mu)(x)| &=\left|\int_{\max(\ell(Q)/2,\eta)< |x-y| \leq\ell(R)} \chi_{3R}(y) f(y)\,K(x-y) \,d\mu(y)\right|\\
&\leq|\RR_{\max(\ell(Q)/2,\eta)}(\chi_{3R}f\mu)(x)| + |\RR_{\ell(R)}(\chi_{3R}f\mu)(x)|\\
&\lesssim |\RR_{\max(\ell(Q)/2,\eta)}(\chi_{3R}f\mu)(x)| + M_\mu f(x).
\end{align*}
By the definition of $\Phi$, $\Phi(x) \leq \ell(Q)$, and so by Lemma \ref{lem:5.4-llibre}, 
\begin{align*}
|K_R(f\mu)(x)| &\lesssim  |\RR_{\Phi, \max(\ell(Q)/2,\eta)}(\chi_{3R}f\mu)(x)| + M_\mu f(x)\\
& \lesssim |\RR_{\Phi, *}(\chi_{3R}\mu)(x)| + M_\mu f(x).
\end{align*}

\noindent{\bf  Case (ii)}: Let $x \in R \setminus \bigcup_{Q \in \sss(R)}Q$. Then every cube $P \subset R$ such that $x \in P$ is in $\tree(R)$. Thus, it is clear that
\begin{align*}
|K_R(f\mu)(x)| &=\left|\int_{\eta<|x-y| \leq\ell(R)} \chi_{3R}(y) f(y)K(x-y) \,d\mu(y)\right|.  
\end{align*}
Arguing as in the previous case and using that $\Phi(x)=0<\ell(P)$, for every $P \in \DD(R)$ such that $x \in P$ (since it is in $\tree(R)$), we can prove \eqref{eq:KboundR*}. This concludes our lemma.
\end{proof}
\vvv

\begin{proof}[\bf Proof of Lemma \ref{lemkr}]
This is an immediate consequence 
of Lemma \ref{lem:KboundR*},  since both $\RR_{\Phi,\mu|_{3R},*}$ and $M_\mu$ are bounded in $L^2(\mu)$. 
\end{proof}
\vv

\subsection{The boundedness of $\RR_\mu$ in $L^2(\mu)$}

In this subsection we conclude the proof of Proposition \ref{propo12} and hence of the implications (b) $\Rightarrow$ (a) and (c) $\Rightarrow$ (a)
in Theorem \ref{teo1} and of Theorem \ref{teo2} by showing the following:

\begin{lemma}
The operator $\RR_\mu$ is bounded in $L^2(\mu)$.
\end{lemma}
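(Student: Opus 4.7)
Plan: By a standard truncation and density argument, it suffices to prove the uniform bound $\|\RR_{\mu,\eta}f\|_{L^2(\mu)}\lesssim\|f\|_{L^2(\mu)}$ as $\eta>0$ varies. Fixing $f\in L^2(\mu)$ of bounded support and a large $R_0\in\DD_\mu$ containing $\supp f$, I would estimate $\|\RR_\eta(f\mu)\|_{L^2(\mu|_{R_0})}$ uniformly in $\eta$ and $R_0$, then let $R_0$ exhaust $\supp\mu$. Once this is accomplished, the uniform rectifiability of $\mu$ follows immediately from the main theorem of \cite{NToV}, concluding Proposition \ref{propo12} (and hence Theorem \ref{teo1} and Theorem \ref{teo2}).

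The starting point is the tree decomposition \eqref{eqdecomp}: $\RR_\eta(f\mu)=\sum_{R\in\ttt}K_R(f\mu)$. Lemma \ref{lemkr} provides the uniform bound $\|K_R(f\mu)\|_{L^2(\mu)}\lesssim\|f\|_{L^2(\mu)}$, and the localization $K_R(f\mu)=\chi_R\cdot K_R(\chi_{3R}f\mu)$ restricts the effective sum to those $R\in\ttt$ with $R\cap R_0\ne\varnothing$. Ancestors of $R_0$ in $\ttt$ contribute only through kernels integrating at scales $\gtrsim\ell(R_0)$, which are controlled by the Hardy--Littlewood maximal function $M_\mu f$ and are therefore bounded in $L^2(\mu)$ by standard Calder\'on--Zygmund arguments.

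For the remaining local sum $\sum_{R\subset R_0,R\in\ttt}K_R(f\mu)$, I would combine the pointwise bound of Lemma \ref{lem:KboundR*}, $|K_R(f\mu)(x)|\lesssim \RR_{\Phi,*}(\chi_{3R}f\mu)(x)+M_\mu f(x)$ for $x\in R$, with the $L^2(\mu|_{3R})$-boundedness of $\RR_{\Phi,\mu|_{3R},*}$ (Lemma \ref{lem:R*bddness}) and the Carleson packing condition \eqref{eqpack57}. The structural fact that drives the summation is the \emph{scale separation} between $K_R$ and $K_{R'}$: for $R\subsetneq R'$ both in $\ttt$ and $x\in R$, the kernel of $K_R$ integrates over $|x-y|$ at scales $\le\ell(R)$ while $K_{R'}$ restricted to $R$ integrates at scales $\ge\ell(R)$, because by construction the parent of $R$ is a stopping cube of $\tree(R')$. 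This yields a near-orthogonality between the pieces which, combined with the packing $\sum_{R\subset S, R\in\ttt}\mu(R)\lesssim\mu(S)$, permits a $TT^*$-type summation.

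The principal obstacle is precisely this combinatorial summation. The naive Cauchy--Schwarz estimate $\sum_R\|\chi_{3R}f\|_{L^2}\|\chi_R g\|_{L^2}$, or equivalently $\sum_{R\ni x}|K_R(f\mu)(x)|$, fails because $\sum_{R\in\ttt}\chi_R$ need not be pointwise bounded, even though the Carleson packing controls it in an integrated sense. The standard Carleson embedding $\sum_R\mu(R)A_R(|g|)^2\lesssim\|g\|_{L^2}^2$ controls only averages, not the quantity $\sum_R\int_R|g|^2\,d\mu$, so one cannot avoid using the scale-orthogonality of the $K_R$'s. Concretely, either one proves an almost-orthogonality estimate of Cotlar--Stein type for the family $\{K_R\}_{R\in\ttt}$ (whose smallness comes from the disjoint scale-ranges), or one exploits the fact that the entire partial sum $\sum_{R\ni x, R\subset R_0}K_R(f\mu)(x)$ reassembles into a truncated Riesz transform of $f$ that is pointwise dominated by $\RR_{\Phi,*}(\chi_{3R_0}f\mu)(x)+M_\mu f(x)$; either route reduces the problem to operators already shown to be $L^2(\mu)$-bounded in Lemma \ref{lem:R*bddness} and classical maximal estimates, thereby closing the argument.
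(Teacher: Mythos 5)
Your setup (the decomposition \eqref{eqdecomp}, Lemma \ref{lemkr}, the treatment of the boundedly many trees whose roots are not contained in the fixed top cube) matches the paper, and your diagnosis of the obstacle --- that the packing condition \eqref{eqpack57} together with the pointwise bound of Lemma \ref{lem:KboundR*} does not sum directly in $L^2$ because $\sum_{R\in\ttt}\chi_R$ is not pointwise bounded --- is accurate. But exactly at that point the argument stops, and neither of your two suggested exits is viable as stated. The ``reassembly'' alternative is circular: the suppression function $\Phi$ is built from $\tree(R)$ and changes with $R$, so there is no single operator $\RR_{\Phi,*}$ dominating $\sum_{R\subset R_0}K_R(f\mu)(x)$; summing the tree pieces at a point simply reproduces the truncated Riesz transform $\RR_\eta(f\mu)(x)$ that you are trying to bound, and Lemma \ref{lem:R*bddness} only bounds the $R$-adapted suppressed operator on $L^p(\mu|_{3R})$, tree by tree. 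The Cotlar--Stein alternative is likewise not carried out: disjointness of the scale ranges at each point gives no almost-orthogonality for rough Calder\'on--Zygmund truncations on an AD-regular measure (the full operator is precisely the sum of its disjoint-scale pieces), and no estimate of $\|K_RK_{R'}^*\|$ with decay summable over $\ttt$ is provided, nor is one plausible without further geometric input.

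The missing idea, which is how the paper proceeds (following Semmes), is to renounce the direct $L^2$ bound of the sum and instead prove, for $f$ supported on a cube $Q_0\in\DD_\mu$, the local estimate
$$\int_{Q_0}|\RR_{\mu,\eta}f|\,d\mu\lesssim\|f\|_{L^2(\mu)}\,\mu(Q_0)^{1/2},$$
uniformly in $\eta$, which by standard Calder\'on--Zygmund (good-$\lambda$) theory self-improves to $L^2(\mu)$-boundedness. At the $L^1$ level the crude triangle inequality over trees is affordable: by Lemma \ref{lemkr} with $p=3/2$, H\"older's inequality, and $K_R(f\mu)=K_R(\chi_{3R}f\mu)$, one gets
$$\int_{Q_0}|K_R(f\mu)|\,d\mu\lesssim\mu(R)\Bigl(\;\avint_{3R}|f|^{3/2}\,d\mu\Bigr)^{2/3},$$
and then the packing condition \eqref{eqpack57}, Carleson's embedding theorem, and the $L^2(\mu)$-boundedness of the maximal operator $x\mapsto\sup_{Q\ni x}\bigl(\avint_{3Q}|f|^{3/2}\,d\mu\bigr)^{2/3}$ close the argument, with no orthogonality between the operators $K_R$ required. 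Without this reduction (or a genuinely established substitute for it) your outline does not yield the lemma.
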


\begin{proof}
We will argue very similarly to Semmes in \cite{Semmes}. For completeness we will show the details.

By standard Calder\'on-Zygmund theory, it is enough to prove that for any cube $Q_0\in\DD_\mu$ and any function $f$ supported on $Q_0$,
$$\int_{Q_0} |\RR_{\mu,\eta} f|\,d\mu\lesssim \left(\int |f|^2\,d\mu\right)^{1/2}\,\mu(Q_0)^{1/2}.$$
We consider the corona type decomposition and the family $\ttt$ 
given by Proposition \ref{propo12}, and then we write
\begin{equation}\label{eq109}
\int_{Q_0} |\RR_{\mu,\eta} f|\,d\mu \leq \sum_{R\in\ttt:R\subset Q_0} \int_{Q_0}|K_R(f\mu)|\,d\mu + \sum_{R\in\ttt:R\not\subset Q_0} \int_{Q_0}|K_R(f\mu)|\,d\mu,
\end{equation}
uniformly on $\eta>0$.
It is immediate to check that any summand in the last sum vanishes unless there exists 
$S\in\tree(R)$ with
$S\supset Q_0$ and $\ell(S)\approx Q_0$. So this sum has a bounded number of nonzero terms and by Lemma \ref{lemkr} we get
$$\sum_{R\in\ttt:R\not\subset Q_0} \int_{Q_0}|K_R(f\mu)|\,d\mu
\lesssim \|f\|_{L^2(\mu)}\,\mu(Q_0)^{1/2}.$$

To deal with the first sum on the right hand side of \rf{eq109} we use again  Lemma \ref{lemkr}, with $p=3/2$, and we take into account that $K_R(f\mu)=K_R(\chi_{3R}f\mu)$:
$$\sum_{R\in\ttt:R\subset Q_0} \int_{Q_0}|K_R(f\mu)|\,d\mu \lesssim 
\sum_{R\in\ttt:R\subset Q_0} \left(\;\avint_{3R} |f|^{3/2}\,d\mu\right)^{2/3}\,\mu(R).$$
Then, by the packing condition in Lemma \ref{lemcarleson} and Carleson's embedding theorem (see Theorem
5.8 in \cite{Tolsa-llibre}, for example):
$$\sum_{R\in\ttt:R\subset Q_0} \int_{Q_0}|K_R(f\mu)|\,d\mu \lesssim 
\int_{Q_0} \left(\sup_{R\ni x} \;\avint_{3R} |f|^{3/2}\,d\mu\right)^{2/3} \,d\mu(x)$$
Since the maximal operator
$$\wt M_{\mu}f(x):=\sup_{Q\in\DD_\mu:Q\ni x} \left(\;\avint_{3Q} |f|^{3/2}\,d\mu\right)^{2/3} $$
is bounded in $L^{2}(\mu)$,
we obtain
$$\sum_{R\in\ttt:R\subset Q_0} \int_{Q_0}|K_R(f\mu)|\,d\mu \lesssim \int_{Q_0} \wt M_{\mu}f \,d\mu
\leq \left(\int |f|^2 \,d\mu\right)^{1/2}\,\mu(Q_0)^{1/2},$$
as wished.
\end{proof}

\vvv

\section{The proof of Corollary \ref{coro1}}

We need the following auxiliary result.

\begin{lemma}
Let $\Omega\subset\R^{n+1}$, $n\geq 2$, be a domain with $n$-AD-regular boundary.
Let $u$ be a  non-negative, bounded,  harmonic function in $\Omega$, vanishing at $\infty$, and let $B$ be a ball centered at
$\partial\Omega$. Suppose that $u$ vanishes continuously in $\partial\Omega\setminus \frac{11}{10}B$. Then, there is a constant
$\alpha>0$ such that
\begin{equation}\label{eqdjl125}
u(x) \lesssim \frac{r(B)^{n-1+\alpha}}{\bigl(r(B) + \dist(x,B)\bigr)^{n-1+\alpha}}\,\|u\|_{L^\infty(\Omega)}.
\end{equation}
Both $\alpha$ and the constant implicit in the above estimate depend only on $n$ the AD-regularity constant of $\partial \Omega$ .
\end{lemma}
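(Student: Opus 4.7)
My plan is to reduce the estimate for $u$ to a decay estimate for the harmonic measure $v(x):=\omega^x(\tfrac{11}{10}B\cap\partial\Omega)$, and then prove the decay of $v$ via a Green function comparison combined with the Hölder regularity of the Green function (in the pole variable) that is afforded by the capacity density condition implied by AD-regularity.

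First, I would apply the maximum principle on a bounded exhaustion of $\Omega$. Since $u$ is non-negative, bounded, vanishes at infinity, and vanishes continuously on $\partial\Omega\setminus\tfrac{11}{10}B$, passing to the limit yields
\[
u(x)\le \|u\|_{L^\infty(\Omega)}\,v(x).
\]
The case $\dist(x,B)\lesssim r(B)$ is trivial because $v\le 1$, so assume $R:=|x-x_B|\gg r(B)$.

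Fix a corkscrew-type point $p_B\in \tfrac12 B\cap\Omega$ with $\dist(p_B,\partial\Omega)\approx r(B)$. The key comparison to establish is
\begin{equation}
v(x)\lesssim r(B)^{n-1}\,G(x,p_B)\qquad\text{for all }x\in\Omega\setminus\tfrac{6}{5}B. \tag{$*$}
\end{equation}
This is proved by comparing the two non-negative harmonic functions $v$ and $h(z):=r(B)^{n-1}G(z,p_B)$ on $\Omega\setminus\tfrac{6}{5}B$. Both vanish continuously on $\partial\Omega\setminus\tfrac{6}{5}B$; at a corkscrew-type reference point of $\partial(\tfrac{6}{5}B)\cap\Omega$ at distance $\approx r(B)$ from $\partial\Omega$ one has $v\lesssim 1$ and $h\approx 1$. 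One then invokes the boundary Harnack principle in CDC domains to control the ratio $v/h$ near $\partial\Omega\setminus\tfrac{6}{5}B$, and concludes by the maximum principle in $\Omega\setminus\tfrac{6}{5}B$.

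Finally, to extract the additional $\alpha$-decay from the Green function, I would use the symmetry $G(x,p_B)=G(p_B,x)$ together with the standard boundary Hölder regularity (a consequence of the CDC) applied to the non-negative harmonic function $G(\cdot,x)$ at the point $\xi_B\in\partial\Omega$ nearest to $p_B$, at scale $\rho\approx R$. Since $G(\cdot,x)$ vanishes on $\partial\Omega$ and satisfies the crude size bound $G(z,x)\lesssim|z-x|^{1-n}\lesssim R^{1-n}$ throughout $B(\xi_B,R/2)\cap\Omega$, the boundary Hölder estimate gives
\[
G(p_B,x)\;=\;G(x,p_B)\;\lesssim\; \Bigl(\frac{\dist(p_B,\partial\Omega)}{R}\Bigr)^{\alpha} R^{1-n} \;\approx\; \Bigl(\frac{r(B)}{R}\Bigr)^{\alpha} R^{1-n}.
\]
Plugging this into ($*$) yields $v(x)\lesssim (r(B)/R)^{n-1+\alpha}$, which is the required estimate.

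The main technical hurdle is the Green function comparison ($*$): since $h$ is not uniformly bounded below on the spherical piece $\partial(\tfrac{6}{5}B)\cap\Omega$ (it decays to $0$ as one approaches $\partial\Omega$ along that sphere), a direct application of the maximum principle with the sup-norm of $v$ against the inf-norm of $h$ is not available. The correct execution requires the boundary Harnack principle for harmonic functions vanishing on portions of a CDC boundary, used to control $v/h$ in annular neighborhoods of $\partial\Omega\setminus\tfrac{6}{5}B$ and to glue these estimates with the interior Harnack inequality at reference corkscrew points. By contrast, Step 1 (reduction to $v$) and Step 3 (the Hölder decay in the pole variable via symmetry of $G$) are essentially routine applications of standard boundary regularity on CDC domains.
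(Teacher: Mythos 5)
Your Step 1 (maximum principle reduction $u(x)\le\|u\|_{L^\infty(\Omega)}\,\omega^x(\tfrac{11}{10}B\cap\partial\Omega)$) and Step 3 (Hölder decay of $G(\cdot,x)$ at the pole via CDC) are both fine. The genuine gap is your Step 2, the comparison $(*)$: $\omega^x(\tfrac{11}{10}B)\lesssim r(B)^{n-1}G(x,p_B)$. You propose to close it with ``the boundary Harnack principle in CDC domains,'' but the boundary Harnack principle is \emph{not} available under only the CDC (or only $n$-AD-regularity of the boundary). It requires additional connectivity in $\Omega$, typically the Harnack chain condition (uniform or 1-sided NTA domains, as in Aikawa's theorem and its elliptic-PDE variants). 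For a corkscrew but non-uniform domain the Green function $G(\cdot,p_B)$ can decay near $\partial\Omega$ far faster than the harmonic measure $\omega^{\cdot}(\tfrac{11}{10}B)$, because the pole $p_B$ may be inaccessible from deep fjords of $\Omega$; consequently the ratio you want to control on an annular region near $\partial\Omega\setminus\tfrac{6}{5}B$ is not bounded, and the maximum-principle closure of $(*)$ fails. This is exactly why the ``$\gtrsim$'' direction (harmonic measure $\gtrsim$ Green function, Lemma~\ref{l:w>G}) is stated in the paper under weak hypotheses, but not the converse. A secondary point: the lemma does not even assume $\Omega$ is a corkscrew domain, so your reference point $p_B\in\tfrac12 B\cap\Omega$ with $\dist(p_B,\partial\Omega)\approx r(B)$ need not exist.

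The paper avoids the Green function entirely. After normalizing $B=B(0,1)$, it forms the Kelvin transform $\wt u(x)=|x|^{1-n}u(x/|x|^2)$, observes that inversion takes $n$-AD-regular sets to $n$-AD-regular sets (hence $T(\Omega)$ still satisfies the CDC), applies the maximum principle to see $\|\wt u\|_{L^\infty(B\cap T(\Omega))}\lesssim\|u\|_{L^\infty(\Omega)}$, and then invokes the ordinary boundary Hölder estimate for $\wt u$ near the boundary point $0\in\partial(T(\Omega))$. Undoing the Kelvin transform gives $u(x)\lesssim |x|^{-(n-1+\alpha)}\|u\|_\infty$. This uses only the CDC-based boundary Hölder regularity (the same one you invoke in your Step 3), applied a single time and in the correct place, with no need for a corkscrew point or any harmonic measure/Green function comparison. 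I'd suggest reworking your argument along these lines: transfer the decay-at-infinity statement to a boundary-Hölder statement at $0$ via inversion, rather than routing through the Green function upper bound, which is simply not available at this level of generality.
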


Although this result is probably quite well known we will show the details of the proof.

\begin{proof}
Assume that $\Omega$ is not bounded (the arguments when $\Omega$ is bounded are analogous). 
Further, by translating and dilating $\Omega$ if necessary we may assume that $B=B(0,1)$. Denote by $T$ the involution
$$T(x) = \frac x{|x|^2},$$
and consider the Kelvin transform
$$\wt u (x) = \frac1{|x|^{n-1}}\,u\bigl(T(x)\bigr).$$
This function is harmonic, continuous and bounded in $B\cap T(\Omega)$. Also, it vanishes in 
$B\cap\partial( T(\Omega))$ and
it is bounded by $C\,\|u\|_{L^\infty(\Omega)}$ in $\partial ( B\cap T(\Omega))$, and thus, by the maximum principle,
$$\|\wt u\|_{L^\infty(B\cap T(\Omega))} \leq C\,\|u\|_{L^\infty(\Omega)}.$$

It is easy to check that $T$ transforms $n$-AD-regular sets into $n$-AD-regular sets. Thus, 
$T(\Omega)$ satisfies the CDC, and so $\wt u$ is H\"older continuous in $\frac12 B\cap \partial( T(\Omega))$, and for some $\alpha>0$ it satisfies
$$\wt u(x)\lesssim \dist\bigl(x,\partial (T(\Omega))\bigr)^\alpha\,\|\wt u\|_{L^\infty(B\cap T(\Omega))}
\lesssim |x|^{\alpha} \,\|u\|_{L^\infty(\Omega)} \quad \mbox{ for all $x\in \frac12B\cap T(\Omega)$,}
$$
since $0\in\partial (T(\Omega))$. This is equivalent to saying that
$$u(x)\lesssim
\frac{1}{|x|^{n-1+\alpha}}\,\|u\|_{L^\infty(\Omega)}
\quad \mbox{ for all $x\in  \Omega\setminus 2B$,}
$$
which proves the lemma.
\end{proof}
\vv

\begin{proof}[Proof of Corollary \ref{coro1}]
We will show that if $\Omega$ is a corkscrew domain with $n$-AD-regular boundary and
there exists some constant $C>0$ such that
\begin{equation}\label{eq991}
\|Su\|_{L^p(\mu)} 
\leq C\,\|N_*u\|_{L^p(\mu)}\quad 
\mbox{ for any function $u\in C_0(\overline
\Omega)$, harmonic in $\Omega$,}
\end{equation}
then the assumption (c) in Theorem \ref{teo1} holds for functions that, besides being bounded and harmonic, belong to
$C_0(\overline \Omega)$. By Remark \ref{rem***} this is enough to prove the Key Lemma \ref{lemclau} and thus
the uniform rectifiability of $\partial\Omega$, since the assumption (c) is not used
elsewhere in the proof of the implication  (c) $\Rightarrow$ (a) of
Theorem \ref{teo1}.

So we have to show that there exists some $C>0$ such that if $u\in C_0(\overline\Omega)$ is harmonic 
in $\Omega$ and $B$ is a ball centered at $\partial\Omega$, then
\begin{equation}\label{eqdk57}
\int_B |\nabla u(x)|^2\,\dist(x,\partial\Omega)\,dx\leq C\,\|u\|^2_{L^\infty(\Omega)}\,r(B)^n.
\end{equation}
To prove this, let $u\in C_0(\overline\Omega)$ be harmonic in $\Omega$ and consider a continuous nonnegative function
$\vphi_B$ which equals $1$ in $\frac52B$ and vanishes in $(3B)^c$, with $\|\vphi\|_\infty\leq1$. Then, write
$$u(x) = \int \vphi_B\, u\,d\omega^x + \int (1-\vphi_B)\, u\,d\omega^x =: u_1(x) + u_2(x).$$
Note that $u_1$ and $u_2$ are harmonic in $\Omega$, continuous in $\overline\Omega$, and vanishing at $\infty$, and
$\|u_i\|_{L^\infty(\Omega)}\leq \|u\|_{L^\infty(\Omega)}$ for $i=1,2$.

To deal with the non-local function $u_2$ we just take into account that 
$u_2$ vanishes in $\partial \Omega\cap \frac52B$ and apply Caccioppoli's inequality.
For the application of Caccioppoli's inequality, note that $u_2$ is harmonic in $\Omega$, subharmonic in $\frac52B$ (when extended by $0$ to $\frac52B\setminus \overline\Omega$) and that
$u_2\in W^{1,2}(\frac52 B)$, because it vanishes continuously in $\frac52B\cap\partial\Omega$. Then we get
$$\int_B|\nabla u_2|^2\,dx\lesssim \frac1{r(2B)^2}\int_{2B}|u_2|^2\,dx\lesssim \|u\|_{L^\infty(\Omega)}^2 \,r(B)^{n-1},$$
which implies that 
$$\int_B |\nabla u_2(x)|^2\,\dist(x,\partial\Omega)\,dx\lesssim\|u\|^2_{L^\infty(\Omega)}\,r(B)^n.$$

To prove the analogous estimate for $u_1$, first we use Fubini and H\" older's inequality, and then we apply \rf{eq991} to $u_1$:
\begin{align*}
\int_B |\nabla u_1(x)|^2\,\dist(x,\partial\Omega)\,dx & \lesssim \int_{2B} \int_{y\in\Gamma(x)}|\nabla u_1(y)|^2\,\dist(y,\partial\Omega)^{1-n}\,dy\, d\mu(x)\\
& \lesssim \| Su_1\|^{2}_{L^p(\mu)} \, \mu(B)^{1-\frac{2}{p}}\\
&\lesssim \|N_*u_1\|^{2}_{L^p(\mu)} \,  r(B)^{n-\frac{2n}{p}}.
\end{align*}
From the estimate \rf{eqdjl125} we deduce that
$$N_*u_1(x)\lesssim\frac{r(3B)^{n-1+\alpha}}{\bigl(r(3B) + \dist(x,3B)\bigr)^{n-1+\alpha}}\,\|u_1\|_{L^\infty(\Omega)},$$
which, in turn, implies that 
\begin{align}\label{eqfju44}
\|N_*u_1\|_{L^p(\mu)}^p& \lesssim \|u_1\|_{L^\infty(\Omega)}^p\int
\left(\frac{r(3B)^{n-1+\alpha}}{\bigl(r(3B) + \dist(x,3B)\bigr)^{n-1+\alpha}}\right)^p\,d\mu(x)\\
&\lesssim \|u_1\|_{L^\infty(\Omega)}^p\,\mu(B),\nonumber
\end{align}
where we took into account that $n>2$ and so $(n-1+\alpha)p>n$ to estimate the last integral.
Therefore,
$$\int_B |\nabla u_1(x)|^2\,\dist(x,\partial\Omega)\,dx \lesssim
\|u_1\|_{L^\infty(\Omega)}^2\,r(B)^n\leq \|u\|_{L^\infty(\Omega)}^2\,r(B)^n,$$
and the proof of the corollary is complete.
\end{proof}
\vv

Note that in the case $n=1$ we can ensure that the integral in \rf{eqfju44} is bounded by $c\,\mu(B)$ only if we assume $p>1/\alpha$.
So arguing as above we derive:

\begin{coro}\label{coro2}
Let $\Omega\subset\R^2$ be a corkscrew domain with $1$-AD-regular boundary. 
There exists some constant $\alpha>0$ depending only on the AD-regularity constant of $\partial\Omega$
such that the following holds.
Suppose that for some $p > 1/\alpha$ there exists some constant $C_p>0$ such that
\begin{equation*}
\|Su\|_{L^p(\mu)} 
\leq C_p\,\|N_*u\|_{L^p(\mu)}\quad 
\mbox{ for any function $u\in C_0(\overline\Omega)$ harmonic in $\Omega$.}
\end{equation*}
Then $\partial\Omega$ is $1$-uniformly rectifiable.
\end{coro}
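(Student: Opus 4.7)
The plan is to follow almost verbatim the proof of Corollary \ref{coro1}, tracking exactly where the dimensional hypothesis $n\geq2$ was used in order to see which assumption on $p$ is needed in the planar case. Inspection shows that the only dimensional constraint appears in the convergence of the integral \rf{eqfju44}, where we need the exponent $(n-1+\alpha)p$ to exceed $n$ so that the tail integral of $N_*u_1$ is controlled by $\mu(B)$. For $n=1$ this reduces exactly to $\alpha p>1$, i.e.\ $p>1/\alpha$, which is the standing hypothesis of the corollary.

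The approach is as follows. First, by Remark \ref{rem***}, it is enough to verify the Carleson measure estimate (c) of Theorem \ref{teo1} restricted to the class of $u\in C_0(\overline\Omega)$ that are harmonic in $\Omega$, since this restricted form of (c) already suffices for the Key Lemma \ref{lemclau} and hence for the implication (c)$\Rightarrow$(a). So I fix such a $u$ and a ball $B$ centered on $\partial\Omega$, and split $u=u_1+u_2$ using a continuous cutoff $\vphi_B$ with $\vphi_B\equiv 1$ on $\tfrac52 B$ and $\vphi_B\equiv 0$ outside $3B$, writing $u_i(x)=\int \psi_i\,u\,d\omega^x$ with $\psi_1=\vphi_B$ and $\psi_2=1-\vphi_B$. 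Both $u_1,u_2$ are harmonic, belong to $C_0(\overline\Omega)$, vanish at infinity, and have $L^\infty$ norm bounded by $\|u\|_{L^\infty(\Omega)}$; moreover $u_2$ vanishes continuously on $\partial\Omega\cap\tfrac52B$. For $u_2$ the Caccioppoli argument of the previous proof is purely local and dimension-insensitive, so it directly yields
$$\int_B |\nabla u_2(x)|^2\,\dist(x,\partial\Omega)\,dx\lesssim \|u\|_{L^\infty(\Omega)}^2\,r(B).$$

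For $u_1$ the plan is to apply Fubini, H\"older's inequality, and the hypothesis \eqref{eqhld2} in the form $\|Su_1\|_{L^p(\mu)}\leq C_p\,\|N_*u_1\|_{L^p(\mu)}$, exactly as in Corollary \ref{coro1}, to obtain
$$\int_B |\nabla u_1|^2\,\dist(x,\partial\Omega)\,dx \lesssim \|N_*u_1\|_{L^p(\mu)}^2\,r(B)^{1-2/p}.$$
To finish I need the Hölder-type decay estimate \rf{eqdjl125} for $N_*u_1$ in the planar setting. Although the lemma was stated for $n\geq2$, its proof via the Kelvin transform $T(x)=x/|x|^2$ and boundary Hölder regularity from the CDC goes through with no change in $\R^2$: the Kelvin image $T(\Omega)$ is again a domain with $1$-AD-regular boundary, the transformed function is harmonic and bounded, and CDC-based boundary Hölder continuity provides the exponent $\alpha>0$ depending only on the AD-regularity constant. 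This yields
$$N_*u_1(x)\lesssim \frac{r(3B)^{\alpha}}{\bigl(r(3B)+\dist(x,3B)\bigr)^{\alpha}}\,\|u\|_{L^\infty(\Omega)},$$
and then, using the linear growth of $\mu$, $\|N_*u_1\|_{L^p(\mu)}^p\lesssim \|u\|_{L^\infty(\Omega)}^p\,\mu(B)$, provided that $\alpha p>1$. Combining with the $u_2$-estimate gives the Carleson bound with a constant depending only on $C_p$, $\alpha$, and the AD-regularity constant, and Theorem \ref{teo1} then delivers the uniform rectifiability of $\partial\Omega$.

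The only genuine content beyond the $n\geq 2$ case is the decay estimate \rf{eqdjl125} for $n=1$, which is the step I expect to require the most care, since one must verify that the Kelvin transform argument and the CDC Hölder estimate (Lemma 4.5 and Corollary 4.6 of \cite{AMT}, used implicitly) both produce the same exponent $\alpha$ and that $\alpha$ depends only on the AD-regularity constant of $\partial\Omega$. Once this is in hand, the precise threshold $p>1/\alpha$ in the statement of the corollary is forced by the convergence condition on the tail integral and no additional argument is needed.
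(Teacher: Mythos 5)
Your proposal is correct and follows the paper's own reasoning, which the authors compress into the one-line remark preceding Corollary~\ref{coro2}: the proof of Corollary~\ref{coro1} is repeated verbatim, and the only place the hypothesis $n\geq2$ enters is the convergence of the tail integral in \eqref{eqfju44}, which for $n=1$ forces $p>1/\alpha$. You correctly identify the planar extension of the decay estimate \eqref{eqdjl125} as the single substantive step to re-examine, and your observation that the Kelvin transform in $\R^2$ (which has no $|x|^{n-1}$ prefactor, hence the exponent $n-1+\alpha$ reduces to $\alpha$) together with CDC boundary H\"older regularity supplies that estimate is exactly right.
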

\vvv

\vvv


\begin{thebibliography}{MMMTV2}


\bibitem[AiH]{AiHi} H.\ Aikawa and K.\ Hirata. {\em Doubling conditions for harmonic measure in John domains.} Ann. Inst. Fourier (Grenoble) 58 (2008), no. 2, 429--445.

\bibitem[AG]{AG}
 D.H. Armitage and S.J. Gardiner, \emph{Classical potential theory}, Springer
  Monographs in Mathematics, Springer-Verlag London, Ltd., London, 2001.

\bibitem[AHM$^3$TV]{AHM3TV}
J. Azzam, S. Hofmann, J.M. Martell, S. Mayboroda, M. Mourgoglou, X. Tolsa, and A. Volberg. {\em Rectifiability of harmonic measure.}  Geom. Funct. Anal. (GAFA), 26(3) (2016), 703-728. 

\bibitem[AHMNT]{AHMNT} J. Azzam, S. Hofmann, J.M. Martell and K. Nystr\"om and T. Toro,
{\em A new characterization of chord-arc domains.} To appear in J. Eur. Math. Soc.	arXiv:1406.2743.

\bibitem[AMT1]{AMT-porous} J. Azzam, M. Mourgoglou, and X. Tolsa. {\em Rectifiability of harmonic measure in domains with porous boundaries}. Preprint 2015.  arXiv:1505.06088

\bibitem[AMT2]{AMT} J. Azzam, M. Mourgoglou and X. Tolsa. {\em Mutual absolute continuity of interior and exterior harmonic measure implies
rectifiability.} Preprint (2015). To appear in Comm. Pure Appl. Math.

\bibitem[BH]{BH} S. Bortz and S. Hofmann.  {\em Harmonic measure and approximation of uniformly rectifiable sets.} To appear in Rev. Mat. Iberoam. 

\bibitem[Dah]{Dah} B. Dahlberg. {\em Approximation of harmonic functions.} Ann. Inst. Fourier (Grenoble) {30} (1980) no 1, 97--108. 


\bibitem[DJ]{DJ}
G.~David and D.~Jerison, \emph{Lipschitz approximation to hypersurfaces,  harmonic measure, and singular integrals}. Indiana Univ. Math. J. {39}  (1990), no.~3, 831--845. 

\bibitem[DS1]{DS1} G. David and S. Semmes, {\em Singular integrals and
rectifiable sets in $\R^n$: Beyond Lipschitz graphs,} Ast\'{e}risque
No. 193 (1991).

\bibitem[DS2]{DS2} G. David and S. Semmes, \emph{Analysis of and on uniformly
rectifiable sets}, Mathematical Surveys and Monographs, 38. American
Mathematical Society, Providence, RI, (1993).

\bibitem[Gar]{Garnett} J. Garnett. {\em Bounded Analytic Functions.} Academic Press, San Diego, 1981.


\bibitem[G-S]{G-S} D. Girela-Sarri\'on, \emph{Geometric conditions for the $L^2$-boundedness of singular integral operators with odd kernels with respect to measures with polynomial growth in $\R^d$.} Preprint arXiv:1505.07264 (2015).



\bibitem[He]{H}
L.L. Helms, \emph{Potential theory}, Universitext, Springer, London, 2014, 2nd
  ed.

\bibitem[HL]{Hofmann-Le} S. Hofmann and P. Le. {BMO solvability and absolute continuity of harmonic
measures.}  Preprint arXiv:1607.00418v1 (2016).  

\bibitem[HM1]{HM1} S. Hofmann and J.M. Martell, {\em Uniform Rectifiability and Harmonic Measure I: Uniform
rectifiability implies Poisson kernels in $L^p$},  Ann. Sci. \'Ecole Norm. Sup. 47 (2014), no. 3, 577--654.

\bibitem[HM2]{HM2} S. Hofmann and J.M. Martell. {\em Uniform rectifiability and harmonic measure, IV: Ahlfors regularity plus Poisson kernels in $L^p$ impies uniform rectifiability.}  
 arXiv:1505.06499.


\bibitem[HMM1]{HMM1} S. Hofmann, J.M. Martell and S. Mayboroda, {\em 
Uniform rectifiability and harmonic measure III:  Riesz transform bounds imply uniform rectifiability of boundaries of $1$-sided NTA domains.} Int. Math. Res. Not. (2014), no. 10, 2702-2729.

\bibitem[HMM2]{HMM2} S. Hofmann, J.M. Martell, and S. Mayboroda. \emph{Uniform rectifiability, Carleson measure estimates, and approximation of harmonic functions.} Duke Math. J. 165 (2016), no. 12, 2331--2389.


\bibitem[HMU]{HMU} S. Hofmann, J.M. Martell and I. Uriarte-Tuero. {\em Uniform rectifiability and harmonic measure, {II}: {P}oisson kernels in {$L^p$} imply uniform rectifiability.} Duke Math. J. (2014) no. 8, p. 1601--1654.

\bibitem[JK]{JK}
D.~S. Jerison and C.~E. Kenig, \emph{Boundary behavior of harmonic functions in
  nontangentially accessible domains}, Adv. in Math. {46} (1982), no.~1,
  80--147. 
  
\bibitem[KKiPT]{KKiPT} C. Kenig, B. Kirchheim, J. Pipher and T. Toro. {\em Square functions and the $A_\infty$
property of
elliptic measures.} J. Geom. Anal. (2016) 26:2383--2410. 

  
\bibitem[KKoPT]{KKoPT} C. Kenig, H. Koch, J. Pipher and T. Toro. {\em A new approach
to absolute continuity of elliptic measure, with applications
to non-symmetric equations.} Adv. Math. 153(2) (2000),
231--298.
  
\bibitem[Ma]{Mattila} P. Mattila. {\em Geometry of sets and measures in
Euclidean spaces,} Cambridge Stud. Adv. Math. 44, Cambridge Univ.
Press, Cambridge, 1995.

 \bibitem[MT]{MTo}
 M. Mourgoglou and X. Tolsa. {\em  Harmonic measure and Riesz transform in uniform and general domains.} 
 Preprint ArXiv:1509.08386 (2015).
 
\bibitem[Pi]{Pipher} J. Pipher. \emph {Carleson measures and elliptic boundary value problems.}
Proceedings, ICM (2014).

\bibitem[NToV1]{NToV} F. Nazarov, X. Tolsa and A. Volberg, \emph{On the uniform
  rectifiability of {AD}-regular measures with bounded {R}iesz transform
  operator: the case of codimension 1}, Acta Math. {213} (2014), no.~2,
  237--321. \MR{3286036}
  
\bibitem[NToV2]{NToV-pubmat} F. Nazarov, X. Tolsa and A. Volberg. {\em The Riesz transform Lipschitz harmonic functions.}
Publ. Mat. 58 (2014), 517--532.

\bibitem[NTrV]{NTV} {F. Nazarov, S. Treil and A. Volberg.}
{\em The Tb-theorem on non-homogeneous
spaces that proves a conjecture of Vitushkin.} CRM preprint No. 519 (2002), pp. 1--84.
arXiv:1401.2479 

\bibitem[Se]{Semmes} S. Semmes. {\emph Analysis vs.\ Geometry on a Class of
Rectifiable Hypersurfaces in $\R^n$.} Indiana Univ. Math. J., Vol. 39, no.4 (1990), 1005--1035.

\bibitem[To1]{Tolsa-Annals}
X. Tolsa. {Bilipschitz maps, analytic capacity, and the Cauchy integral.} Ann. of Math.162 (2005), no. 3, 124--1302.

\bibitem[To2]{Tolsa-pubmat}
X. Tolsa. {$L^2$-boundedness of the Cauchy transform implies $L^2$-boundedness of all Calder\'on-Zygmund operators associated to odd kernels.} Publ. Mat. 48 (2004), no. 2, 445--479.

\bibitem[To3]{Tolsa-llibre}
X.~Tolsa.
{\em Analytic capacity, the {C}auchy transform, and non-homogeneous
  {C}alder\'on-{Z}ygmund theory}, volume 307 of {\em Progress in Mathematics}.
 Birkh\"auser Verlag, Basel, 2014.
 
\bibitem[Va]{Var} N. Varopoulos. {\em A remark on functions of bounded mean oscillation and bounded harmonic
functions.} Pacific J. Math. 74 (1978), 257--259.

 \bibitem[Vo]{Volberg} A.\ Volberg, {\em Calder\'on-Zygmund capacities and operators on nonhomogeneous spaces.}
CBMS Regional Conf. Ser. in Math. 100, Amer. Math. Soc., Providence, 2003.

\end{thebibliography}
\end{document}